%

\input ./style/arxiv-general.cfg
\documentclass[aop,MSNbibl,seceqn,dvips]{arximspdf}
\makeatletter
   \@ifpackageloaded{graphicx}{}{\usepackage{graphicx}}
\makeatother


\doi{10.1214/14-AOP916} 
\volume{43}
\issue{4}
\pubyear{2015}
\firstpage{1663}
\lastpage{1711}

\makeatletter
\renewcommand{\epsilon}{\varepsilon}
\newcommand{\binom}[2]{\pmatrix{#1\cr #2}}

\newtheorem{theorem}{Theorem}
\newproclaim{remark}{Remark}[section]
\newtheorem{corollary}{Corollary}[section]
\newtheorem{proposition}{Proposition}[section]
\newtheorem{lemma}{Lemma}[section]

\renewcommand{\a}{\alpha}
\renewcommand{\b}{\beta}
\newcommand{\ga}{\gamma}
\newcommand{\La}{\Lambda}
\newcommand{\R}{{\mathbb{R}}}
\newcommand{\N}{{\mathbb{N}}}
\newcommand{\D}{{\mathcal{D}}}
\newcommand{\E}{{\mathcal{E}}}
\newcommand{\G}{{\mathcal{G}}}
\newcommand{\F}{{\mathcal{F}}}
\makeatother

\begin{document}
\begin{frontmatter}

\title{Spectral gap for stochastic energy exchange model with
nonuniformly positive rate function}
\runtitle{SG for stochastic energy exchange model}

\begin{aug}
\author[A]{\fnms{Makiko}~\snm{Sasada}\corref{}\thanksref{T1}\ead[label=e1]{sasada@math.keio.ac.jp}}
\runauthor{M. Sasada}
\affiliation{Keio University}
\address[A]{Department of Mathematics\\
Keio University\\
3-14-1, Hiyoshi, Kohoku-ku\\
Yokohama 223-8522\\
Japan\\
\printead{e1}} 
\end{aug}
\thankstext{T1}{Supported by JSPS Grant-in-Aid for Research Activity
Start-up Grant Number 23840036.}

\received{\smonth{7} \syear{2013}}
\revised{\smonth{1} \syear{2014}}

%
\begin{abstract}
We give a lower bound on the spectral gap for a class of stochastic
energy exchange models. In 2011, Grigo et al. introduced the model and
showed that, for a class of stochastic energy exchange models with a
uniformly positive rate function, the spectral gap of an $N$-component
system is bounded from below by a function of order $N^{-2}$. In this
paper, we consider the case where the rate function is not uniformly
positive. For this case, the spectral gap depends not only on $N$ but
also on the averaged energy $\mathcal{E}$, which is the conserved
quantity under the dynamics. Under some assumption, we obtain a lower
bound of the spectral gap which is of order $C(\mathcal{E})N^{-2}$
where $C(\mathcal{E})$ is a positive constant depending on $\mathcal
{E}$. As a corollary of the result, a lower bound of the spectral gap
for the mesoscopic energy exchange process of billiard lattice studied
by Gaspard and Gilbert
[\textit{J. Stat. Mech. Theory Exp.} \textbf{2008} (2008) p11021, \textit{J. Stat.
Mech. Theory Exp.} \textbf{2009} (2009) p08020]
and the stick process studied
by Feng et al.
[\textit{Stochastic Process. Appl.}
\textbf{66} (1997) 147--182]
are obtained.
\end{abstract}

%
\begin{keyword}[class=AMS]
\kwd[Primary ]{60K35}
\kwd[; secondary ]{82C31}
\end{keyword}
\begin{keyword}
\kwd{Spectral gap}
\kwd{energy exchange}
\kwd{nonuniformly positive rate function}
\kwd{locally confined hard balls}
\end{keyword}

\end{frontmatter}

\section{Introduction}
\label{intro}

\subsection{Background and model}

Recently, Grigo et al. introduced a class of stochastic energy exchange
models, which are pure jump Markov processes with a continuous state
space in \cite{GKS}. The model is a generalization of the mesoscopic
energy exchange process of billiard lattice studied in \cite{GG08} and
\cite{GG09} by Gaspard and Gilbert. Showing the hydrodynamic limit for
such a mesoscopic model of mechanical origin is a very important step
for a rigorous derivation of a diffusion equation or Fourier law from a
system which is purely deterministic.

One of the key estimates required for the hydrodynamical limit is a
sharp lower bound on the spectral gap of the finite coordinate process
(cf. \cite{KL}). What is needed is that the gap, for the process
confined to cubes of size $N$, shrinks at a rate $N^{-2}$. Up to
constants, this is heuristically the best possible lower bound. For a
wide class of interacting particle systems or diffusion processes, the
desired spectral gap estimates have been obtained (cf. \cite{KL,LSV}).
On the other hand, for pure jump processes with a continuous state
space, this type of estimate has been scarcely shown. To our knowledge,
only for the Kac walk and its generalizations (cf. \cite
{Ca08,CCL,GF}), the sharp estimate of the spectral gap have been shown
before the result \cite{GKS} by Grigo et al. for stochastic energy
exchange models. Our goal is, as a first step for proving the
hydrodynamic limit, to extend the result in \cite{GKS} to the class
including the mesoscopic energy process of the billiard lattice.

The dynamics of the stochastic energy exchange model introduced by
Grigo et al. is described as follows: For each integer $N \ge2$,
denote by $\Sigma_N$ the one-dimensional cube $\{1,2,\ldots,N\}$. A
configuration of the state space $\R_+^{\Sigma_N} := [0,\infty
)^{\Sigma_N}$ is denoted by $x$, so that $x_i$ indicates the energy at
site $i \in\Sigma_N$, which is a positive real number. Fix a
nonnegative continuous function $\La\dvtx\R_+^2 \to\R_+$, which is
called a rate function, and a continuous function $P \dvtx \R_+^2 \to
\mathcal{P}([0,1])$ where $\mathcal{P}([0,1])$ is the set of
probability measures on $[0,1]$. At each nearest neighbor pair of the
lattice $(i,i+1)$, energy exchange independently happens with rate $\La
(x_i,x_{i+1})$. When the energy exchange happens between the pair
$(i,i+1)$, a number $0 \le\a\le1$ is drawn, independently of
everything else, according to a distribution $P(x_i,x_{i+1},d\a)$ and
the energy at site $i$ becomes $\a(x_i + x_{i+1})$, the energy at site
$i+1$ becomes $(1-\a)(x_i +x_{i+1})$, and all other energies remain unchanged.


More precisely, we consider a continuous time Markov jump process
$x(t)$ on $\R^N_+$ by its infinitesimal generator $\mathcal{L}$,
acting on measurable bounded functions $f\dvtx \R^N_+ \to\R$ as
%
%
\begin{equation}
\label{eq:generator} \quad\mathcal{L} f(x)= \sum_{i=1}^{N-1}
\La(x_i,x_{i+1}) \int_0^1
P(x_i,x_{i+1},d\a)\bigl[f(T_{i,i+1,\a}x)-f(x)\bigr],
\end{equation}
where
\[
(T_{i,i+1,\a}x)_k= %
\cases{ x_k &\quad$
\mbox{if } k \neq i,i+1$,
\cr
\a(x_i+x_{i+1}) & \quad$
\mbox{if } k = i$,
\cr
(1-\a) (x_i+x_{i+1}) & \quad$\mbox{if
} k = i+1$. } %
\]

Obviously, the process preserves the total energy $\sum_{i=1}^N x_i$.
Therefore, for each $\E>0$, the set of configurations with mean energy
$\E$ per site
\[
\mathcal{S}_{\E,N}=\Biggl\{ x\in\R^N_+; \frac{1}{N}
\sum^N_{i=1} x_i= \E\Biggr\}
\]
is invariant for the process. Since $\mathcal{S}_{\E,N}$ is compact
and invariant, the assumed continuity of $\La$ and $P$ guarantees the
existence of at least one stationary distribution $\pi_{\E,N}$ for
$x(t)$ on each $\mathcal{S}_{\E,N}$. As mentioned, the scaling of the
rate of convergence toward the stationary distribution in terms of the
lattice size $N$ is of crucial importance in studying the hydrodynamic
limit of this model, especially if the system is of nongradient type.



Under certain conditions, Grigo et al. proved that the spectral gap of
the generator $\mathcal{L}$ on $\mathcal{S}_{\E,N}$ is of order
$N^{-2}$ uniformly in the mean energy $\E$ \cite{GKS}. Since their
proof used the weak convergence in Wasserstein distance, it applies to
very general rate functions $\La$ and transition kernels $P$. The
existence of a lower bound on the rate function $\La$ and the
reversibility of the process are keys of their assumptions. However, as
pointed out by themselves, since the mesoscopic energy exchange process
of billiard lattice does not satisfy the first assumption, it was
desirable to remove the assumption on the existence of a uniform lower
bound of the rate function. In this paper, we relax the assumption and
study the case where a rate function satisfies $\La(a,b) \ge C
(a+b)^m$ for some $C >0$ and $m \ge0$ intuitively. We give a precise
assumption later, which is satisfied by the mesoscopic energy exchange
process of billiard lattice.

To weaken the condition on the rate function $\La$, we need a stronger
condition on the reversible measure. Precisely, we assume that our
process is reversible with respect to a product Gamma-distribution.
This condition is satisfied for general mechanical models, and hence
mesoscopic energy exchange processes of mechanical origin, such as the
mesoscopic energy exchange process of billiard lattice. See also
Remark~\ref{rem:mechanicalform} below to understand why this condition is
natural from a physical point of view.

\subsection{Notation and main result}

For each $\ga>0$, let $\nu_{\ga}$ denote a Gamma distribution on $\R
_+$ with a scale parameter $1$ and a shape parameter $\ga$, that is,
\[
\nu^{\ga}(dx)=x^{\ga-1}\frac{e^{-x}}{\Gamma(\ga)}\,dx.
\]
Let $\nu^{\ga,N}$ denote the product measure of $\nu^{\ga}$ on $\R
_+^N$ and $\nu^{\ga}_{\E,N}:= \nu^{\ga,N}|_{\mathcal{S}_{\E,N}}$
denote the conditional probability measure of $\nu^{\ga,N}$ on
$\mathcal{S}_{\E,N}$. From now on, we fix an arbitrary $\gamma>0 $
and assume that $\nu_{\E,N}^{\ga}$ is a reversible measure for
$\mathcal{L}$. We also denote $\nu^{\ga}_{\E,N}$ by $\nu_{\E,N}$
when there is no confusion.

Denote by $L^2(\nu_{\E,N})$ the Hilbert space of functions $f$ on
$\mathcal{S}_{\E,N}$ such that $ E_{\nu_{\E,N}}[f^2] < \infty$.
Then the associated Dirichlet form is given by
\begin{eqnarray*}
\D(f) & =& \D_{\E,N}(f) := \int_{\mathcal{S}_{\E,N}}
\nu_{\E
,N}(dx) [- \mathcal{L}f](x) f(x)
\\
& =& \frac{1}{2} \sum_{i=1}^{N-1} \int
_{\mathcal{S}_{\E,N}} \nu_{\E,N}(dx) \La(x_i,x_{i+1})
\\
&&\hspace*{25pt}{}\times\int_0^1 P(x_i,x_{i+1},d
\a) \bigl[f(T_{i,i+1,\a}x)-f(x)\bigr]^2
\end{eqnarray*}
for all $f \in L^2( \nu_{\E,N})$.


We define
%
%
\begin{equation}
\lambda(\E,N):=\inf\biggl\{ \frac{ \D_{\E,N}(f)}{E_{ \nu_{\E
,N}}[f^2]} \Big| E_{ \nu_{\E,N}}[f]=0, f \in
L^2( \nu_{\E,N}) \biggr\}
\end{equation}
and call $\lambda(\E,N)$ the spectral gap of $- \mathcal{L}$ on
$\mathcal{S}_{\E,N}$ in $L^2 (\nu_{\E,N})$.


%
\begin{theorem}\label{thm:main}
Assume that there exist a positive constant $\tilde{C} >0$ and a real
number $m \ge0$ such that the following holds:
%
%
\begin{equation}
\label{assmeq:2particles} \lambda(\E, 2) \ge\tilde{C} \E^m \qquad
\mbox{for all }
\E> 0.
\end{equation}
Then there exists a positive constant $C >0$ depending only on $m$ and
$\ga$ such that
%
%
\begin{equation}
\label{eq:mainresult} \lambda(\E,N) \ge C\frac{\tilde{C} \E^m}{N^2}
\end{equation}
for all $\E>0$ and $N \ge2$.
\end{theorem}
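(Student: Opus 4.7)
The plan is to prove the bound by induction on $N$, using a Lu--Yau type variance decomposition together with a careful tracking of the energy-dependent factor $\E^m$. The base case $N=2$ is the hypothesis \eqref{assmeq:2particles}. For the inductive step, I would assume $\lambda(\E',N-1) \ge C\tilde C (\E')^m/(N-1)^2$ for all $\E'>0$ and prove $\lambda(\E,N) \ge C\tilde C \E^m/N^2$.

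For the inductive step I would condition on the rightmost coordinate $x_N$ and split
\[
\mathrm{Var}_{\nu_{\E,N}}(f) \;=\; E_{\nu_{\E,N}}\!\bigl[\mathrm{Var}(f\mid x_N)\bigr] \;+\; \mathrm{Var}_{\nu_{\E,N}}\!\bigl(E[f\mid x_N]\bigr).
\]
The conditional distribution of $(x_1,\dots,x_{N-1})$ given $x_N=y$ is exactly $\nu^{\ga}_{\E',N-1}$ with $(N-1)\E'=N\E-y$, so by the inductive hypothesis the first piece is bounded by
\[
E_{\nu_{\E,N}}\!\Bigl[\tfrac{(N-1)^2}{C\tilde C(\E')^m}\,\D_{\E',N-1}(f\mid x_N)\Bigr].
\]
The expectation of $\D_{\E',N-1}(f\mid x_N)$ against the law of $x_N$ is precisely $\sum_{i=1}^{N-2}$ of the local pieces of $\D_{\E,N}(f)$, so this term is controlled by $\D_{\E,N}(f)$ provided the weight $(\E')^{-m}$ can be replaced by a constant multiple of $\E^{-m}$ on average. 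For the second piece I would apply the two-site gap to the pair $(N-1,N)$: conditioning on $(x_1,\dots,x_{N-2})$, the pair $(x_{N-1},x_N)$ has law $\nu^{\ga}_{s/2,2}$ with $s=x_{N-1}+x_N$, so the two-site hypothesis bounds $\mathrm{Var}\bigl(E[f\mid x_N]\mid x_1,\dots,x_{N-2}\bigr)$ by $(\tilde C(s/2)^m)^{-1}$ times the local Dirichlet form at $(N-1,N)$.

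Combining these estimates and unwinding the induction, I would obtain a recursion $A(N) \le A(N-1) + \tfrac{c}{\tilde C \E^m}$, which telescopes to $A(N) \le c' N^2/(\tilde C \E^m)$ and yields \eqref{eq:mainresult}. The main obstacle is the random weight $(\E')^{-m} = \bigl((N\E-x_N)/(N-1)\bigr)^{-m}$ appearing in the first piece: when $x_N$ is close to $N\E$, this weight blows up and one cannot simply pull out $\E^{-m}$ pointwise. The key technical input is a concentration estimate for the Gamma--Dirichlet structure: conditional on $\sum x_i=N\E$, the ratio $x_N/(N\E)$ is $\mathrm{Beta}(\ga,(N-1)\ga)$-distributed, hence $E\bigl[(N\E-x_N)^{-m}\bigr]$ is finite once $(N-1)\ga>m$ and scales exactly as $(N\E)^{-m}$. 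This is the mechanism by which the $\E^m$ prefactor is preserved along the induction, and matching this scaling against the local Dirichlet forms uniformly in $N$ is the most delicate part of the argument; the cases $m=0$ (recovering \cite{GKS}) and small $N$ (where $(N-1)\ga \le m$) need to be handled as boundary cases.
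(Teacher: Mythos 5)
Your induction is the Lu--Yau/martingale route, which is genuinely different from the paper (the paper reduces $\mathcal{L}$ to the special process $\mathcal{L}^*$ via the two-site hypothesis, compares $\mathcal{L}^*$ with its long-range version through a moving-particle lemma that costs the factor $N^{-2}$ and the three-site gap $\kappa_m$, and then proves a uniform gap for the long-range model). As written, however, your sketch has gaps at exactly the points where the work lies. First, the second piece of your decomposition is mishandled: $E[f\mid x_N]$ is a function of $x_N$, and conditioning on $(x_1,\dots,x_{N-2})$ only gives, by the law of total variance,
\begin{equation*}
\mathrm{Var}_{\nu_{\E,N}}\bigl(E[f\mid x_N]\bigr)
= E\Bigl[\mathrm{Var}\bigl(E[f\mid x_N]\,\big|\,x_1,\dots,x_{N-2}\bigr)\Bigr]
+ \mathrm{Var}\Bigl(E\bigl[E[f\mid x_N]\,\big|\,x_1,\dots,x_{N-2}\bigr]\Bigr),
\end{equation*}
and your two-site bound only touches the first summand; the second summand is not controlled by the bond $(N-1,N)$ at all. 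Controlling it requires transporting energy fluctuations of $x_N$ across the whole chain (equivalence-of-ensembles or moving-particle type estimates), and this is precisely where the order-$N$ cost per induction step comes from. This also shows your bookkeeping cannot be right: a recursion $A(N)\le A(N-1)+c/(\tilde C\E^m)$ telescopes to $A(N)\lesssim N$, i.e.\ a gap of order $N^{-1}$, which is the mean-field rate and is false for nearest-neighbor dynamics; a correct nearest-neighbor argument must produce $A(N)\le A(N-1)+cN/(\tilde C\E^m)$ or equivalent.

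Second, the treatment of the random weight $(\E')^{-m}=\bigl((N\E-x_N)/(N-1)\bigr)^{-m}$ is not justified. The inductive bound gives you $E\bigl[(\E')^{-m}\,\D_{\E',N-1}(f\mid x_N)\bigr]$ up to constants, i.e.\ an expectation of a \emph{product} of two correlated random quantities; knowing that $E[(N\E-x_N)^{-m}]$ is finite and scales like $(N\E)^{-m}$ (a Beta-moment computation) does not let you replace the weight by its mean, and on the event $x_N\approx N\E$ where the weight blows up there is no compensating smallness of the conditional Dirichlet form. Moreover the moment condition $(N-1)\ga>m$ genuinely fails for small $N$ when $m$ is large, so the first steps of the induction are not covered; the paper's assumptions impose no such restriction, and its proof (Proposition \ref{prop:general}, Lemma \ref{lem:scaling}, Theorem \ref{thm:compare}, Theorems \ref{thm:0} and \ref{thm:convex}, Proposition \ref{prop:compm2m} with the appendix bound $\tilde\kappa_1>\frac13$) avoids conditioning on a single site precisely to sidestep these issues. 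To salvage your approach you would need a pointwise (not averaged) mechanism for absorbing $(\E')^{-m}$ into $\E^{-m}$ together with a genuine two-block/moving-particle estimate for the conditional-expectation term; neither is routine.
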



%
\begin{remark}
For simplicity, we state the result in one-dimensional setting.
However, since our proof relies on the spectral gap estimate for the
long-range model and the kind of ``moving particle lemma'' in the
continuous state space, the result is extended to the case in any
dimension immediately, with a positive constant $C>0$ depending on $m$,
$\ga$ and $d$ the dimension of the lattice. We refer Section~2 of
\cite{M} for the detail of this argument. This is one of the advantage
of our proof compared to the preceding study.
\end{remark}


Grigo et al. call $(\La, P)$ is of mechanical form if the rate
function $\Lambda$ and the transition kernel $P$ are of the form
%
%
\begin{equation}
\label{eq:productform} 
\quad\Lambda(a,b) =\Lambda_s(a+ b)
\Lambda_r \biggl(\frac{a}{a+b} \biggr),\qquad P(a,b,d\a) =P
\biggl(\frac{a}{a+b},d\a\biggr) 
\end{equation}
Carlen
and studied the processes of this form in detail in \cite{GKS},
Section~4. The form naturally occurs in models originating from mechanical
systems. Actually, the rate function and the probability kernel of the
mesoscopic energy exchange models of billiard lattice satisfies (\ref
{eq:productform}) and $\Lambda_s(s)=\sqrt{s}$ while $\Lambda_r$ is a
uniformly positive continuous function on $[0,1]$. See the explicit
expression in Section~\ref{sec:example}.

%
\begin{remark}\label{rem:mechanicalform}
One of the splendid results of \cite{GKS} is that if a stochastic
energy exchange model of mechanical form admits a reversible product
distribution, then this measure must necessarily be a product
Gamma-distributions (or a single atom). This is the reason why we
concentrate to study the process reversible with respect to a product
measure whose marginal is a Gamma distribution.
\end{remark}

If the process is of mechanical form, then by the definition, $\lambda
(\E,2)=\La_s(2\E)\tilde{C}$ holds where
\[
\tilde{C} = \inf\biggl\{ \frac{ \int_0^1 \mu(d\b) \La_r (\b)
\int_0^1 P(\b,d\a) [f(\a)-f(\b)]^2}{2 E_{\mu} [f^2]} \Big|
E_{\mu}[f]=0, f \in
L^2(\mu) \biggr\}
\]
and $\mu= \mu^{\ga}$ is the beta distribution on $[0,1]$ with
parameters $(\ga, \ga)$. Therefore, if the above $\tilde{C}$ is
strictly positive and $\La_s (s) \ge s^m$ for some $m \ge0$, then
(\ref{assmeq:2particles}) is satisfied.
Moreover, if ($\Lambda$, $P$) is of mechanical form and $\La
_s(s)=s^m$ for some $m \ge0$, then $\lambda(\E,N) = \E^m \lambda
(1,N)$ holds for all $\E>0$ and $N \in\N$ (cf. Lemma~\ref
{lem:scaling}). Namely, we cannot expect an order $N^{-2}$ bound of the
spectral gap to hold uniformly in $\E$. Then it is natural to ask
whether such a bound holds if we incorporate the extra factor $\E^m$,
and Theorem~\ref{thm:main} shows that this is indeed the case.

%
\begin{corollary}\label{cor:mechanicalcase}
Assume that $(\Lambda,P)$ is of mechanical form and $\La_s(s)=s^m$
for some $m \ge0$. Then, if
\begin{eqnarray*}
&&\inf\biggl\{ \frac{ \int_0^1 \mu^{\ga}(d\b) \La_r (\b) \int_0^1
P(\b,d\a) [f(\a)-f(\b)]^2}{ E_{\mu^{\ga} } [f^2]} \Big|
\\
&&\hspace*{114pt}E_{\mu
^{\ga}}[f]=0, f \in L^2
\bigl(\mu^{\ga}\bigr) \biggr\} >0
\end{eqnarray*}
holds, there exists a positive constant $C$ independent of $\E$ and
$N$ such that
\[
\lambda(\E,N) \ge C \frac{\E^m}{N^2}
\]
for all $\E>0$ and $N \ge2$.
\end{corollary}

%
\begin{remark}
To give an upper bound of the spectral gap $\lambda(\E,N)$, consider
the function $f_{\E,N}(x)=\sum_{i=1}^N i (x_i-\E)$. Then we have
\begin{eqnarray*}
&&\hspace*{-5pt}\D_{\E,N} (f_{\E,N})
\\
&&\hspace*{-5pt}\qquad= \frac{1}{2} \sum_{i=1}^{N-1}
\int_{\mathcal{S}_{\E,N}} \nu_{\E
,N}(dx) \La(x_i,x_{i+1})
\int_0^1 P(x_i,x_{i+1},d
\a) \bigl[(1-\a)x_i-\a x_{i+1}\bigr]^2
\\
&&\hspace*{-5pt}\qquad=\frac{N-1}{2} \int_{\mathcal{S}_{\E,N}}
\nu_{\E,N}(dx)
\La(x_1,x_2) \int_0^1
P(x_1,x_2,d\a) \bigl[(1-\a)x_1-\a
x_{2}\bigr]^2
\\
&&\hspace*{-5pt}\qquad\le(N-1) E_{ \nu_{\E,N}} \bigl[ \La(x_1,x_2)
\bigl(x_1^2+x_{2}^2\bigr)\bigr].
\end{eqnarray*}
On the other hand, $E_{ \nu_{\E,N}}[f_{\E,N}^2]=\sum_{i,j=1}^N ij
E_{ \nu_{\E,N}}[(x_i-\E)(x_j-\E)]$ and since $E_{ \nu_{\E
,N}}[(x_i-\E)(x_j-\E)] = -\frac{1}{N-1} E_{\nu_{\E,N}}[(x_i-\E
)^2]$ for $i \neq j$,
\[
E_{ \nu_{\E,N}}\bigl[f_{\E,N}^2\bigr] = \frac{N^2(N+1)}{12}
E_{ \nu_{\E
,N}}\bigl[(x_1-\E)^2\bigr].
\]
Then, by the equivalence of
ensembles, $E_{ \nu_{\E,N}} [ \La
(x_1,x_2) (x_1^2+x_{2}^2)]$ and\break $E_{ \nu_{\E,N}}[(x_1-\E)^2]$
converge to $E_{\nu_{\E}^2}[ \La(x_1,x_2) (x_1^2+x_{2}^2)]$ and $
E_{ \nu_{\E}^2}[(x_1-\E)^2]$ respectively as $N \to\infty$ where
$\nu_{\E}$ is the Gamma distribution with a scale parameter $\E$ and
a shape parameter $\ga$. Therefore, if $E_{\nu_{\E}^2}[ \La
(x_1,x_2) (x_1^2+x_{2}^2)] < \infty$, then there exist positive
constants $A(\E)$ and $B(\E)$ such that $\D_{\E,N} (f_{\E,N}) \le
N A(\E)$ and $E_{ \nu_{\E,N}}[f_{\E,N}^2] \ge N^3 B(\E)$, namely
$\lambda(\E,N) \le\frac{A(\E)}{N^2 B(\E)}$ for all $N \ge2$.
Therefore, our lower bound (\ref{eq:mainresult}) is optimal up to
constant depending on $\E$.

In particular, if $\La(x_1,x_2) \le C(x_1+x_2)^m$ for some $C >0$ and
$m \ge0$, then $\lambda(\E,N) \le\frac{C'\E^m}{N^2}$ with some
positive constant $C'$. Therefore, for such rate functions, our lower
bound (\ref{eq:mainresult}) is optimal up to constant which is
independent of $\E$ and $N$.
\end{remark}

The lack of a uniform lower bound complicates the rigorous analysis of
the rate of convergence to equilibrium. A similar problem was found in
the zero-range process with constant rate, and it had been an open
problem for decades. In 2005, Morris \cite{M} showed that the
spectral gap of that model is of order $(1+\rho)^{-2}N^{-2}$ where
$\rho$ is the density of particles and $N$ is the size of the system.
In the context of exclusion processes, it has been known that if the
jump rates are degenerate, the spectral gap does not have uniform lower
bound of order $N^{-2}$, and instead has a lower bound of order $C(\rho
) N^{-2}$ where $C(\rho)$ is a positive constant depending on $\rho$,
the density of particles (cf. \cite{GLT,NS}). Recently, the spectral
gap for the Kac model with hard sphere collisions is studied by Carlen
et al. in \cite{CCL2}. By projecting their model to the energy
coordinates, we obtain the process called $\mathcal{L}_{\mathrm
{LR}}^*$ in our
paper with parameters $\gamma= \frac{1}{2}$ and $0 \le m \le1$
($\gamma$ in \cite{CCL2} corresponds to $m$ in our paper). $\mathcal
{L}_{\mathrm{LR}}^*$ is a long-range (or equivalently, mean field)
version of
our process with specific $(\La,P)$, which we will denote by $(\La
^*,P^*)$. However, since we study the spectral gap in the whole
$L^2$-space, we cannot apply directly their result, which is for the
spectral gap in the symmetric sector. A classical spectral gap for the
gradient operator of the product Gamma-distribution was studied by
Barthe and Wolff in \cite{BW}, and they showed that it is of order $\E
^{-2}$ for $\ga\ge1$.

We also remark that the hydrodynamic limit for a special class of our
processes, which are gradient, was studied by Feng et al. in \cite{FIiiiS}. The process is called the stick process and of mechanical form
with $\La_s(s)=s^m$ where $m>0$ is a fixed model parameter. We show
that we can apply the main result of this paper to this class of models
in Section~\ref{sec:example}. As the hydrodynamic equation of the
stick process, the porous medium equation
\[
\partial_t \E(t,u) = \mathrm{const.} \,\partial_u\bigl(
\E(t,u)^{m} \partial_u \E(t,u)\bigr)
\]
was derived.
Gaspard and Gilbert conjectured that the hydrodynamic equation of the
mesoscopic energy exchange models of billiard lattice is also the
porous medium equation with $m=\frac{1}{2}$. By the scaling property
of the generator and the reversible measure, the same equation should
be derived from the stochastic energy exchange models of mechanical
form with $\La_s(s)=s^m$ (under the condition that the process is
reversible with respect to a product Gamma-distribution). The same
equation is derived also from an exclusion process with degenerate jump
rates in \cite{GLT}.

The rest of the article is organized as follows: In Section~\ref
{sec2}, we give
a detailed account of the proof of Theorem~\ref{thm:main}. Precisely,
we reduce the spectral gap estimate of the original process to that of
a long-range version of our process with specific $(\La,P)$, which we
will denote by $(\La^*,P^*)$. In Section~\ref{sec3}, we justify this
reduction,
and in Section~\ref{sec:long-range} we give an estimate for this
specific model. In
Section~\ref{sec:example}, we show that we can apply our result to the
mesoscopic energy exchange models of billiard lattice and the stick
process. In the \hyperref[app]{Appendix}, we give a sharp estimate of
the spectral gap
of the specific model with $N=3$. This sharp estimate is the key of our proof.

\section{Proof of Theorem \texorpdfstring{\protect\ref{thm:main}}{1}}\label{sec2}

Our basic idea of the proof is to introduce a few suitably chosen
reference processes and compare the Dirichlet forms associated with
them and that with the original process. First, we introduce a special
process given by a generator $\mathcal{L}^*$ with
%
%
\begin{eqnarray}
\label{eq:*LP} \Lambda^{*}(x_i,x_{i+1})&=&(x_i+x_{i+1})^m,
\nonumber
\\[-8pt]
\\[-8pt]
P^{*}(x_i,x_{i+1}, d\a)&=&
\frac{ \{ \a(1-\a)\}^{\ga-1}}{B(\ga,\ga)} \,d\a=\mu^{\ga}(d\a),
\nonumber
\end{eqnarray}
where $B(\ga,\ga)=\int_0^1\{ \a(1-\a)\}^{\ga-1} \,d\a$ is the
normalizing factor and $m \ge0$ and $\ga>0$ are the constants given
in the assumption.

We can rewrite the generator $\mathcal{L}^{*} (=\mathcal{L}^{*,m,\ga
}) $ given by (\ref{eq:*LP}) as
\[
\mathcal{L}^* f (x)= \sum_{i=1}^{N-1}
(x_i+x_{i+1})^m \bigl\{ E_{i,i+1}f (x)
-f(x)\bigr\}=\sum_{i=1}^{N-1}
(x_i+x_{i+1})^m D_{i,i+1}f (x),
\]
where $E_{i,j}f=E_{ \nu^{\ga}_{\E,N}}[f | \F_{i,j}]$,
$D_{i,j}f=E_{i,j}f-f$ and $\F_{i,j}$ is the $\sigma$-algebra
generated by variables $\{x_k \}_{k \neq i,j}$. Here, we follow the
notation used in \cite{Ca08} (see also \cite{S}). Note that $x_i
+x_j$ is measurable with respect to $\F_{i,j}$.
Using the above expression, we can easily check that $ \nu_{\E
,N}^{\ga}=\nu_{\E,N}$ is a reversible measure for the process. The
associated Dirichlet form is given by
\begin{eqnarray*}
\D^{*}(f)&=&\D^{*,m}_{\E,N}(f) := \int
\nu_{\E,N}(dx) \bigl[- \mathcal{L}^*f\bigr](x) f(x)
\\
&=& \sum_{i=1}^{N-1} E_{\nu_{\E,N}}
\bigl[(x_i+x_{i+1})^m ( E_{i,i+1}f -f )
^2 \bigr]
\\
&=& \sum_{i=1}^{N-1}
E_{\nu_{\E,N}} \bigl[(x_i+x_{i+1})^m (
D_{i,i+1}f ) ^2 \bigr]
\end{eqnarray*}
for all $f \in L^2( \nu_{\E,N})$. We use notation $\D^{*}$ or $\D
^{*,m}$ when there is no confusion.

We denote the spectral gap of $\mathcal{L}^{*}$ in $L^2( \nu^{\ga
}_{\E,N})$ by $\lambda^{*,m}(\E,N)$. Here, we abbreviate $\ga$.
Note that $\lambda^{*,m}(\E,2)=2^m\E^m$ since $E_{\nu_{\E,2}}
[(x_1+x_2)^m ( D_{1,2}f ) ^2 ]=(2\E)^m E_{\nu_{\E,2}} [ ( D_{1,2}f )
^2 ]=(2\E)^m \operatorname{Var}[f^2]$. Namely, this special model satisfies
assumption (\ref{assmeq:2particles}) of Theorem~\ref{thm:main}.
Moreover, the model is of mechanical form with $\La_s(s)=s^m$, $\La
_r(\b)=1$ and $P(\b,d\a)=\mu^{\ga}(d\a)$.

%
\begin{remark}
The model defined by the generator $\mathcal{L}^{*,0,1}$ (namely, the
above process with parameters $m=0$ and $\ga=1$) was studied by Kipnis
et al. in \cite{KMP} as an exactly solvable model which describes the
heat flow.
\end{remark}

We consider this special model because of the following guess: Under
assumption (\ref{assmeq:2particles}), the state of each pair of sites
achieves the equilibrium (with respect to the state of this pair of
sites) at least with the rate proportional to the $m$th power of the
sum of their energies under the dynamics given by $\mathcal{L}$.
Namely, the spectral gap of $\mathcal{L}$ can be bounded from below up
to constant by that of the process where the state of any pair of sites
achieves the equilibrium exactly with the rate proportional to the
$m$th power of the sum of their energies.
The next proposition shows that the guess truly holds.

%
\begin{proposition}\label{prop:general}
Under the assumption of Theorem~\ref{thm:main}, for any $\E> 0$ and
$N \ge2$,
%
%
\begin{equation}
\lambda(\E,N) \ge\frac{\tilde{C}}{2^m} \lambda^{*,m}(\E,N)
\end{equation}
holds.
\end{proposition}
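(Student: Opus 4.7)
The plan is to establish the stronger bond-wise Dirichlet form comparison
\begin{equation*}
\D(f) \ge \frac{\tilde{C}}{2^m}\D^{*,m}(f) \qquad \text{for every } f \in L^2(\nu_{\E,N}),
\end{equation*}
from which the proposition follows immediately by the variational characterization: since both $\mathcal{L}$ and $\mathcal{L}^*$ are reversible with respect to the same measure $\nu_{\E,N}$, dividing by $E_{\nu_{\E,N}}[f^2]$ for mean-zero $f$ and taking the infimum gives $\lambda(\E,N) \ge (\tilde{C}/2^m)\lambda^{*,m}(\E,N)$.

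I would carry out the comparison summand by summand. Fix a bond $(i,i+1)$ and condition the corresponding term of $\D(f)$ on $\F_{i,i+1}$. Writing $g(a,b)$ for the function $f$ with the coordinates $x_k$, $k\neq i,i+1$, frozen, and setting $s := x_i+x_{i+1}$, the inner conditional expectation is literally the two-site Dirichlet form $\D_{s/2,2}(g)$ of $\mathcal{L}$ on $\mathcal{S}_{s/2,2}$. The essential structural input here is that the conditional distribution of $(x_i,x_{i+1})$ under $\nu^{\gamma}_{\E,N}$ given $\F_{i,i+1}$ is precisely $\nu^{\gamma}_{s/2,2}$: the density of $\nu^{\gamma,N}$ restricted to the hyperplane $\{x_i+x_{i+1}=s\}$ is proportional to $x_i^{\gamma-1}x_{i+1}^{\gamma-1}$, which is exactly the conditioned product-Gamma density for a two-site chain with mean energy $s/2$. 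Applying assumption \eqref{assmeq:2particles} at energy $s/2$ then yields
\begin{equation*}
\D_{s/2,2}(g) \ge \tilde{C}(s/2)^m \mathrm{Var}_{\nu^{\gamma}_{s/2,2}}(g) = \frac{\tilde{C}}{2^m}(x_i+x_{i+1})^m E\bigl[(D_{i,i+1}f)^2 \bigm| \F_{i,i+1}\bigr],
\end{equation*}
where I used $\mathrm{Var}(f\mid\F_{i,i+1}) = E[(f-E_{i,i+1}f)^2\mid\F_{i,i+1}] = E[(D_{i,i+1}f)^2\mid\F_{i,i+1}]$. Taking the full expectation, pulling out the $\F_{i,i+1}$-measurable factor $(x_i+x_{i+1})^m$ via the tower property, and summing over $i=1,\dots,N-1$ produces the bond-wise comparison and hence the form inequality.

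The only real ingredient beyond routine bookkeeping is the conditional-distribution identity on the hyperplane, and I would not expect it to be a true obstacle, only the pivotal step. It is precisely the reason why the product-Gamma hypothesis is imposed in the first place: for a generic reversible product measure, the conditioned marginal of $(x_i,x_{i+1})$ on the slice $\{x_i+x_{i+1}=s\}$ would not coincide with the reversible measure of the two-site subsystem, and the clean reduction to the hypothesis \eqref{assmeq:2particles} would fail. Everything else—reversibility, the monotonicity of infima over Dirichlet forms sharing a common reference measure, and the variational principle—is standard.
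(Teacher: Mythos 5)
Your proposal is correct and is essentially the paper's own argument: the same bond-wise decomposition of $\D(f)$, conditioning on $\F_{i,i+1}$ and identifying the conditional law of the pair as $\nu^{\gamma}_{(x_i+x_{i+1})/2,2}$ (the non-interference property), then applying the two-site assumption \eqref{assmeq:2particles} at energy $(x_i+x_{i+1})/2$ and summing over bonds before invoking the variational characterization. No substantive difference from the paper's proof.
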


\begin{pf}
Define an operator $\mathcal{L}_0$ on $L^2(\nu_{\E,2})$ acting on
$f$ as
\[
\mathcal{L}_0 f (z_1,z_2)=
\La(z_1,z_2) \int P(z_1,z_2,d\a)
\bigl[f(T_{1,2,\a}z)-f(z)\bigr],
\]
where $z =(z_1,z_2) \in\R_+^2$. For $N \ge3$, $x \in\R_+^N$, $1
\le i <j \le N$ and $f\dvtx \R_+^N \to\R$, define $f^{i,j}_{x}
\dvtx \R_+^2
\to\R$ as
\[
f^{i,j}_{x}(p,q)=f(x_1,x_2, \ldots,
x_{i-1}, p, x_{i+1}, \ldots, x_{j-1}, q,
x_{j+1}, \ldots, x_N).
\]
Note that the function $f^{i,j}_x$ does not depend on $x_i$ nor $x_j$.
Then we can rewrite our generator as follows:
\[
\mathcal{L}f(x)=\sum_{i=1}^{N-1}
\mathcal{L}_{i,i+1} f(x),
\]
where $\mathcal{L}_{i,i+1} f (x) = (\mathcal{L}_0 f^{i,i+1}_x )
(x_i,x_{i+1})$. Then we have
\begin{eqnarray*}
&&E_{ \nu_{\E,N}}\bigl[f (-\mathcal{L}_{i,i+1}) f|
\F_{i,i+1}\bigr]
\\
&&\qquad= E_{ \nu
_{\E,N}}\bigl[f^{i,i+1}_x
(x_i, x_{i+1}) \bigl( \bigl(-\mathcal{L}_0
f^{i,i+1}_x\bigr) (x_i, x_{i+1})
\bigr)| \F_{i,i+1}\bigr]
\\
&&\qquad= E_{\nu_{{(x_i+x_{i+1})}/{2},2}}\bigl[f^{i,i+1}_x (-
\mathcal{L}_0 )f^{i,i+1}_x\bigr].
\end{eqnarray*}
Here, we use the \textit{noninterference} property of $ \nu_{\E,N}$,
which is mentioned in \cite{Ca08}. Namely, the conditional
distribution with respect to the pair $(x_i,x_{i+1})$ of $\nu_{\E,N}$
on the configuration space with a fixed value $x_i+x_{i+1}$ is $\nu
_{{(x_i+x_{i+1})}/{2},2}$ for any $\E>0$ and $N \ge2$.

Now, since $\mathcal{L}=\mathcal{L}_0$ for $N=2$, by the definition
of the spectral gap, we have for any $\E> 0$ and $g \in L^2(\nu_{\E,2})$,
\[
\lambda(\E,2) E_{\nu_{\E, 2}}\bigl[ \bigl( g - E_{\nu_{\E, 2}}[g]
\bigr) ^2 \bigr] \le E_{\nu_{\E, 2}}\bigl[ g (-\mathcal{L}_0)
g \bigr].
\]
On the other hand, since $E_{i,i+1}$ is the integral operator with
respect to $\nu_{{(x_i+x_{i+1})}/{2},2}$ we have
\[
E_{\nu_{{(x_i+x_{i+1})}/{2},2}}\bigl[ \bigl( f^{i,i+1}_x -
E_{\nu
_{{(x_i+x_{i+1})}/{2},2}}\bigl[f^{i,i+1}_x \bigr] \bigr)
^2 \bigr]= E_{ \nu_{\E
,N}}\bigl[(D_{i,i+1}f)^2 |
\F_{i,i+1}\bigr].
\]
Combining the above equations, we have for any $\E>0$ and $N \ge2$
\[
\lambda\biggl(\frac{x_i+x_{i+1}}{2},2 \biggr) E_{ \nu_{\E
,N}}
\bigl[(D_{i,i+1}f)^2 |\F_{i,i+1}\bigr] \le
E_{ \nu_{\E,N}}\bigl[f (-\mathcal{L}_{i,i+1}) f| \F_{i,i+1}
\bigr].
\]
Then, by assumption (\ref{assmeq:2particles}),
we have
\begin{eqnarray*}
&& \frac{\tilde{C}}{2^m}E_{ \nu_{\E,N}}\bigl[(x_i+x_{i+1})^m
( D_{i,i+1}f ) ^2 \bigr]
\\
&&\qquad= E_{ \nu_{\E,N}}\biggl[ \tilde{C} \biggl( \frac{x_i+x_{i+1}}{2}
\biggr)^m E_{ \nu_{\E,N}}\bigl[(D_{i,i+1}f)^2 |
\F_{i,i+1} \bigr] \biggr]
\\
&&\qquad\le E_{ \nu_{\E,N}}\biggl[ \lambda\biggl(\frac
{x_i+x_{i+1}}{2},2 \biggr)
E_{ \nu_{\E,N}}\bigl[(D_{i,i+1}f)^2 |\F_{i,i+1}\bigr]
\biggr]
\\
&&\qquad\le E_{ \nu_{\E,N}}\bigl[ f (-\mathcal{L}_{i,i+1}) f \bigr].
\end{eqnarray*}
Finally, by summing up the terms for $1 \le i \le N-1$, we have $\frac
{\tilde{C}}{2^m} \D^{*}(f) \le\mathcal{D}(f)$ and complete the proof.
\end{pf}

%
\begin{remark}
The similar idea to the proof of Proposition~\ref{prop:general} was
already used in \cite{OS,S}.
\end{remark}

Hereafter, we only work on the process $\mathcal{L}^*$. The following
scaling relation is simple but the key of the rest of the paper.

%
\begin{lemma}\label{lem:scaling}
For any $\E>0$ and $N \ge2$,
%
%
\begin{equation}
\label{eq:scaling} \lambda^{*,m}(\E,N)=\E^m\lambda^{*,m}(1,N).
\end{equation}
\end{lemma}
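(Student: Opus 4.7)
The plan is to exhibit an explicit bijection between the constrained state spaces and show that everything in the Rayleigh quotient scales homogeneously, with the only non-trivial weight coming from the factor $(x_i+x_{i+1})^m$ in the Dirichlet form. Concretely, define the dilation $\fa_\E : \mathcal{S}_{1,N} \to \mathcal{S}_{\E,N}$ by $\fa_\E(y) = \E y$. The first step is to verify that $(\fa_\E)_* \nu^{\ga}_{1,N} = \nu^{\ga}_{\E,N}$. This follows from the classical Gamma/Dirichlet identity: if $X_1,\dots,X_N$ are i.i.d.\ Gamma$(\ga,1)$, then conditionally on $\sum X_k = s$ the vector $X$ is distributed as $s\cdot \mathrm{Dir}(\ga,\dots,\ga)$; in particular $\nu^{\ga}_{\E,N}$ is just $N\E$ times the Dirichlet measure on the simplex, so scaling by $\E$ maps $\nu^{\ga}_{1,N}$ onto $\nu^{\ga}_{\E,N}$.

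Next, for $f \in L^2(\nu^{\ga}_{\E,N})$, set $\tilde f(y) := f(\E y) = f \circ \fa_\E (y)$, which lies in $L^2(\nu^{\ga}_{1,N})$. By the change-of-variables formula, $E_{\nu^{\ga}_{\E,N}}[f] = E_{\nu^{\ga}_{1,N}}[\tilde f]$ and $E_{\nu^{\ga}_{\E,N}}[f^2] = E_{\nu^{\ga}_{1,N}}[\tilde f^2]$, so the zero-mean constraint and the denominator in the Rayleigh quotient are invariant under the correspondence $f \leftrightarrow \tilde f$. The key small observation is that the exchange map commutes with the dilation: $T_{i,i+1,\a}(\E y) = \E\, T_{i,i+1,\a}(y)$, since $\a(\E y_i + \E y_{i+1}) = \E\,\a(y_i+y_{i+1})$. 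Because $P^{*}$ does not depend on the energies, this gives
\begin{equation*}
(E_{i,i+1} f)(\E y) = \int \mu^{\ga}(d\a)\, f(\E\, T_{i,i+1,\a} y) = (E_{i,i+1}\tilde f)(y),
\end{equation*}
hence $(D_{i,i+1} f)(\E y) = (D_{i,i+1}\tilde f)(y)$.

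Finally, in the Dirichlet form $\D^{*,m}_{\E,N}(f) = \sum_{i=1}^{N-1} E_{\nu^{\ga}_{\E,N}}[(x_i+x_{i+1})^m (D_{i,i+1} f)^2]$, change variables $x = \E y$: the squared discrete gradient is invariant by the previous paragraph, while the weight transforms as $(x_i+x_{i+1})^m = \E^m (y_i+y_{i+1})^m$. Therefore
\begin{equation*}
\D^{*,m}_{\E,N}(f) \;=\; \E^m \D^{*,m}_{1,N}(\tilde f).
\end{equation*}
Dividing by $E_{\nu^{\ga}_{\E,N}}[f^2] = E_{\nu^{\ga}_{1,N}}[\tilde f^2]$ and taking the infimum over mean-zero $f$ (equivalently, mean-zero $\tilde f$) yields $\la^{*,m}(\E,N) = \E^m \la^{*,m}(1,N)$. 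There is no real obstacle here; the only point that must be handled with care is the measure-theoretic claim $(\fa_\E)_* \nu^{\ga}_{1,N} = \nu^{\ga}_{\E,N}$, which is immediate from the Gamma--Dirichlet representation of the conditioned product measure.
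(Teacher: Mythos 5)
Your proof is correct and follows essentially the same route as the paper's: the paper also uses the dilation $x \mapsto \E x$ between $\mathcal{S}_{1,N}$ and $\mathcal{S}_{\E,N}$, the fact that it carries $\nu_{1,N}$ to $\nu_{\E,N}$, and the homogeneity $E_{\nu_{\E,N}}[f(-\mathcal{L}^*)f]=\E^m E_{\nu_{1,N}}[f_\E(-\mathcal{L}^*)f_\E]$ with $f_\E(y)=f(\E y)$. You merely spell out the details (the Gamma--Dirichlet pushforward identity and the commutation $T_{i,i+1,\a}(\E y)=\E\,T_{i,i+1,\a}(y)$) that the paper leaves implicit, which is fine.
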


\begin{pf}
Recall that for any $\E> 0$, $\nu_{1,N}$ is the image of $ \nu_{\E
,N}$ under the map $S \dvtx x \to\frac{1}{\E}x$, the unitary change of
scale from $\mathcal{S}_{\E,N}$ to $\mathcal{S}_{1, N}$. Therefore,
for any $f \in L^2(\nu_{\E,N})$, let $f_{\E}(x)=f(\E x)$, then
$f_{\E} \in L^2(\nu_{1,N})$ and
%
%
\begin{equation}
E_{\nu_{\E,N}} \bigl[f \bigl(-\mathcal{L}^*\bigr)f\bigr]= \E^m
E_{\nu_{1,N}} \bigl[f_{\E} \bigl(-\mathcal{L}^*
\bigr)f_{\E}\bigr]
\end{equation}
holds. Then the statement follows immediately from the definition of
the spectral gap.
\end{pf}

To estimate $\lambda^{*,m}(\E,N)$, we introduce a long-range version
of the process $\mathcal{L}^*$, which is defined by the generator
$\mathcal{L}^*_{\mathrm{LR}}\ (=\mathcal{L}^{*,m, \ga}_{\mathrm
{LR}} ) $ as
\[
\mathcal{L}_{\mathrm{LR}}^* f (x)= \frac{1}{N}\sum
_{i<j} (x_i+x_j)^m \bigl
\{ E_{i,j}f (x) -f(x)\bigr\}=\frac{1}{N}\sum
_{i<j} (x_i+x_j)^m
D_{i,j}f (x).
\]
It is easy to see that $ \nu_{\E,N}^{\ga}$ is a reversible measure
of $\mathcal{L}_{\mathrm{LR}}^*$ and the associated Dirichlet form is
given by
\begin{eqnarray*}
\D^{*}_{\mathrm{LR}} (f)&=&\D^{*,m}_{\mathrm{LR}, \E,N} (f) :=
\int\nu_{\E,N}(dx) \bigl[- \mathcal{L}_{\mathrm{LR}}^*f\bigr](x) f(x)
\\
& = &\frac{1}{N} \sum_{i<j} E_{\nu_{\E,N}}
\bigl[(x_i+x_j)^m (E_{i,j}f -f
)^2 \bigr]
\\
&=& \frac{1}{N} \sum_{i<j}
E_{\nu_{\E,N}} \bigl[(x_i+x_j)^m (
D_{i,j}f ) ^2 \bigr]
\end{eqnarray*}
for all $f \in L^2( \nu_{\E,N})$. We use notation $\D^{*}_{\mathrm
{LR}}$ or
$\D^{*,m}_{\mathrm{LR}}$ when there is no confusion and denote the
spectral gap
of $\mathcal{L}^{*}_{\mathrm{LR}}$ in $L^2( \nu_{\E,N})$ by
$\lambda
^{*,m}_{\mathrm{LR}}(\E,N)$.

Comparison techniques between the spectral gap of a nearest-neighbor
interaction process and that of its long-range version are known for
general interacting particle systems, or a class of continuous spin
systems with uniformly positive rate function (e.g., \cite{NS,S}).
However, to apply them to our process, we need to combine their ideas
cleverly because of the nonuniformly positive rate function. In fact,
unlike general comparison theorems, the spectral gap of $\mathcal
{L}^*$ is bounded from below by the spectral gap of $\mathcal
{L}^*_{\mathrm{LR}}$ multiplied by $N^{-2}$ and \textit{the spectral
gap of
$3$-site system}. Denote $\kappa_m=\lambda^{*,m}(\frac{1}{3},3)$ and
$\tilde{\kappa}_m=\lambda^{*,m}_{\mathrm{LR}}(\frac{1}{3},3)$.
Recall that
$\kappa_m$ and $\tilde{\kappa}_m$ depend also on $\gamma$. Our
comparison theorem is precisely given in the following way.

%
\begin{theorem}\label{thm:compare}
For any $m \ge0$, there exists a positive constant $C=C(m,\ga)$ such that
\[
\lambda^{*,m}(\E,N) \ge C\kappa_m N^{-2}
\lambda^{*,m}_{\mathrm{LR}}(\E,N)
\]
for all $\E>0$ and $N \ge2$.
\end{theorem}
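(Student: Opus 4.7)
The strategy is to establish the pointwise Dirichlet form comparison $\D^{*}(f) \ge C \kappa_m N^{-2} \D^{*}_{LR}(f)$ for every $f \in L^2(\nu_{\E,N})$; the spectral gap statement then follows from the variational characterisation. Equivalently, I need to bound each long-range contribution $E_{\nu_{\E,N}}[(x_i+x_j)^m (D_{i,j}f)^2]$ by nearest-neighbor contributions along a path joining $i$ and $j$, with a total cost of order $\kappa_m^{-1}$ times a polynomial in $j-i$, so that summation over pairs produces the $N^2$ factor.

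The core building block is a three-site inequality: for any distinct sites $a,b,c \in \La_N$,
\begin{equation*}
E_{\nu_{\E,N}}\!\left[(x_a+x_c)^m (D_{a,c}f)^2\right] \le \kappa_m^{-1}\, E_{\nu_{\E,N}}\!\left[(x_a+x_b)^m (D_{a,b}f)^2 + (x_b+x_c)^m (D_{b,c}f)^2\right].
\end{equation*}
To prove this, I would condition on $\F_{\{a,b,c\}^c}$. By the non-interference property of the product-Gamma reversible measure, the conditional law of $(x_a, x_b, x_c)$ is $\nu^{\ga}_{E_{abc}/3, 3}$ with $E_{abc}:=x_a+x_b+x_c$, and $E_{abc}$ is $\F_{\{a,b,c\}^c}$-measurable by conservation of the total energy. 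The spectral gap on this three-site fibre equals $E_{abc}^m \kappa_m$ by the scaling relation of Lemma~\ref{lem:scaling}, so the 3-site Poincar\'e inequality reads
\begin{equation*}
E_{abc}^m \kappa_m \, \mathrm{Var}[f \mid \F_{\{a,b,c\}^c}] \le E\!\left[(x_a+x_b)^m (D_{a,b}f)^2 + (x_b+x_c)^m (D_{b,c}f)^2 \mid \F_{\{a,b,c\}^c}\right].
\end{equation*}
The claim then follows from the pointwise bound $(x_a+x_c)^m \le E_{abc}^m$, together with the observation that $E[(D_{a,c}f)^2 \mid \F_{\{a,b,c\}^c}] \le \mathrm{Var}[f \mid \F_{\{a,b,c\}^c}]$ (apply the tower property and the law of total variance, using the inclusion $\F_{\{a,b,c\}^c} \subset \F_{a,c}$), followed by taking full expectation.

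The main step is then a moving-particle lemma of the shape
\begin{equation*}
E\!\left[(x_i+x_j)^m (D_{i,j}f)^2\right] \le \frac{C(m,\ga)\,(j-i)}{\kappa_m} \sum_{k=i}^{j-1} E\!\left[(x_k+x_{k+1})^m (D_{k,k+1}f)^2\right],
\end{equation*}
which, once available, gives $\D^{*}_{LR}(f) \le (C' N^2/\kappa_m)\D^{*}(f)$ after summation over pairs, since $\sum_{i \le k < j}(j-i) = kN(N-k)/2 = O(N^3)$ for each fixed edge $k$, and the overall $1/N$ normalization in $\D^{*}_{LR}$ trims one factor. The hard part and main obstacle is to prove this moving-particle lemma without an exponential-in-distance blow-up: a direct iteration of the three-site inequality along the path $i,i+1,\ldots,j$ produces a cumulative cost $\kappa_m^{-(j-i)}$, which cannot be absorbed into $N^2$. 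I expect the route around this is to pair each three-site contraction with an averaging over the choice of intermediate site (or, equivalently, to use a carefully chosen family of weighted paths) and set up an induction on the path length in which the $\kappa_m^{-1}$ factors telescope rather than accumulate. In spirit this extends the moving-particle constructions of \cite{Ca08,S}, adapted to the non-uniformly positive weight $(x_i+x_j)^m$ via the three-site inequality above.
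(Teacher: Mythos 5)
Your reduction to a Dirichlet form comparison, your three-site inequality (which is correct, and is in fact the same estimate the paper derives from the scaling relation (\ref{eq:scaling2}) together with $E_{\nu_{\E,N}}[(E_{i,j,k}f-f)^2|\F_{i,j,k}]\ge E_{\nu_{\E,N}}[(E_{i,j}f-f)^2|\F_{i,j,k}]$), and the final summation over pairs all match the paper's outline. But the statement you yourself identify as ``the hard part'' --- a moving-particle bound with cost $C(m,\ga)\,|j-i|\,\kappa_m^{-1}$, i.e.\ with $\kappa_m$ appearing only to the \emph{first} power --- is exactly the content of the paper's Lemma \ref{lem:moving} and Proposition \ref{prop:moving}, and your proposal does not prove it. The fixes you gesture at do not obviously work: iterating the three-site inequality along the chain costs $\kappa_m^{-(j-i)}$, and a dyadic/bisection scheme still nests the inequality to depth $\log_2|j-i|$, giving a cost $|j-i|^{\log_2(1/\kappa_m)}$ and hence a final bound of the form $N^{-c(m,\ga)}$ with an exponent depending on $\kappa_m$, not the claimed $C\kappa_m N^{-2}$; averaging over the choice of intermediate site changes constants but not the nesting depth, and no telescoping of the $\kappa_m^{-1}$ factors is exhibited.

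The paper avoids any iteration of the three-site gap by working with the energy-swap maps $\pi_{i,j}$ rather than with repeated conditionings. In Lemma \ref{lem:moving} the swap $\pi_{i,j}$, weighted by $x_i^m$, is written as a composition of at most $4|j-i|-3$ swaps $\pi_{n_k,n_{k+1}}$ with $|n_k-n_{k+1}|\in\{1,2\}$, chosen so that the tagged energy travels with the path, i.e.\ $(S_kx)_{n_k}=x_i$; a single Schwarz inequality over this telescoping sum produces the linear factor $|j-i|$, and permutation invariance of $\nu_{\E,N}$ turns the weight $x_i^m$ into $x_{n_k}^m$ at each step. Each elementary term is then bounded by a genuine nearest-neighbor Dirichlet term using $E_{k,k+1}f(\pi_{k,k+1}x)=E_{k,k+1}f(x)$, and the three-site gap (your building block) is invoked exactly once, at bounded depth, only to convert the next-nearest-neighbor terms $(x_k+x_{k+2})^m(D_{k,k+2}f)^2$ into nearest-neighbor ones --- which is why the final estimate is linear in $\kappa_m^{-1}$. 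Proposition \ref{prop:moving} then treats the actual long-range exchange by writing $f(T_{i,j,\a}x)-f(x)$ via the conjugation $\pi_{i,j-1}\circ T_{j-1,j,\a}\circ\pi_{i,j-1}$, so that only swaps and one nearest-neighbor exchange appear, and Lemma \ref{lem:moving} applies. This tagged-energy swap construction (in the spirit of \cite{NS}) is the missing idea in your proposal; without it, or some substitute achieving the single factor of $\kappa_m^{-1}$, the argument does not yield the theorem as stated.
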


We give a proof of this theorem in the next section.

Once we have Theorem~\ref{thm:compare} and the scaling relation (\ref
{eq:scaling}), then all we have to show is that $\kappa_m >0$ and
$\lambda^{*,m}_{\mathrm{LR}}(1,N)$ is uniformly positive in $N$.
$\kappa_m >0$
follows from Corollary~\ref{cor:kappa} below. The main work of this
paper is to give a uniform lower bound for the spectral gap of
$\mathcal{L}^*_{\mathrm{LR}}$ in the size of the system with a fixed mean
energy $\E$. It is already done for the case $m=0$ in \cite{GF} (also
in \cite{S} by a different proof) as follows.

%
\begin{theorem}[(\cite{GF,S})]\label{thm:0}
For any $\E>0$ and $N \ge2$,
%
%
\begin{equation}
\label{eq:gamnondeenerate} \lambda^{*,0}_{\mathrm{LR}}(\E,N)=\frac
{\ga N+1}{N(2\ga+1)}.
\end{equation}
In particular, $\inf_N \lambda^{*,0}_{\mathrm{LR}}(\E,N)=\frac{\ga
}{2\ga
+1} >0$ for any $\E> 0$.
\end{theorem}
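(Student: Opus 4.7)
The plan is to establish the identity by producing the eigenfunction that realises the gap and then verifying it is extremal. First I would compute $-\mathcal{L}_{LR}^{*}$ with $m=0$ on the symmetric quadratic $f(x)=\sum_{k=1}^N x_k^2$. Conditionally on $\F_{i,j}$ one has $x_i=(x_i+x_j)U$ with $U\sim\mu^\gamma=\mathrm{Beta}(\gamma,\gamma)$ and $E_{\mu^\gamma}[U^2+(1-U)^2]=(\gamma+1)/(2\gamma+1)$, hence
\begin{equation*}
(x_i^2+x_j^2)-E_{i,j}[x_i^2+x_j^2] = (x_i^2+x_j^2)-\frac{\gamma+1}{2\gamma+1}(x_i+x_j)^2.
\end{equation*}
Summing over pairs $i<j$, using $\sum_{i<j}(x_i+x_j)^2=(N-2)\sum_k x_k^2+N^2\E^2$ on $\mathcal{S}_{\E,N}$ and simplifying yields
\begin{equation*}
-\mathcal{L}_{LR}^{*} f = \frac{\gamma N+1}{N(2\gamma+1)}\Bigl(f-\frac{(\gamma+1)N^2\E^2}{\gamma N+1}\Bigr).
\end{equation*}
A direct computation using the $\mathrm{Beta}(\gamma,(N-1)\gamma)$ marginals of $\nu_{\E,N}^\gamma$ confirms $E_{\nu_{\E,N}^\gamma}[\sum_k x_k^2]=\tfrac{(\gamma+1)N^2\E^2}{\gamma N+1}$, so $\tilde f:=f-E_{\nu_{\E,N}^\gamma}[f]$ is a legitimate mean-zero eigenfunction with eigenvalue $\frac{\gamma N+1}{N(2\gamma+1)}$, giving the upper bound $\lambda^{*,0}_{LR}(\E,N)\le\frac{\gamma N+1}{N(2\gamma+1)}$.

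For the matching lower bound I would exploit the $S_N$-invariance of both $\nu_{\E,N}^\gamma$ and $\mathcal{L}_{LR}^{*}$, together with the fact that the generator preserves polynomial degree. Decompose $L^2(\nu_{\E,N}^\gamma)=\bigoplus_{k\ge 0}H_k$ via the Hoeffding/ANOVA construction: $H_0$ is the constants, $H_1=\mathrm{span}\{x_i-\E\}$, and $H_k$ is the orthogonal complement inside the degree-$k$ polynomials of $H_0\oplus\cdots\oplus H_{k-1}$. Each $H_k$ is invariant under $\mathcal{L}_{LR}^{*}$, and applying the same calculation to $x_1-\E$ gives $-\mathcal{L}_{LR}^{*}(x_1-\E)=\tfrac12(x_1-\E)$, so the contribution of $H_1$ to the spectrum is exactly $1/2$. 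The quadratic $\tilde f$ above witnesses the eigenvalue $\frac{\gamma N+1}{N(2\gamma+1)}$ on $H_2$. Since $\frac{\gamma N+1}{N(2\gamma+1)}\le\tfrac12$ for all $N\ge 2$ (with equality iff $N=2$), what remains is to verify that every eigenvalue of $-\mathcal{L}_{LR}^{*}$ on $H_k$ is at least $\frac{\gamma N+1}{N(2\gamma+1)}$ for every $k\ge 3$.

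This last step is the technical crux. One natural route is to expand a general degree-$k$ element of $H_k$ in a basis adapted to the exchangeable Dirichlet structure and compute the action of $\sum_{i<j}E_{i,j}$ termwise via the moment formula $E_{\mu^\gamma}[U^a(1-U)^b]=\frac{\Gamma(2\gamma)\Gamma(\gamma+a)\Gamma(\gamma+b)}{\Gamma(\gamma)^2\Gamma(2\gamma+a+b)}$, reducing the required bound to an algebraic inequality in the parameters $k,N,\gamma$. An alternative route, closer in spirit to \cite{GF} and \cite{S}, is an induction on $N$: conditioning on one coordinate and using that the conditional law of $\nu_{\E,N}^\gamma$ given $x_N=a$ is $\nu_{(N\E-a)/(N-1),N-1}^\gamma$, together with the scaling Lemma \ref{lem:scaling} (trivial here since $m=0$) and the inductive hypothesis, reduces the $N$-site Dirichlet form inequality to the $(N-1)$-site one plus an explicit boundary contribution, and careful tracking of the constants reproduces the closed form. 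The \emph{in particular} statement is immediate from $\frac{\gamma N+1}{N(2\gamma+1)}=\frac{\gamma}{2\gamma+1}+\frac{1}{N(2\gamma+1)}$, which is strictly decreasing in $N$ with limit $\frac{\gamma}{2\gamma+1}$ as $N\to\infty$.
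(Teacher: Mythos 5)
Your first half is sound: the explicit eigenfunction computation is correct. Under the conditional law given $\F_{i,j}$ one indeed has $x_i=(x_i+x_j)U$ with $U\sim\mu^{\ga}$ and $E_{\mu^{\ga}}[U^2+(1-U)^2]=\frac{\ga+1}{2\ga+1}$; the identity $\sum_{i<j}(x_i+x_j)^2=(N-2)\sum_k x_k^2+N^2\E^2$ on $\mathcal{S}_{\E,N}$, the value $E_{\nu^{\ga}_{\E,N}}[\sum_k x_k^2]=\frac{(\ga+1)N^2\E^2}{\ga N+1}$, and the resulting eigenvalue $\frac{\ga N+1}{N(2\ga+1)}$ for $\tilde f$ all check out, as do $-\mathcal{L}^{*,0}_{LR}(x_1-\E)=\frac{1}{2}(x_1-\E)$ and the monotonicity giving the \lq\lq in particular" clause. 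This, however, only yields the upper bound $\la^{*,0}_{LR}(\E,N)\le\frac{\ga N+1}{N(2\ga+1)}$.

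The matching lower bound is the actual content of the theorem (which the paper does not reprove but quotes from \cite{GF} and \cite{S}), and your proposal does not establish it. Concretely: (i) even inside your degree-two stratum $H_2$ you exhibit only the $S_N$-symmetric eigenfunction $\tilde f$; $H_2$ contains further isotypic components (functions built from $x_1^2-x_2^2$ or from the products $x_ix_j$), and you must show none of them carries an eigenvalue below $\frac{\ga N+1}{N(2\ga+1)}$; (ii) for all $k\ge 3$ you only name two possible strategies --- a termwise expansion \lq\lq reducing to an algebraic inequality in $k,N,\ga$", or an induction on $N$ by conditioning on one coordinate --- and carry out neither. These reductions are precisely where the difficulty lies: controlling the Rayleigh quotient on the higher strata, or closing an induction with the exact constant $\frac{\ga N+1}{N(2\ga+1)}$ rather than some weaker uniform bound, is the substance of \cite{GF} and \cite{S}, not a routine verification. (A smaller unstated point: the invariance of each $H_k$ uses that $E_{i,j}$ preserves polynomial degree and that polynomials are dense in $L^2(\nu_{\E,N})$; both are true but should be said.) As written, the proposal is an upper-bound proof plus a plan, not a proof of the stated equality.
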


To obtain a uniform bound for positive $m$, we prove a comparison
theorem between the spectral gap of the process with $m \ge1$ and with
$m=0$. To do this, we use the convexity of the function $x^m$ as a
function of $x \in\R_+$, which is true only for $m \ge1$.

%
\begin{theorem}\label{thm:convex}
For any $m\ge1$, $\E>0$ and $N \ge2$,
\[
\lambda^{*,m}_{\mathrm{LR}}(\E,N) \ge\frac{ \E^m \kappa_m}{2}
\lambda
^{*,0}_{\mathrm{LR}}(\E, N).
\]
\end{theorem}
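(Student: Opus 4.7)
The plan is to establish the stronger Dirichlet form comparison
\[
\D^{*,m}_{LR,\E,N}(f) \ge \frac{\E^m \k_m}{2}\, \D^{*,0}_{LR,\E,N}(f) \qquad \text{for all } f \in L^2(\nu_{\E,N}),
\]
from which Theorem~\ref{thm:convex} follows at once upon combining with Theorem~\ref{thm:0}.

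The construction is a pointwise application of the three-site nearest-neighbour spectral gap. For each pair $i<j$ and each third site $k \neq i,j$, I apply this inequality on the triple $\{i,k,j\}$ with $k$ placed in the middle, after conditioning on $\F_{i,j,k}$. By the non-interference property the conditional law of $(x_i,x_k,x_j)$ is $\nu_{T/3,3}$ with $T = x_i+x_j+x_k$, and Lemma~\ref{lem:scaling} gives $\la^{*,m}(T/3,3) = T^m \k_m$. Hence
\[
\k_m T^m\, V_{ijk}(f) \le E_{ijk}\bigl[(x_i+x_k)^m (D_{ik}f)^2 + (x_k+x_j)^m (D_{jk}f)^2\bigr],
\]
where $V_{ijk}(f) = E_{ijk}[(f - E_{ijk}f)^2]$. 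The Pythagorean decomposition associated with the inclusion $\F_{i,j,k}\subset\F_{i,j}$ yields $V_{ijk}(f) \ge E_{ijk}[(D_{ij}f)^2]$, so the same inequality persists with $E_{ijk}[(D_{ij}f)^2]$ on the left.

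Next I take total expectation and sum over the $N-2$ values of $k$. The key point is that for $m\ge 1$ the function $z \mapsto z^m$ is convex, so Jensen's inequality together with $\sum_l x_l = N\E$ gives the pointwise bound
\[
\sum_{k \ne i,j}(x_i+x_j+x_k)^m \ge (N-2)\left(\frac{(N-3)(x_i+x_j) + N\E}{N-2}\right)^{\!m} \ge (N-2)\,\E^m,
\]
converting the random factor $T^m$ into the uniform constant $\E^m$. Summing over $i<j$, a direct count shows that each bond $\{a,b\}$ appears exactly $2(N-2)$ times on the right-hand side; dividing by $2(N-2)N$ produces the desired Dirichlet form comparison for $N\ge 3$. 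The case $N=2$ (where the sum over $k$ is empty) is handled separately using the explicit values $\la^{*,m}_{LR}(\E,2)=2^{m-1}\E^m$ and $\la^{*,0}_{LR}(\E,2)=1/2$ together with a trivial upper bound on $\k_m$.

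The main obstacle is the convexity step above: the three-site spectral gap inequality produces a pointwise random factor $T^m$, which must be turned into the uniform constant $\E^m$ in order to match the $m=0$ Dirichlet form. Jensen's inequality applied to $z\mapsto z^m$ accomplishes exactly this but requires $m\ge 1$; for $0<m<1$ the inequality reverses and this strategy would fail, which is precisely why the theorem is restricted to $m\ge 1$.
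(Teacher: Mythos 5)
Your proposal is correct and follows essentially the same route as the paper: both reduce $E[(D_{i,j}f)^2]$ through a third site $k$ by the three-site gap $\kappa_m$ together with the scaling relation (\ref{eq:scaling2}), the Pythagorean decomposition for $\F_{i,j,k}\subset\F_{i,j}$, and convexity of $z\mapsto z^m$, ending with the same Dirichlet form comparison against $\D^{*,0}_{LR}$. The only differences are bookkeeping: the paper applies Jensen in the form $\E^m\le\frac{1}{N}\sum_k x_k^m$ and absorbs the $k\in\{i,j\}$ terms via $x_i^m+x_j^m\le(x_i+x_j)^m$ and $\kappa_m\le\frac{3}{2}$, whereas you average $(x_i+x_j+x_k)^m$ over the $N-2$ third sites and check $N=2$ separately.
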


In this estimate, the spectral gap of $3$-site system appears again. A
proof of the Theorem~\ref{thm:convex} is given in Section~\ref
{sec:long-range}.

The analysis of the case $0 < m <1$ is more difficult and complicated.
First, we give a new type of comparison theorem between $\lambda
^{*,m}_{\mathrm{LR}}(\E,N)$ and $\lambda^{*,2m}_{\mathrm{LR}}(\E,N)$:

%
\begin{proposition}\label{prop:compm2m}
For any $m \ge0$, if $\tilde{\kappa}_m \ge\frac{1}{3}$, then
%
%
\begin{equation}
\label{eq:compm2m} \lambda^{*,m}_{\mathrm{LR}}(\E, N) \ge\sqrt{
\biggl( (3\tilde{\kappa}_m-1) \biggl(1-\frac{2}{N}\biggr)+
\frac{1}{N} \biggr) \lambda^{*,2m}_{\mathrm{LR}}(\E,N) }
\end{equation}
holds for all $\E>0$ and $N \ge2$.
\end{proposition}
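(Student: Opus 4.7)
The plan is to reduce the spectral-gap inequality to a pointwise Dirichlet-form comparison and then derive the latter from the $3$-site long-range Poincar\'e inequality combined with a Cauchy--Schwarz argument. It suffices to establish
\[
\D^{*,m}_{LR,\E,N}(f)^{2}\,\ge\,\Bigl((3\tilde{\kappa}_m-1)\bigl(1-\tfrac{2}{N}\bigr)+\tfrac{1}{N}\Bigr)\,\D^{*,2m}_{LR,\E,N}(f)\,E_{\nu_{\E,N}}[f^{2}]
\]
for every mean-zero $f\in L^{2}(\nu_{\E,N})$. Indeed, combining with the Poincar\'e inequality $\D^{*,2m}_{LR}(f)\ge\lambda^{*,2m}_{LR}(\E,N)E[f^{2}]$ on the right-hand side and dividing by $E[f^{2}]^{2}$ yields the proposition via the variational definition of $\lambda^{*,m}_{LR}(\E,N)$.

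For the $3$-site input, fix a triple $T=\{i,j,k\}$ and condition on $\F_{T^{c}}=\sigma(x_{\ell}:\ell\notin T)$. The conditional law of $(x_{i},x_{j},x_{k})$ is the $3$-site gamma-conditional measure with total $x_{T}:=x_{i}+x_{j}+x_{k}$, and by Lemma~\ref{lem:scaling} the corresponding long-range $m$-gap equals $(x_{T})^{m}\tilde{\kappa}_{m}$. Summing the resulting conditional Poincar\'e inequality over all $\binom{N}{3}$ triples, and using that each pair $(i,j)$ lies in exactly $N-2$ triples, one obtains the key estimate
\[
\tfrac{N(N-2)}{3}\,\D^{*,m}_{LR}(f)\,\ge\, \tilde{\kappa}_{m}\sum_{T}E\bigl[(x_{T})^{m}\,\mathrm{Var}(f\mid\F_{T^{c}})\bigr].
\]

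The heart of the proof is then to lower-bound the right-hand side above by a constant times $\sqrt{\D^{*,2m}_{LR}(f)\,E[f^{2}]}$. I would do this by applying Cauchy--Schwarz to the weight $(x_{T})^{m}$ and the non-negative conditional variance $h_{T}:=\mathrm{Var}(f\mid\F_{T^{c}})$, distributing the $(x_{T})^{m}$ mass across the three pairs contained in $T$ so as to recover $\D^{*,2m}_{LR}(f)$ after summation, while simultaneously using the ANOVA decomposition of $f$ on the exchangeable product measure $\nu^{\ga,N}$ to convert $\sum_{T}E[h_{T}]$ into a controlled multiple of $E[f^{2}]$. Squaring the $3$-site estimate, substituting these bounds, and tracking the combinatorial constants (the $N-2$ pairs per triple, the $\binom{N}{3}$ triples, and the $3$-site prefactor $\tilde{\kappa}_{m}$) should deliver the stated coefficient $(3\tilde{\kappa}_{m}-1)(1-2/N)+1/N$, whose non-negativity is exactly guaranteed by the hypothesis $\tilde{\kappa}_{m}\ge\tfrac13$.

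The main obstacle is this Cauchy--Schwarz step: a direct application to $E[(x_{T})^{m}h_{T}]$ produces only an upper bound, so the $3$-site inequality must be invoked at both the $m$ and $0$ levels and the two estimates interwoven in order to recover the correct direction. Two sanity checks pin down the constants: at $m=0$ the proposition must become an identity, which is consistent because $(3\tilde{\kappa}_{0}-1)(1-2/N)+1/N=\lambda^{*,0}_{LR}(\E,N)$ by Theorem~\ref{thm:0}; and at $N=3$ the proposition reduces to $\tilde{\kappa}_{m}\ge\tilde{\kappa}_{2m}$, which is immediate from the pointwise bound $(x_{i}+x_{j})^{2m}\le(x_{i}+x_{j})^{m}$ on $\mathcal{S}_{1/3,3}$.
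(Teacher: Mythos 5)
Your reduction is logically valid only if the pointwise inequality $\D^{*,m}_{LR}(f)^{2}\ge \bigl((3\tilde{\kappa}_m-1)(1-\tfrac2N)+\tfrac1N\bigr)\D^{*,2m}_{LR}(f)\,E_{\nu_{\E,N}}[f^{2}]$ actually holds for every mean-zero $f$, and this is exactly what you never prove: it is a statement \emph{stronger} than the proposition, it is not established anywhere in the paper, and it does not follow from anything you derive, since Cauchy--Schwarz gives $\D^{*,m}_{LR}(f)^{2}\le E_{\nu_{\E,N}}[(\mathcal{L}^{*,m}_{LR}f)^{2}]\,E_{\nu_{\E,N}}[f^{2}]$, i.e.\ points in the opposite direction. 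Your triangle estimate
\begin{equation*}
\tfrac{N(N-2)}{3}\,\D^{*,m}_{LR}(f)\;\ge\;\tilde{\kappa}_m\sum_{T}E_{\nu_{\E,N}}\bigl[(x_i+x_j+x_k)^{m}\,\mathrm{Var}(f\mid \sigma(x_\ell:\ell\notin T))\bigr]
\end{equation*}
is correct, but it is linear in the Dirichlet form on both sides; redistributing $(x_T)^{m}$ over the pairs of $T$ and invoking an ANOVA decomposition cannot produce a lower bound by $\sqrt{\D^{*,2m}_{LR}(f)E[f^{2}]}$, because every application of Cauchy--Schwarz to terms like $E[(x_T)^m h_T]$ or $E[h_b(D_bf)^2]$ bounds them from \emph{above} by such square roots. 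You acknowledge this yourself (``produces only an upper bound'') and leave the repair as an unspecified ``interweaving'' of the $m$- and $0$-level inequalities; that interweaving is precisely the missing idea, and the constant $(3\tilde{\kappa}_m-1)(1-\tfrac2N)+\tfrac1N$ is never actually derived. The $m=0$ and $N=3$ consistency checks are correct but do not substitute for the central step.

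The mechanism the paper uses, and which your sketch lacks, is Caputo's device of working with the \emph{squared generator} rather than the Dirichlet form: the spectral gap is the best constant in $E_{\nu}[(\mathcal{L}f)^{2}]\ge \la\,E_{\nu}[f(-\mathcal{L})f]$, whence $\la\ge\inf\sqrt{E_{\nu}[(\mathcal{L}f)^{2}]/E_{\nu}[f^{2}]}$ --- this is where the square root in the statement comes from. Expanding $E_{\nu_{\E,N}}[(\mathcal{L}^{*,m}_{LR}f)^{2}]=\tfrac1{N^{2}}\sum_{b,b'}E[h_bh_{b'}D_bfD_{b'}f]$, the diagonal terms automatically carry the $2m$-weights $h_b^{2}=(x_i+x_j)^{2m}$; the cross terms with $b\nsim b'$ are nonnegative because $E_b$ and $E_{b'}$ commute and the weights are measurable with respect to both $\F_b$ and $\F_{b'}$; the cross terms with $b\sim b'$ are grouped into triangles and bounded below by applying the same squared-generator inequality to the conditional $3$-site long-range process, whose gap $\tilde{\kappa}_m(\sum_{i\in T}x_i)^{m}\ge\tilde{\kappa}_m h_b$ converts $h_b(\sum_{i\in T}x_i)^m$ into $h_b^{2}$; the bookkeeping (each pair lies in $N-2$ triangles, and the overcounted diagonal is corrected by $-(N-3)\sum_bE[h_b^{2}(D_bf)^{2}]$) yields the coefficient $(3\tilde{\kappa}_m-1)(N-2)+1$ over $N^{2}$, and the $2m$-level Poincar\'e inequality finishes the proof. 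Your conditioning on triples is the right combinatorial skeleton, but it is applied to the wrong quadratic object, so as written the proposal has a genuine gap at its core.
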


We give a proof of this proposition in Section~\ref{sec:long-range}.
Obviously, to use inequality (\ref{eq:compm2m}), we need the following
key lemma and its corollary.

%
\begin{lemma}\label{lem:kappa1}
For any $\ga>0$,
\[
\tilde{\kappa}_1 >\tfrac{1}{3}.
\]
\end{lemma}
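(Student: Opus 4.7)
The aim is to prove the strict Poincar\'e-type inequality $\mathcal{D}^{*,1}_{LR}(f) > \tfrac{1}{3}\,\mathrm{Var}_{\nu^{\ga}_{1/3,3}}(f)$ for every non-constant $f\in L^{2}(\nu^{\ga}_{1/3,3})$, which is exactly the statement $\tilde{\kappa}_{1} > \tfrac{1}{3}$.

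First I would reformulate. On $\mathcal{S}_{1/3,3}$ one has $x_{i}+x_{j}=1-x_{k}$ (with $k$ the complementary index), and since $N=3$ the $\sigma$-algebra $\mathcal{F}_{i,j}$ is generated by $x_{k}$ alone, so $E_{i,j}f = E[f\mid x_{k}] =: u_{k}$. The Dirichlet form then reads
\[
\mathcal{D}^{*,1}_{LR}(f) \;=\; \frac{1}{3}\sum_{k=1}^{3} E_{\nu^{\ga}_{1/3,3}}\bigl[(1-x_{k})(f-u_{k})^{2}\bigr],
\]
and a short expansion exploiting $\sum_{k}(1-x_{k})=2$ shows that the desired inequality is equivalent to
\[
\sum_{k=1}^{3} E_{\nu^{\ga}_{1/3,3}}\bigl[(1-x_{k})\,u_{k}^{2}\bigr] \;<\; E_{\nu^{\ga}_{1/3,3}}[f^{2}] \quad\text{for all $f$ with $E_{\nu^{\ga}_{1/3,3}}[f]=0$.}
\]

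Second, I would test on $f=x_{1}-x_{2}$, which sits in the standard representation of the natural $S_{3}$-symmetry. Using the Dirichlet$(\ga,\ga,\ga)$ moments (in particular $E[x_{i}^{2}]=(\ga+1)/(3(3\ga+1))$, $E[x_{i}^{3}]$, $E[x_{i}^{2}x_{j}]$), a direct computation yields $\mathrm{Var}(f)=2/(3(3\ga+1))$ and $\mathcal{D}^{*,1}_{LR}(f) = 2(\ga+1)/(3(3\ga+1)(3\ga+2))$, so the Rayleigh quotient equals $(\ga+1)/(3\ga+2)$. Because $3(\ga+1)-(3\ga+2)=1>0$, this ratio is strictly greater than $1/3$ for every $\ga>0$; this already gives the upper bound $\tilde{\kappa}_{1}\leq(\ga+1)/(3\ga+2)$, and suggests that $(\ga+1)/(3\ga+2)$ is the exact value of the gap.

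Third, for the matching lower bound I would invoke the sharp three-site analysis of the Appendix. Because $\mathcal{L}^{*,1}_{LR}$ commutes with coordinate permutations, $L^{2}(\nu^{\ga}_{1/3,3})$ decomposes into $S_{3}$-isotypic components (trivial plus two copies of the standard representation). The minimum Rayleigh quotient on the standard component is realised by the test function of Step~2 and equals $(\ga+1)/(3\ga+2)$; on the trivial (symmetric) component one tests with $f=\sum_{i}g(x_{i})$, which reduces the problem to a one-variable Beta$(\ga,2\ga)$ Poincar\'e estimate whose smallest eigenvalue is no smaller than $(\ga+1)/(3\ga+2)$. Combining the two sectors yields $\tilde{\kappa}_{1} \geq (\ga+1)/(3\ga+2) > \tfrac{1}{3}$ and closes the proof.

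\emph{Main obstacle.} The hard step is the matching lower bound. The operator $\mathcal{L}^{*,1}_{LR}$ does \emph{not} preserve polynomial degree --- for instance $-\mathcal{L}^{*,1}_{LR}(x_{1}-x_{2})=\tfrac{1}{2}(x_{1}^{2}-x_{2}^{2})$ --- so the linear minimiser of Step~2 is not an eigenfunction, and the spectral problem genuinely lives on an infinite-dimensional polynomial space. It is precisely this difficulty that the sharp three-site spectral estimate of the Appendix, advertised in the introduction as the key of the whole argument, is designed to overcome, through the isotypic decomposition and a careful one-variable reduction on the symmetric component.
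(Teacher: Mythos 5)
Your Step 1 reformulation is correct and coincides with the paper's first reduction: writing $u_k=E[f\mid x_k]$ and using $\sum_k(1-x_k)=2$, the claim $\tilde{\kappa}_1>\tfrac13$ becomes the uniform bound $\sup\{\sum_{k=1}^3E[(1-x_k)u_k^2]/E[f^2]:E[f]=0\}<1$. Your Step 2 computation for $f=x_1-x_2$ is also correct (the Rayleigh quotient is $(\ga+1)/(3\ga+2)$), but it only yields the \emph{upper} bound $\tilde{\kappa}_1\le(\ga+1)/(3\ga+2)$; a test function can never produce the lower bound that the lemma asserts. The entire content of the lemma is the lower bound, and your Step 3 does not supply it. Invoking ``the sharp three-site analysis of the Appendix'' is circular: that analysis \emph{is} the proof of this lemma, so you cannot use it as an ingredient. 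Your two substitute claims are unjustified: (i) that the minimum over the standard $S_3$-isotypic component is realised by the linear function of Step 2 --- as you yourself note, $\mathcal{L}^{*,1}_{LR}$ does not preserve polynomial degree and $x_1-x_2$ is not an eigenfunction, so there is no reason the infimum over that infinite-dimensional sector is attained there; and (ii) that the symmetric sector reduces to a one-variable Beta$(\ga,2\ga)$ Poincar\'e inequality by ``testing with $f=\sum_i g(x_i)$'' --- functions of that form are only a proper subspace of the symmetric sector (e.g.\ symmetric polynomials involving products of power sums are not of this form on $\mathcal{S}_{1/3,3}$), and in any case testing a subfamily again gives upper, not lower, bounds. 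A further small imprecision: strict inequality $\D^{*,1}_{LR}(f)>\tfrac13\mathrm{Var}(f)$ for each non-constant $f$ would only give $\tilde{\kappa}_1\ge\tfrac13$; one needs a bound that is uniform over $f$.

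For comparison, the paper's proof carries out exactly the hard step you defer: it restricts to polynomials $P_n$ (dense in $L^2(\nu)$), expands conditional expectations in the Jacobi polynomials $J_n$ orthogonal for the Beta$(\ga,2\ga)$ marginal, and uses the intertwining identity $E[J_n(x_i)\mid x_j]=\nu_nJ_n(x_j)$ together with the decomposition into $F_n,G_n,H_n$ and the complement $Q_n^\perp$ (on which all $E[\,\cdot\mid x_i]$ vanish). This reduces the uniform bound to two tridiagonal quadratic-form inequalities, (\ref{eq1}) and (\ref{eq2}), which are then verified through explicit estimates of the coefficients $\nu_n(\ga),p_n(\ga),q_n(\ga)$ and a careful choice of weight sequences $\a_n(\ga),\b_n(\ga)$ (Propositions \ref{prop:a} and \ref{prop:b}, split into the regimes $\ga\ge\tfrac23$ and $\ga<\tfrac23$). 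None of this machinery, or any substitute for it, appears in your proposal, so the proof is incomplete at its decisive point.
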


Here, we emphasize that $\tilde{\kappa}_m$ depends on $\ga$. A proof
of this lemma is given in the \hyperref[app]{Appendix}. The next corollary is shown
easily from this lemma.

%
\begin{corollary}\label{cor:kappa}
For any $m \ge0$ and $\ga>0$,
\[
3\tilde{\kappa}_m \ge\kappa_m >0.
\]
Moreover, for any $m \le1$,
\[
\tilde{\kappa}_m > \tfrac{1}{3}.
\]
\end{corollary}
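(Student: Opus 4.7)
The plan is to prove the three distinct assertions hidden in the corollary in sequence, treating the easy parts by direct Dirichlet form comparison and isolating the hard case.

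\textbf{Step 1.} For the bound $3\tilde\kappa_m \ge \kappa_m$, I would simply note that on $\mathcal S_{1/3,3}$, the long-range and nearest-neighbor Dirichlet forms differ by the non-negative contribution of the single missing pair $(1,3)$:
\[
3\,\D^{*,m}_{LR,1/3,3}(f) \;=\; \D^{*,m}_{1/3,3}(f) \;+\; E_{\nu^\ga_{1/3,3}}\!\bigl[(x_1+x_3)^m(D_{1,3}f)^2\bigr] \;\ge\; \D^{*,m}_{1/3,3}(f).
\]
Dividing by $\operatorname{Var}(f)$ and infimizing yields $3\tilde\kappa_m \ge \kappa_m$.

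\textbf{Step 2.} For $\tilde\kappa_m > 1/3$ when $m \le 1$, I would exploit $x_i+x_j = 1-x_k \in [0,1]$ on $\mathcal S_{1/3,3}$ together with the elementary inequality $y^m \ge y$ valid for $y\in[0,1]$ and $m\le 1$. Pointwise $(x_i+x_j)^m \ge x_i+x_j$, so $\D^{*,m}_{LR,1/3,3}(f) \ge \D^{*,1}_{LR,1/3,3}(f)$, and Lemma \ref{lem:kappa1} immediately gives $\tilde\kappa_m \ge \tilde\kappa_1 > 1/3$.

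\textbf{Step 3.} For $\kappa_m > 0$, rearranging the identity of Step 1 yields
\[
\D^{*,m}_{1/3,3}(f) \;\ge\; (3\tilde\kappa_m - 1)\operatorname{Var}(f),
\]
where I use $(x_1+x_3)^m\le 1$ on the simplex (for any $m\ge 0$) together with the orthogonal-projection bound $E[(D_{1,3}f)^2] = \operatorname{Var}(f) - \operatorname{Var}(E_{1,3}f) \le \operatorname{Var}(f)$. For $m\le 1$, Step 2 makes $3\tilde\kappa_m-1 > 0$, delivering $\kappa_m \ge 3\tilde\kappa_m - 1 > 0$.

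The \emph{main obstacle} is the regime $m>1$. The inequality $y^m \ge y$ on $[0,1]$ reverses, so Step 2 does not extend and the estimate $\kappa_m \ge 3\tilde\kappa_m - 1$ is no longer manifestly positive. To close this regime I would seek an independent argument---either a direct spectral analysis of the three-site long-range generator (in the spirit of the appendix proof of Lemma \ref{lem:kappa1}) showing $\tilde\kappa_m > 1/3$ for $m > 1$, or a quantitative ergodicity argument exploiting that $(x_i+x_j)^m$ vanishes only on a $\nu^\ga_{1/3,3}$-null subset of the simplex boundary, where the Gamma density itself vanishes.
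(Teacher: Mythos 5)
Your Steps 1--3 are sound, and in fact for $m\le 1$ they give a slightly cleaner route than the paper: the paper first proves $\kappa_1\ge 3\tilde\kappa_1-1$ by expanding $E_\nu[(x_i+x_j)(D_{i,j}f)^2]=E_\nu[(x_i+x_j)f^2]-E_\nu[(x_i+x_j)(E_{i,j}f)^2]$ and using $x_1+x_2+x_3=1$, then transfers to $m\le 1$ by monotonicity of $(x_i+x_j)^m$ in $m$; your direct bound $E_\nu[(x_1+x_3)^m(D_{1,3}f)^2]\le E_\nu[(D_{1,3}f)^2]\le E_\nu[f^2]$ yields $\kappa_m\ge 3\tilde\kappa_m-1$ for every $m\ge 0$ in one line, which combined with your Step 2 settles $m\le 1$.

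However, the corollary asserts $\kappa_m>0$ for \emph{all} $m\ge 0$, and your proposal leaves the regime $m>1$ genuinely open: you name it as the main obstacle and only sketch two possible routes, neither of which closes it. The first route cannot work: $\tilde\kappa_m$ is nonincreasing in $m$ (since $x_i+x_j\le 1$ on $\mathcal{S}_{1/3,3}$), and for any fixed nonconstant $f$ dominated convergence gives $\D^{*,m}_{LR}(f)\to 0$ as $m\to\infty$, so $\tilde\kappa_m\to 0$ and the inequality $\tilde\kappa_m>\frac13$ fails for large $m$; hence $3\tilde\kappa_m-1$ is eventually negative and your Step 3 bound becomes vacuous. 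The second route (a ``quantitative ergodicity'' argument near the boundary) is not an argument yet. The paper closes this case differently, by a H\"older comparison between the $m=1$ and general-$m$ Dirichlet forms: with $\frac1m+\frac1{m'}=1$,
\begin{equation*}
E_\nu[(x_1+x_2)(D_{1,2}f)^2]+E_\nu[(x_2+x_3)(D_{2,3}f)^2]\le \bigl(\D^{*,m}(f)\bigr)^{1/m}\bigl(2E_\nu[f^2]\bigr)^{1/m'},
\end{equation*}
which together with $\kappa_1 E_\nu[f^2]\le \D^{*,1}(f)$ gives $\kappa_m\ge \kappa_1^{\,m'}/2>0$. Without this (or some substitute) your proof of the first displayed inequality of the corollary is incomplete for $m>1$.
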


\begin{pf}
Recall that $\kappa_m=\lambda^{*,m}(\frac{1}{3},3)$ and $\tilde
{\kappa}_m=\lambda^{*,m}_{\mathrm{LR}}(\frac{1}{3},3)$.
By the explicit expressions,
\begin{eqnarray*}
\D^{*}(f) & =& \bigl( E_{\nu} \bigl[(x_1+x_2)^m
( D_{1,2}f ) ^2 \bigr] + E_{\nu
}
\bigl[(x_2+x_3)^m ( D_{2,3}f )
^2 \bigr] \bigr),
\\
\D^{*}_{\mathrm{LR}}(f) & =&\tfrac{1}{3} \bigl(
E_{\nu} \bigl[(x_1+x_2)^m (
D_{1,2}f ) ^2 \bigr] + E_{\nu}
\bigl[(x_2+x_3)^m ( D_{2,3}f )
^2 \bigr]
\\
&&\hspace*{122pt}{} + E_{\nu} \bigl[(x_1+x_3)^m (
D_{1,3 }f ) ^2 \bigr] \bigr)
\end{eqnarray*}
for all $f \in L^2(\nu)$ where $\nu=\nu_{{1}/{3},3}^{\ga}$,
the inequality $3 \tilde{\kappa}_m \ge\kappa_m$ follows directly.
Note that $\nu$ does not depend on $m$.

Next, we show that $\kappa_1 > 0$. By the definition, for any $f \in
L^2(\nu)$ satisfying $E_{\nu}[f]=0$, we have
\begin{eqnarray*}
&& 3 \tilde{\kappa}_1 E_{\nu}\bigl[f^2\bigr]
\\
&&\qquad\le E_{\nu} \bigl[(x_1+x_2) (
D_{1,2}f ) ^2 \bigr] + E_{\nu}
\bigl[(x_2+x_3) ( D_{2,3}f ) ^2
\bigr]
\\
&&\qquad\quad{}+ E_{\nu} \bigl[(x_1+x_3) (
D_{1,3 }f ) ^2 \bigr].
\end{eqnarray*}
Noting $ E_{\nu}[ (x_i+x_j)(D_{i,j}f)^2 ]= E_{\nu}[(x_i+x_j) f^2 ] -
E_{\nu}[(x_i+x_j)(E_{i,j}f)^2 ]$, we have
\begin{eqnarray*}
&& E_{\nu}\bigl[ (x_1+x_2)
(E_{1,2}f)^2 \bigr] + E_{\nu}\bigl[(x_2+x_3)
(E_{2,3}f)^2 \bigr] + E_{\nu}
\bigl[(x_1+x_3) (E_{1,3}f)^2 \bigr]
\\
&&\qquad\le(2 -3\tilde{\kappa}_1 ) E_{\nu}
\bigl[f^2\bigr]
\end{eqnarray*}
since for any $x= (x_1,x_2,x_3) \in\mathcal{S}_{{1}/{3},3}$,
$x_1+x_2+x_3=1$. Therefore,
\begin{eqnarray*}
&&E_{\nu}\bigl[ (x_1+x_2) (E_{1,2}f)^2
\bigr] + E_{\nu}\bigl[(x_2+x_3)
(E_{2,3}f)^2 \bigr]
\\
&&\qquad\le\bigl(1 - (3\tilde{
\kappa}_1 -1) \bigr) E_{\nu}\bigl[f^2\bigr]
\\
&&\qquad\le E_{\nu}\bigl[(x_1+x_2)f^2
\bigr] + E_{\nu}\bigl[(x_2+x_3)f^2
\bigr] - (3\tilde{\kappa}_1 -1)E_{\nu}\bigl[f^2
\bigr]
\end{eqnarray*}
which implies $(3\tilde{\kappa}_1 -1)E_{\nu}[f^2] \le E_{\nu}
[(x_1+x_2) ( D_{1,2}f ) ^2 ] + E_{\nu} [(x_2+x_3) ( D_{2,3}f ) ^2 ]$
and hence $\kappa_1 \ge3\tilde{\kappa}_1 -1$. Then, by Lemma~\ref
{lem:kappa1}, $\kappa_1 >0$ follows.

Now, since $(x_i+x_j)^m$ is decreasing in $m$ for any fixed $x \in
\mathcal{S}_{{1}/{3},3}$ and $1 \le i < j \le3$, we have $\kappa
_m$ and $\tilde{\kappa}_m$ are both decreasing in $m$. Therefore, for
any $m \le1$, $\kappa_m >0$ and $\tilde{\kappa}_m > \frac{1}{3}$ holds.

On the other hand, for $m >1$, by H\"older's inequality,
\begin{eqnarray*}
\hspace*{-5pt}&&E_{\nu}\bigl[(x_1+x_2) (D_{1,2}f)^2
\bigr] + E_{\nu}\bigl[(x_2+x_3)
(D_{2,3}f)^2 \bigr]
\\
\hspace*{-5pt}&&\qquad\le E_{\nu}\bigl[(x_1+x_2)^m
(D_{1,2}f)^2 \bigr]^{{1}/{m}}E_{\nu
}
\bigl[(D_{1,2}f)^2 \bigr]^{{1}/{m'}}
\\
\hspace*{-5pt}&&\qquad\quad{} + E_{\nu}\bigl[(x_2+x_3)^m
(D_{2,3}f)^2 \bigr]^{{1}/{m}}E\bigl[(D_{2,3}f)^2
\bigr]^{{1}/{m'}}
\\
\hspace*{-5pt}&&\qquad\le\bigl\{ E_{\nu}\bigl[(x_1+x_2)^m
(D_{1,2}f)^2 \bigr] + E_{\nu}\bigl[(x_2+x_3)^m
(D_{2,3}f)^2 \bigr] \bigr\}^{{1}/{m}}
\\
\hspace*{-5pt}&&\qquad\quad{}\times\bigl\{ E_{\nu}\bigl[ (D_{1,2}f)^2
\bigr] + E_{\nu}\bigl[(D_{2,3}f)^2 \bigr] \bigr
\}^{{1}/{m'}}
\\
\hspace*{-5pt}&&\qquad\le\bigl\{ E_{\nu}\bigl[(x_1+x_2)^m
(D_{1,2}f)^2 \bigr] + E_{\nu}\bigl[(x_2+x_3)^m
(D_{2,3}f)^2 \bigr] \bigr\}^{{1}/{m}} \bigl\{ 2
E_{\nu}\bigl[f^2 \bigr] \bigr\}^{{1}/{m'}},
\end{eqnarray*}
where $\frac{1}{m}+\frac{1}{m'}=1$. For the second inequality, we use
the inequality $a^{1/m}b^{1/m'}+c^{1/m}d^{1/m'} \le
(a+c)^{1/m}(b+d)^{1/m'}$ for any nonnegative numbers $a,b,c$ and $d$,
which is obtained by H\"older's inequality for a two-point space
equipped with the counting measure.

Then, combining the above inequality with the fact that $\kappa
_1=\lambda^*(\frac{1}{3},3)$, we have
\[
\kappa_1 E_{\nu}\bigl[f^2\bigr] \le
2^{{1}/{m'}} \bigl( \D^{*,m}(f) \bigr)^{{1}/{m}} \bigl(
E_{\nu}\bigl[f^2 \bigr] \bigr)^{{1}/{m'}}
\]
which implies $\kappa_m \ge\frac{\kappa_1 ^ {m'}}{2} >0$.
\end{pf}

%
\begin{remark}
For the following proof of Theorem~\ref{thm:main}, the sharp estimate
$\tilde{\kappa}_1 > \frac{1}{3}$ is essential for the case $0 <
m<1$, but the weaker condition $\tilde{\kappa}_1 >0$ is sufficient
for the case $m \ge1$. To prove $\tilde{\kappa}_1 >0$, we can avoid
the complicated argument in the \hyperref[app]{Appendix} and instead
use a simpler
argument, for example, the one used in Section~4.2 of \cite{BCTV}.
\end{remark}

\begin{pf*}{Proof of Theorem~\ref{thm:main}}
Combining Theorem~\ref{thm:0}, Theorem~\ref{thm:convex} and
Corollary~\ref{cor:kappa}, for any $m \ge1$, there exists a positive constant
$C=C(m,\ga)$ such that
%
%
\begin{equation}
\label{ineq:m1} \lambda^{*,m}_{\mathrm{LR}}(\E,N) \ge C \E^m
\end{equation}
for all $\E>0$ and $N \ge2$. Then, applying Proposition~\ref
{prop:compm2m} and Corollary~\ref{cor:kappa}, for any $\frac{1}{2}
\le m <1$,
\[
\lambda^{*,m}_{\mathrm{LR}}(\E,N) \ge\E^m \sqrt{ \biggl(
(3\tilde{\kappa}_m-1) \biggl(1-\frac{2}{N}\biggr)+
\frac{1}{N} \biggr) C(2m, \ga)} \ge C(m,\ga) \E^m,
\]
where $C(2m,\ga)$ is the constant in the inequality (\ref{ineq:m1}) and
\[
C(m,\ga)=\sqrt{C(2m, \ga) } \inf_{N \ge2} \sqrt{ \biggl( (3
\tilde{\kappa}_m-1) \biggl(1-\frac{2}{N}\biggr)+
\frac{1}{N} \biggr) } >0.
\]
Repeating the same argument, we have for any $\frac{1}{2^{k+1}} \le m
<\frac{1}{2^k}$ with\vspace*{1pt} some $k \in\N$, there exists a positive
constant $C=C(m,\ga)$ such that
\[
\lambda^{*,m}_{\mathrm{LR}}(\E,N) \ge C \E^m
\]
for all $\E>0$ and $N \ge2$. Therefore, it holds for any $m >0$ and
also for $m=0$ by Theorem~\ref{thm:0}.

Now, combining this inequality with Proposition~\ref{prop:general} and
Theorem~\ref{thm:compare}, we complete the proof of Theorem~\ref{thm:main}.
\end{pf*}

%
\begin{remark}
We can also consider the process generated by $\mathcal{L}_{\mathrm
{LR}}^{*,m}$
with negative $m$. However, for this case, the statement ``$\lambda
^{*,m}_{\mathrm{LR}}(\E,N) \ge C\E^m$ for\vspace*{1pt} some positive constant
$C$'' or the equivalent statement ``$\lambda^{*,m}_{\mathrm{LR}}(1,N)
\ge C$ for some positive constant $C$'' turns out to be false.
Actually, fix $\E=1$ and consider the function $f_N(x)={\mathbf
{1}}_{\{x_1 > {N}/{2}\}} \in L^2(\nu_{1,N})$, then
\begin{eqnarray*}
E\bigl[(x_i+x_j)^m(D_{i,j}f_N)^2
\bigr] & =&0 \qquad\mbox{for } i,j \neq1,
\\
E\bigl[(x_1+x_j)^m(D_{1,j}f_N)^2
\bigr] & =&E\bigl[(x_1+x_2)^m(D_{1,2}f_N)^2
\bigr] \qquad\mbox{for } j \ge2
\end{eqnarray*}
and
\begin{eqnarray*}
&& E\bigl[ (x_1+x_2)^m (D_{1,2}f_N)^2
\bigr]
\\
&&\qquad=E\bigl[{\mathbf{1}}_{\{x_1 +x_2 > {N}/{2}\}
}(x_1+x_2)^m(D_{1,2}f_N)^2
\bigr]
\\
&&\qquad\quad{}+E\bigl[{\mathbf{1}}_{\{x_1 +x_2 \le{N}/{2}\}
}(x_1+x_2)^m(D_{1,2}f_N)^2
\bigr]
\\
&&\qquad=E\bigl[{\mathbf{1}}_{\{x_1 +x_2 > {N}/{2}\}}(x_1+x_2)^m
(D_{1,2}f_N)^2\bigr]
\\
&&\qquad\le\biggl(\frac{N}{2} \biggr)^m E
\bigl[(D_{1,2}f_N)^2\bigr] \le\biggl(
\frac
{N}{2} \biggr)^m \operatorname{Var}(f_N),
\end{eqnarray*}
where $\operatorname{Var}(f_N)$ is the variance of $f_N$. Namely,
$\lambda
^{*,m}_{\mathrm{LR}}(1,N) \le2^{-m} N ^{m}$ which means there is no uniform
spectral gap for the case where $m$ is negative.
\end{remark}



\section{Reduction to the long-range model}\label{sec3}

In this section, we estimate the spectral gap $\lambda^{*,m}(\E,N)$
from below by $\lambda^{*,m}_{\mathrm{LR}}(\E,N)$ by comparing the associated
Dirichlet forms.

We first give simple but useful lemmas.

%
\begin{lemma}
%
For any $x \in\R_+^N$,
\[
\Biggl(\sum_{i=1}^N x_i
\Biggr)^m \lambda^{*,m}\biggl(\frac{1}{N},{N}\biggr)=
\lambda^{*,m}\biggl(\frac{\sum_{i=1}^N x_i}{N},N\biggr).
\]

In particular, applying the equality for $N=3$, we have
%
%
\begin{equation}
\label{eq:scaling2} \kappa_m (a+b+c)^m =\lambda^{*,m}
\biggl(\frac{a+b+c}{3},3\biggr)
\end{equation}
for any $a,b,c >0$.
\end{lemma}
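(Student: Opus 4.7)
The plan is to derive this lemma as an immediate corollary of the scaling relation already established in Lemma \ref{lem:scaling}, namely $\la^{*,m}(\E, N) = \E^m \la^{*,m}(1, N)$ for every $\E>0$ and $N \ge 2$. Observe first that both sides of the claimed identity depend on $x \in \R_+^N$ only through $S := \sum_{i=1}^N x_i$: the left-hand side trivially so, and the right-hand side because $\la^{*,m}(\E,N)$ depends on $x$ only through the mean energy $\E = S/N$ that determines the simplex $\mathcal{S}_{\E,N}$ on which the process lives. So it suffices to verify the one-parameter identity $S^m \la^{*,m}(\tfrac{1}{N}, N) = \la^{*,m}(\tfrac{S}{N}, N)$ for all $S>0$.

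To verify this, I would simply apply Lemma \ref{lem:scaling} twice. Taking $\E = S/N$ gives $\la^{*,m}(S/N, N) = (S/N)^m \la^{*,m}(1,N)$, while taking $\E = 1/N$ gives $\la^{*,m}(1/N, N) = N^{-m} \la^{*,m}(1, N)$. Solving the second equation for $\la^{*,m}(1,N) = N^m \la^{*,m}(1/N, N)$ and substituting into the first yields $\la^{*,m}(S/N,N) = (S/N)^m \cdot N^m \la^{*,m}(1/N, N) = S^m \la^{*,m}(1/N, N)$, which is exactly the claim.

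For the corollary (\ref{eq:scaling2}), I would specialize to $N=3$ and take $x = (a,b,c)$, so $S = a+b+c$. Since $\kappa_m := \la^{*,m}(\tfrac{1}{3}, 3)$ by definition, the identity $\kappa_m (a+b+c)^m = \la^{*,m}(\tfrac{a+b+c}{3}, 3)$ follows immediately from the general statement.

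There is essentially no obstacle: the lemma is a cosmetic repackaging of Lemma \ref{lem:scaling}, isolating the factor $(\sum_i x_i)^m$ in a form convenient for later use — presumably to match it against pair factors $(x_i+x_j)^m$ arising in the Dirichlet forms $\D^*$ and $\D^*_{LR}$ when comparing the nearest-neighbor and long-range dynamics on $3$-site subsystems.
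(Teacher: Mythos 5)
Your proof is correct and follows the same route as the paper, which simply observes that the identity follows directly from the scaling relation of Lemma \ref{lem:scaling}; your two applications of that lemma (at $\E=S/N$ and $\E=1/N$) make the one-line deduction explicit. Nothing is missing.
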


\begin{pf}
It follows from Lemma~\ref{lem:scaling} directly.
\end{pf}

%
\begin{lemma}
For any $m \ge0$, $\kappa_m \le\frac{3}{2}$.
\end{lemma}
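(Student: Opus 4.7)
The plan is to produce an explicit test function $f$ on the three-site simplex $\mathcal{S}_{1/3,3}$ whose Rayleigh quotient $\mathcal{D}^{*,m}(f)/\mathrm{Var}_\nu(f)$ is at most $3/2$; since $\kappa_m$ is the infimum of this quotient over mean-zero functions, that is all we need. My first move is to reduce to $m=0$. On $\mathcal{S}_{1/3,3}$ we have $x_i+x_j = 1-x_k \le 1$, so $(x_i+x_j)^m \le 1$ for every $m\ge 0$, which gives $\mathcal{D}^{*,m}(f) \le \mathcal{D}^{*,0}(f)$ for every $f\in L^2(\nu)$ and hence $\kappa_m \le \kappa_0$. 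It therefore suffices to exhibit a single test function achieving quotient $3/2$ for the $m=0$ Dirichlet form.

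For that, I would take $f(x) = 1 - 3x_2$. Exchangeability of $\nu_{1/3,3}^\gamma$ gives $E_\nu[x_2]=1/3$, so $E_\nu[f]=0$. The non-interference property of the product Gamma distribution (used in the proof of Proposition \ref{prop:general}) says that the conditional law of $x_i/(x_i+x_j)$ given $\mathcal{F}_{i,j}$ is the symmetric Beta$(\gamma,\gamma)$, so $E_{i,j}[x_i] = (x_i+x_j)/2$. Therefore
\begin{equation*}
E_{1,2} f = 1 - \tfrac{3}{2}(x_1+x_2), \qquad E_{2,3} f = 1 - \tfrac{3}{2}(x_2+x_3),
\end{equation*}
and a two-line calculation gives $(E_{1,2}+E_{2,3})f = f/2$ — the chosen $f$ is a joint eigenfunction of $E_{1,2}+E_{2,3}$ with eigenvalue $1/2$. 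Equivalently, $D_{1,2}f = \tfrac{3}{2}(x_2-x_1)$ and $D_{2,3}f = \tfrac{3}{2}(x_2-x_3)$.

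Finally, using self-adjointness and idempotency of conditional expectations,
\begin{equation*}
\mathcal{D}^{*,0}(f) = 2E_\nu[f^2] - E_\nu\bigl[f(E_{1,2}+E_{2,3})f\bigr] = 2E_\nu[f^2] - \tfrac{1}{2}E_\nu[f^2] = \tfrac{3}{2}\,\mathrm{Var}_\nu(f),
\end{equation*}
so $\kappa_0 \le 3/2$, and combining with $\kappa_m \le \kappa_0$ finishes the proof. There is no substantive obstacle: the only non-routine step is guessing the eigenfunction, and once $f = 1-3x_2$ is written down the rest is mechanical. It is worth noting that in view of Theorem \ref{thm:0} the value $3/2$ is the $\gamma\to\infty$, $m=0$ limit of the long-range three-site gap multiplied by $3$, so the bound $\kappa_m\le 3/2$ is essentially the sharp constant produced by the simplest non-trivial eigenmode of the averaging operators.
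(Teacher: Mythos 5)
Your proof is correct, and it takes a genuinely more self-contained route than the paper. Both arguments begin the same way, reducing to $m=0$ via the monotonicity of $(x_i+x_j)^m\le 1$ on $\mathcal{S}_{1/3,3}$; but from there the paper does not compute anything on the three-site chain directly. Instead it invokes Corollary \ref{cor:kappa} to get $\kappa_0\le 3\tilde{\kappa}_0$ and then plugs in the exact long-range gap $\tilde{\kappa}_0=\frac{3\gamma+1}{3(2\gamma+1)}$ from Theorem \ref{thm:0} (quoted from \cite{GF,S}), which yields the slightly sharper, $\gamma$-dependent bound $\kappa_0\le\frac{3\gamma+1}{2\gamma+1}\le\frac{3}{2}$. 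You instead exhibit the explicit mean-zero eigenfunction $f=1-3x_2$ of $E_{1,2}+E_{2,3}$ (eigenvalue $1/2$), and your computation of $E_{i,j}f$ via the Beta$(\gamma,\gamma)$ redistribution and of $\mathcal{D}^{*,0}(f)=2E_\nu[f^2]-E_\nu[f(E_{1,2}+E_{2,3})f]=\frac{3}{2}E_\nu[f^2]$ is correct; this makes the lemma independent of Theorem \ref{thm:0} and of Corollary \ref{cor:kappa}, at the price of a $\gamma$-uniform rather than $\gamma$-improved constant. One caveat on your closing remark: the bound $3/2$ is not essentially sharp for $\kappa_0$. The antisymmetric function $g=x_1-x_3$ satisfies $(E_{1,2}+E_{2,3})g=\frac{3}{2}g$, hence $\mathcal{D}^{*,0}(g)=\frac{1}{2}E_\nu[g^2]$ and $\kappa_0\le\frac{1}{2}$ for every $\gamma$; your $f$ is an eigenmode, but not the low-lying one. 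This does not affect the validity of your proof of the stated inequality.
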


\begin{pf}
Since $\kappa_m$ is decreasing in $m$, $\kappa_m \le\kappa_0$ and
by Corollary~\ref{cor:kappa} and Theorem~\ref{thm:0}, $\kappa_0 \le
3\tilde{\kappa}_0 = \frac{3\ga+1}{2\ga+1} \le\frac{3}{2}$.
\end{pf}



To compare $\D^{*}_{\mathrm{LR}}(f)$ and $\D^{*}(f)$, we introduce operators
$\pi_{i,j}\dvtx \mathcal{S}_{\E,N} \to\mathcal{S}_{\E,N}$ which
exchange the energies of sites $i$ and $j$:
\[
(\pi_{i,j} x )_k = %
\cases{ x_k &
\quad$\mbox{if }k \neq i,j$,
\cr
x_j & \quad$\mbox{if }k=i$,
\cr
x_i & \quad$\mbox{if }k=j$, } %
\]
and $\pi_{i,i}x=x$.

Before going to the main result in this section, we prepare the
following key lemma.

%
\begin{lemma}\label{lem:moving}
There exists a universal positive constant $C$ such that for any $m \ge
0$, $\E> 0$, $N \ge2$ and $1 \le i<j \le N$,
%
%
\begin{eqnarray}
\label{eq:moving!!} &&\kappa_m E_{ \nu_{\E,N}}\bigl[x_i^m
( f \circ\pi_{i,j} -f )^2 \bigr]
\nonumber
\\[-8pt]
\\[-8pt]
&&\qquad\le C|j-i| \sum
_{k=i}^{j-1}E_{ \nu_{\E,N}}\bigl[
(x_k+x_{k+1})^m (D_{k,k+1}f)^2
\bigr]\nonumber
\end{eqnarray}
holds for all $f \in L^2( \nu_{\E,N})$.
\end{lemma}

\begin{pf}
Following the strategy used in the study of the spectral gap for
multi-species exclusion processes \cite{NS}, we express the exchange
$\pi_{i,j}$ with rate $x_i^m$ by a sequence of neighboring exchange
$\pi_{k,k+1}$ with rate $x_k^m$ or $x_{k+1}^m$, and $\pi_{k,k+2}$
with rate $x_k^m$ or $x_{k+2}^m$. More precisely, for $i<j$, we denote
$K=j-i$ and define a sequence of sites $n_0=i,n_1,n_2, \ldots,
n_{4K-3}$ by
\[
n_k = %
\cases{ i+k & \quad$\mbox{if }0 \le k \le K $,
\cr
j-2-l & \quad$\mbox{if }k=K+2l+1, 0 \le l \le K-2$,
\cr
j-l & \quad$\mbox{if
}k=K+2l, 1 \le l \le K-1$,
\cr
i+k-3K+3 & \quad$\mbox{if }3K-1 \le k \le4K-3$. }
\]
We define operators $S_k\dvtx \mathcal{S}_{\E,N} \to\mathcal
{S}_{\E
,N}$ for $ 0 \le k \le4K-3$ by $S_0=\mathit{Id}$ and $S_{k+1} = \pi
_{n_k,n_{k+1}} \circ S_k $ for $ 0 \le k \le4K-4$. By the
construction, $S_{4K-3}=\pi_{i,j}$ and for all $ 0 \le k \le4K-3$,
$(S_kx)_{n_k}=x_i$ and $|n_k -n_{k+1}|=1$ or $2$. We emphasize that the
use of transitions $\pi_{k,k+2}$ is necessary since due to that, we
have the crucial relation $(S_kx)_{n_k}=x_i$. Also, it is precisely
because of these extra transitions that we are forced to have the
factor $\kappa_m$ in (\ref{eq:moving!!}).

Then, by the Schwarz inequality,
\begin{eqnarray*}
&&E_{ \nu_{\E,N}} \bigl[x_i^m \bigl\{f(
\pi_{i,j}x )-f( x )\bigr\}^2 \bigr]
\\
&&\qquad \le(4K-3) \sum
_{k=0}^{4K-4} E_{ \nu_{\E,N}}
\bigl[x_i^m \bigl\{f(S_{k+1}x )-f(
S_kx )\bigr\}^2\bigr]
\\
&&\qquad= (4K-3) \sum_{k=0}^{4K-4}
E_{ \nu_{\E,N}}\bigl[x_{n_k}^m \bigl\{f(\pi
_{n_k,n_{k+1}}x )-f( x )\bigr\}^2 \bigr]
\\
&&\qquad\le(4K-3) \Biggl\{ 3 \sum_{k=i}^{j-1}
E_{ \nu_{\E,N}}\bigl[x_k^m \bigl\{f(\pi
_{k,k+1}x )-f( x )\bigr\}^2 \bigr]
\\
&&\qquad\quad\hspace*{45pt} {} + \sum_{k=i}^{j-2} E_{ \nu_{\E,N}}
\bigl[x_{k+2}^m \bigl\{f(\pi_{k,k+2}x )-f( x )\bigr
\}^2 \bigr] \Biggr\}.
\end{eqnarray*}
Here, we use the fact that $i \le n_k \le j$ for all $ 0 \le k \le
4K-4$ and for each $l$ satisfying $i \le l <j$, $\sharp\{ k; \{
n_k,n_{k+1}\} =\{l,l+1\} \} \le3$ and $\sharp\{ k; \{n_k,n_{k+1}\} =\{
l,l+2\} \} \le1$ by the construction. We also use the invariance of
$\nu_{\E,N}$ under the permutation of coordinates, which we will use
repeatedly without notice.

Now, since $E_{k,k+1}f(\pi_{k,k+1}x)=E_{k,k+1}f(x)$, we obtain that
%
%
\begin{eqnarray*}
&&E_{ \nu_{\E,N}}\bigl[ x_k^m \bigl\{f(
\pi_{k,k+1}x )-f( x )\bigr\}^2 \bigr]
\\
&&\qquad= E_{ \nu_{\E,N}}\bigl[ x_k^m \bigl\{f(
\pi_{k,k+1}x )-(E_{k,k+1}f) (\pi_{k,k+1}x)+(E_{k,k+1}f)
(x)- f( x )\bigr\}^2 \bigr]
\\
&&\qquad\le2 E_{ \nu_{\E,N}}\bigl[ x_{k+1}^m\bigl\{f(x
)-(E_{k,k+1}f) (x) \bigr\}^2\bigr]
\\
&&\qquad\quad{}+ 2E_{ \nu_{\E,N}}\bigl[
x_k^m \bigl\{(E_{k,k+1}f) (x)- f( x )\bigr
\}^2 \bigr]
\\
&&\qquad\le4 E_{ \nu_{\E,N}}\bigl[ (x_k+x_{k+1})^m
(D_{k,k+1}f)^2\bigr].
\end{eqnarray*}
%
%
By the same argument,
\[
E_{ \nu_{\E,N}}\bigl[ x_{k+2}^m \bigl\{f(
\pi_{k,k+2}x )-f( x )\bigr\}^2 \bigr] \le4 E_{ \nu_{\E,N}}
\bigl[ (x_k+x_{k+2})^m (D_{k,k+2}f)^2
\bigr]. 
\]
Then, by the definition of the spectral gap and the scaling relation
(\ref{eq:scaling2}), we have
\begin{eqnarray*}
&&\kappa_m E_{\nu_{\E,N}}\bigl[ (x_k+x_{k+1}+x_{k+2})^m
(D_{k,k+2}f)^2| \F_{k,k+1,k+2}\bigr]
\\
&&\qquad= \lambda^{*,m}\biggl(\frac{x_k+x_{k+1}+x_{k+2}}{3},3\biggr)
E_{\nu_{\E
,N}}\bigl[(D_{k,k+2}f)^2| \F_{k,k+1,k+2}
\bigr]
\\
&&\qquad= \lambda^{*,m}\biggl(\frac{x_k+x_{k+1}+x_{k+2}}{3},3\biggr)
E_{\nu_{\E
,N}}\bigl[(E_{k,k+2}f - f )^2|
\F_{k,k+1,k+2}\bigr]
\\
&&\qquad\le\lambda^{*,m}\biggl(\frac
{x_k+x_{k+1}+x_{k+2}}{3},3\biggr)
E_{\nu_{\E
,N}}\bigl[(E_{k,k+1,k+2}f - f )^2|
\F_{k,k+1,k+2}\bigr]
\\
&&\qquad\le E_{\nu_{\E,N}}\bigl[ (x_k+x_{k+1})^m
(D_{k,k+1}f)^2| \F_{k,k+1,k+2}\bigr]
\\
&&\qquad\quad{} + E_{ \nu_{\E,N}}\bigl[ (x_{k+1}+x_{k+2})^m
(D_{k+1,k+2}f)^2| \F_{k,k+1,k+2}\bigr],
\end{eqnarray*}
where $E_{i,j,k}f=E_{\nu}[f| \F_{i,j,k}]$ and $\F_{i,j,k}$ is the
$\sigma$-algebra generated by variables $\{x_l \}_{l \neq i,j,k}$.
At the first inequality, we use the relation that
\begin{eqnarray*}
&&E_{\nu_{\E,N}}\bigl[ (E_{i,j,k}f-f)^2 | \F_{i,j,k}
\bigr]
\\
&&\qquad= E_{\nu_{\E,N}}\bigl[ (E_{i,j,k}f-E_{i,j}f)^2
| \F_{i,j,k}\bigr] + E_{\nu_{\E,N}}\bigl[ (E_{i,j}f-f)^2
| \F_{i,j,k}\bigr].
\end{eqnarray*}
Then, by taking the expectation, we have
%
%
\begin{eqnarray}
\label{eq:2-estimate} %
&& \kappa_m E_{ \nu_{\E,N}}
\bigl[ (x_k+x_{k+1}+x_{k+2})^m
(D_{k,k+2}f)^2\bigr]
\nonumber
\\
&&\qquad\le E_{ \nu_{\E,N}}\bigl[ (x_k+x_{k+1})^m
(D_{k,k+1}f)^2\bigr]
\\
&&\qquad\quad{}+ E_{ \nu
_{\E,N}}\bigl[
(x_{k+1}+x_{k+2})^m (D_{k+1,k+2}f)^2
\bigr].
\nonumber
%
\end{eqnarray}
Then, combing the inequalities, we have
%
%
\begin{eqnarray*}
&& \kappa_m E_{ \nu_{\E,N}}\bigl[x_i^m ( f
\circ\pi_{i,j} -f )^2 \bigr]
\\
&&\qquad\le\kappa_m (4K-3) \Biggl\{ 3 \sum
_{k=i}^{j-1} E_{ \nu_{\E,N}}\bigl[x_k^m
\bigl\{ f(\pi_{k,k+1}x )-f( x )\bigr\}^2 \bigr]
\\
&&\hspace*{91pt}{} + \sum_{k=i}^{j-2} E_{ \nu_{\E,N}}
\bigl[x_{k+2}^m \bigl\{f(\pi_{k,k+2}x )-f( x )\bigr
\}^2 \bigr] \Biggr\}
\\
&&\qquad\le\kappa_m (4K-3) 12 \sum_{k=i}^{j-1}
E_{ \nu_{\E
,N}}\bigl[(x_k+x_{k+1})^m
(D_{k,k+1}f)^2 \bigr]
\\
&&\qquad\quad{} + 8 (4K-3) \sum_{k=i}^{j-1}
E_{ \nu_{\E,N}}\bigl[ (x_k+x_{k+1})^m
(D_{k,k+1}f)^2\bigr]
\\
&&\qquad\le|j-i| (48 \kappa_m + 32) \sum
_{k=i}^{j-1} E_{ \nu_{\E
,N}}\bigl[(x_k+x_{k+1})^m
(D_{k,k+1}f)^2 \bigr]
\\
&&\qquad\le104 |j-i| \sum_{k=i}^{j-1}
E_{ \nu_{\E,N}}\bigl[(x_k+x_{k+1})^m
(D_{k,k+1}f)^2 \bigr],
\end{eqnarray*}
%
%
where we use $\kappa_m \le\frac{3}{2}$ in the last inequality.
\end{pf}

%
\begin{remark}
The argument used for the estimate (\ref{eq:2-estimate}) is exactly
the same as the one used in Section~4.2 of \cite{BCTV}. The method is
also used in the proof of Theorem~\ref{thm:convex} in the next section.
\end{remark}

Next is the main result in this section, which allows us to compare the
Dirichlet forms associated with the nearest neighbor interaction model
and the long-range interaction model.

%
\begin{proposition}\label{prop:moving}
There exists a positive constant $C$ depending only on $m$ such that
for any $m \ge0$, $\E>0$, $N \ge2$ and $1 \le i < j \le N$,\vspace*{-1pt}
\[
\kappa_m E_{ \nu_{\E,N}}\bigl[ (x_i+x_j)^m
(D_{i,j}f)^2 \bigr] \le C |j-i| \sum
_{k=i}^{j-1} E_{ \nu_{\E,N}}\bigl[
(x_k+x_{k+1})^m (D_{k,k+1}f)^2
\bigr]\vspace*{-1pt}
\]
holds for all $f \in L^2( \nu_{\E,N})$.
\end{proposition}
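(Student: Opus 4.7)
My plan is to decompose $f$ into its symmetric and antisymmetric parts under the involution $\pi^{i,j}$, handle the antisymmetric part using Lemma \ref{lem:moving} directly, and handle the symmetric part via the three--site spectral gap combined with an iteration. Write $f = f_+ + f_-$ with $f_\pm = \tfrac12(f \pm f\circ\pi^{i,j})$. Since $\nu_{\E,N}$ is invariant under $\pi^{i,j}$, $(x_i+x_j)^m$ and $D_{i,j}f_+$ are $\pi^{i,j}$-symmetric, and $f_-$ is $\pi^{i,j}$-antisymmetric, the cross term in the expansion vanishes, and using $D_{i,j}f = D_{i,j}f_+ - f_-$ (since $E_{i,j}f_- = 0$ by the symmetry of $\mu^{\ga}$ under $\a \mapsto 1-\a$), I obtain
\begin{equation*}
E_{\nu_{\E,N}}[(x_i+x_j)^m (D_{i,j}f)^2] = E_{\nu_{\E,N}}[(x_i+x_j)^m (D_{i,j}f_+)^2] + E_{\nu_{\E,N}}[(x_i+x_j)^m f_-^2].
\end{equation*}

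For the antisymmetric part, I would apply $(x_i+x_j)^m \le 2^m(x_i^m+x_j^m)$ and the $\pi^{i,j}$-invariance of the integrand to bound $E_{\nu_{\E,N}}[(x_i+x_j)^m f_-^2]$ by a constant multiple of $E_{\nu_{\E,N}}[x_i^m(f-f\circ\pi^{i,j})^2]$, to which Lemma \ref{lem:moving} applies directly and yields a bound of order $|j-i|/\kappa_m$ times the desired sum of nearest--neighbor Dirichlet forms.

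For the symmetric part, I would apply the three--site spectral gap, conditionally on $\F_{i,i+1,j}$, to the triple $(i,i+1,j)$. By the non--interference property of $\nu_{\E,N}^{\ga}$, this conditional gap equals $\lambda^{*,m}((x_i+x_{i+1}+x_j)/3, 3) = \kappa_m(x_i+x_{i+1}+x_j)^m$ via \eqref{eq:scaling2}; together with $(x_i+x_j)^m \le (x_i+x_{i+1}+x_j)^m$, this yields
\begin{equation*}
\kappa_m E_{\nu_{\E,N}}[(x_i+x_j)^m (D_{i,j}f_+)^2] \le E_{\nu_{\E,N}}[(x_i+x_{i+1})^m (D_{i,i+1}f_+)^2] + E_{\nu_{\E,N}}[(x_{i+1}+x_j)^m (D_{i+1,j}f_+)^2].
\end{equation*}
Writing $D_{p,q}f_+ = \tfrac12(D_{p,q}f + D_{p,q}(f\circ\pi^{i,j}))$, applying Cauchy--Schwarz, and using the change of variable $x \mapsto \pi^{i,j}x$ (which preserves $\nu_{\E,N}$ and conjugates $D_{i,b}$ into $D_{j,b}$), each term on the right is bounded in terms of Dirichlet forms of $f$ itself. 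Iterating the same procedure on the remaining long--range term (decomposing under $\pi^{p,j}$ at the current left endpoint $p$, treating the antisymmetric remainder by Lemma \ref{lem:moving}, applying the three--site gap to the symmetric piece for the triple $(p,p+1,j)$) peels off one nearest--neighbor Dirichlet contribution plus a Lemma \ref{lem:moving} contribution per level, reducing the long--range distance by one each time.

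The principal obstacle will be to prevent the $\kappa_m^{-1}$ factors introduced at each three--site gap application from accumulating into exponential growth in $|j-i|$. The resolution is the same mechanism that powers the proof of Lemma \ref{lem:moving}: the single $\kappa_m$ on the left--hand side of the proposition must be distributed \emph{termwise} across the three--site gap applications, so that only one net $\kappa_m^{-1}$ is ever lost, with the universal bound $\kappa_m \le \tfrac{3}{2}$ discharging the remaining constants. This absorption principle yields a final constant depending only on $m$, in parallel with the conclusion of Lemma \ref{lem:moving}.
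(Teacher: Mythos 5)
Your opening steps are sound: the decomposition $f=f_++f_-$ under $\pi^{i,j}$, the identity $E_{i,j}f_-=0$ (by non-interference the conditional law of $(x_i,x_j)$ given $\F_{i,j}$ is exchangeable), the vanishing of the cross term, and the treatment of the antisymmetric part via $(x_i+x_j)^m\le 2^m(x_i^m+x_j^m)$, the swap-invariance of the measure and Lemma \ref{lem:moving} are all correct. The genuine gap is in the symmetric part. Each application of the three-site gap to the triple $(p,p+1,j)$ yields
\begin{equation*}
\kappa_m\, E_{\nu_{\E,N}}\bigl[(x_p+x_j)^m(D_{p,j}f_+)^2\bigr]\le E_{\nu_{\E,N}}\bigl[(x_p+x_{p+1})^m(D_{p,p+1}f_+)^2\bigr]+E_{\nu_{\E,N}}\bigl[(x_{p+1}+x_j)^m(D_{p+1,j}f_+)^2\bigr],
\end{equation*}
and after your Cauchy--Schwarz and change of variables the right-hand side contains a \emph{full}, un-weighted long-range term $E_{\nu_{\E,N}}[(x_{p+1}+x_j)^m(D_{p+1,j}f)^2]$ at distance $j-p-1$. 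To continue you must divide by $\kappa_m$ and repeat, so after the $j-i-1$ levels of your recursion the accumulated prefactor is of order $\kappa_m^{-(j-i-1)}$. Since $\kappa_m$ is only known to be positive and is decreasing in $m$ (nothing in the paper gives $\kappa_m\ge 1$; for large $m$ it is certainly below $1$), this is exponentially large in $|j-i|$, hence in $N$, and destroys the desired linear-in-$|j-i|$ bound and the eventual $N^{-2}$ estimate. The ``absorption principle'' you invoke is not an argument: there is no mechanism by which a single prefactor on the left can be ``distributed termwise'' over gap applications that are nested, because each application is applied to the output of the previous one, and the factors necessarily multiply. The analogy with Lemma \ref{lem:moving} is misleading precisely because there the telescoping/Schwarz decomposition of $\pi_{i,j}$ into neighboring and next-nearest swaps is performed \emph{before} any gap inequality is used, and the three-site gap is then applied exactly once to each summand (to handle the $\pi_{k,k+2}$ terms); nothing is fed back into another gap application, so only one net factor of $\kappa_m$ is lost.

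The paper avoids your recursion altogether: it writes $E_{\nu_{\E,N}}[(x_i+x_j)^m(D_{i,j}f)^2]$ via the transition representation, splits $(x_i+x_j)^m\le 2^m(x_i^m+x_j^m)$, and decomposes $f(T_{i,j,\a}x)-f(x)$ into three increments obtained by conjugating the nearest-neighbour exchange at the bond $(j-1,j)$ with the single swap $\pi_{i,j-1}$; one application of Schwarz then reduces everything to (a) one nearest-neighbour Dirichlet term at $(j-1,j)$ and (b) swap terms of the form $E_{\nu_{\E,N}}[x^m(f\circ\pi_{i,j-1}-f)^2]$ (plus one further splitting using $\pi_{i,j-1}=\pi_{i,j}\circ\pi_{i,j-1}\circ\pi_{j-1,j}$ for the term weighted by $x_j^m$), each of which is controlled by Lemma \ref{lem:moving}. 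If you wish to keep your symmetric/antisymmetric decomposition, the repair is to treat the symmetric part by this same conjugation-plus-Lemma-\ref{lem:moving} route rather than by iterating the three-site gap; as it stands, the iteration step is where your proof fails.
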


\begin{pf}
Since if $i=j-1$, taking $C=\frac{3}{2}$, the statement is obvious
(with the fact $\kappa_m\le\frac{3}{2}$), we assume $i <j -1$.
First, we recall that\vspace*{-1pt}
%
%
\begin{eqnarray}
\label{eq:LRproof0} &&E_{ \nu_{\E,N}} \bigl[ (x_i+x_j)^m
(D_{i,j}f)^2 \bigr]
\nonumber
\\[-1pt]
&&\qquad= E_{ \nu_{\E,N}}\biggl[
(x_i+x_j)^m \biggl\{ \int
^1_0 P^{*}(d\a) f(T_{i,j,\a}
x)-f(x) \biggr\} ^2 \biggr]
\\[-1pt]
&&\qquad = \frac{1}{2} E_{ \nu_{\E,N}}\biggl[ (x_i+x_j)^m
\int^1_0 P^{*}(d\a) \bigl\{
f(T_{i,j,\a} x)-f(x) \bigr\}^2 \biggr].
\nonumber
\end{eqnarray}
Since $ (x_i+x_j)^m \le2^m(x_i^m+x_j^m)$, the last term of (\ref
{eq:LRproof0}) is bounded from above by
%
%
\begin{eqnarray}
\label{eq:LRproof1} &&2^{m-1} E_{ \nu_{\E,N}}\biggl[ x_i^m
\int^1_0 P^{*}(d\a) \bigl\{
f(T_{i,j,\a
} x)-f(x) \bigr\}^2 \biggr]
\nonumber
\\[-9pt]
\\[-9pt]
&&\qquad{} +2^{m-1} E_{ \nu_{\E,N}}\biggl[ x_j^m
\int^1_0 P^{*}(d\a) \bigl\{
f(T_{i,j,\a} x)-f(x) \bigr\}^2 \biggr].
\nonumber
\end{eqnarray}
%


Since the second term of (\ref{eq:LRproof1}) can be estimated in the
same manner as the first term, we only estimate the first term of (\ref
{eq:LRproof1}). Rewrite the term $f(T_{i,j,\a} x)-f(x)$ as
%
%
\begin{eqnarray*}
&&f(T_{i,j,\a} x)-f(x)
\\
&&\qquad= \bigl\{f\bigl(\pi_{i,j-1}\bigl(
T_{j-1,j,\a}( \pi_{i,j-1}x )\bigr)\bigr)-f\bigl( T_{j-1,j,\a}(
\pi_{i,j-1}x )\bigr)\bigr\}
\\
&&\qquad\quad{}+ \bigl\{f\bigl(T_{j-1,j,\a}( \pi_{i,j-1}x )\bigr)-f(
\pi_{i,j-1}x )\bigr\} + \bigl\{f( \pi_{i,j-1}x )-f(x)\bigr\}.
\end{eqnarray*}
%
%
Then, using the Schwarz inequality, we can bound the first term of (\ref
{eq:LRproof1}) from above by
%
%
\begin{eqnarray}
\label{schwarz} %
&& E_{ \nu_{\E,N}}\biggl[
x_i^m \int^1_0
P^{*}(d\a) \bigl\{ f\bigl(\pi_{i,j-1}\bigl( T_{j-1,j,\a}(
\pi_{i,j-1}x )\bigr)\bigr)-f\bigl(T_{j-1,j,\a}( \pi_{i,j-1}x
)\bigr) \bigr\} ^2\biggr]
\nonumber
\\
&&\qquad{} + E_{ \nu_{\E,N}}\biggl[ x_i^m \int
^1_0 P^{*}(d\a) \bigl\{ f
\bigl(T_{j-1,j,\a}( \pi_{i,j-1}x )\bigr)-f( \pi_{i,j-1}x )
\bigr\} ^2 \biggr]
\\
&&\qquad{} + E_{ \nu_{\E,N}}\biggl[ x_i^m \int
^1_0 P^{*}(d\a) \bigl\{ f(
\pi_{i,j-1}x )-f(x)\bigr\}^2 \biggr]
\nonumber
%
\end{eqnarray}
up to constant depending only on $m$. We estimate three terms of (\ref
{schwarz}) separately.

The last term of (\ref{schwarz}) is equal to
\[
E_{ \nu_{\E,N}}\bigl[x_i^m \bigl\{f(
\pi_{i,j-1}x )-f( x )\bigr\}^2 \bigr]
\]
and, therefore, we can apply Lemma~\ref{lem:moving}.


By the change of variable, the second term of (\ref{schwarz}) is
rewritten as
\[
E_{ \nu_{\E,N}} \biggl[ x_{j-1}^m \int
^1_0 P^{*}(d\a) \bigl\{
f(T_{j-1,j,\a} x )-f( x)\bigr\}^2 \biggr]
\]
which is obviously bounded from above by
\begin{eqnarray*}
&&E_{ \nu_{\E,N}} \biggl[(x_{j-1}+x_j)^m \int
^1_0 P^{*}(d\a) \bigl\{
f(T_{j-1,j,\a} x )-f( x) \bigr\}^2 \biggr]
\\
&&\qquad= 2 E_{ \nu_{\E,N}}\bigl[(x_{j-1}+x_j)^m
(D_{j-1,j}f)^2 \bigr].
\end{eqnarray*}

Finally, we study the first term of (\ref{schwarz}). By the same way
as the second term, the term is rewritten as
%
%
\begin{eqnarray*}
&&E_{ \nu_{\E,N}} \biggl[ x_{j-1}^m \int
^1_0 P^{*}(d\a) \bigl[f\bigl(
\pi_{i,j-1}( T_{j-1,j,\a} x)\bigr)-f(T_{j-1,j,\a}x )
\bigr]^2 \biggr]
\\
&&\qquad{} = E_{ \nu_{\E,N}} \bigl[ x_{j-1}^m
E_{ \nu_{\E,N}} \bigl[(f \circ\pi_{i,j-1}-f )^2 |
\F_{j-1,j}\bigr] \bigr]
\end{eqnarray*}
%
%
and since $x_{j-1}+x_j$ is measurable with respect to $\F_{j-1,j}$,
the last expression is bounded from above by
%
%
\begin{eqnarray*}
&& E_{ \nu_{\E,N}} \bigl[ (x_{j-1}+x_j)^m
E_{ \nu_{\E,N}} \bigl[(f \circ\pi_{i,j-1}-f )^2 |
\F_{j-1,j}\bigr] \bigr]
\\
&&\qquad= E_{ \nu_{\E,N}} \bigl[ (x_{j-1}+x_j)^m
(f \circ\pi_{i,j-1}-f )^2\bigr].
\end{eqnarray*}
%
%
Then, using the trivial inequality again, we conclude that the last
term is bounded by
%
%
\begin{equation}
\label{eq:LRproof2}\quad2^m E_{ \nu_{\E,N}} \bigl[ x_{j-1}^m
(f \circ\pi_{i,j-1}-f )^2 \bigr] + 2^m
E_{ \nu_{\E,N}}\bigl[ x_j^m (f \circ
\pi_{i,j-1}-f )^2 \bigr].
\end{equation}

Though the first term of (\ref{eq:LRproof2}) is directly estimated by
Lemma~\ref{lem:moving}, we need to treat the second term carefully.
Precisely, since $\pi_{i,j-1}=\pi_{i,j} \circ\pi_{i,j-1} \circ
\pi_{j-1,j}$, we have
%
%
\begin{eqnarray*}
E_{ \nu_{\E,N}} \bigl[ x_j^m \bigl(f (
\pi_{i,j-1}x)-f(x) \bigr)^2 \bigr] &\le&3 E_{ \nu
_{\E,N}}
\bigl[ x_j^m \bigl(f (\pi_{j-1,j}x)-f(x)
\bigr)^2 \bigr]
\\
&&{}+ 3E_{ \nu_{\E,N}} \bigl[ x_{j-1}^m \bigl(f (
\pi_{i,j-1}x)-f(x) \bigr)^2 \bigr]
\\
&&{} + 3 E_{
\nu_{\E,N}}
\bigl[ x_{i}^m \bigl(f (\pi_{i,j}x)-f(x)
\bigr)^2 \bigr].
\end{eqnarray*}
%
%
Then we can apply Lemma~\ref{lem:moving} to complete the
proof.
\end{pf}



\begin{pf*}{Proof of Theorem~\ref{thm:compare}}
Noting the explicit expressions of $D^{*} (f)$ and $D^{*}_{\mathrm
{LR}}(f)$, by
Proposition~\ref{prop:moving}, we have
\[
\kappa_m D^{*}_{\mathrm{LR}} (f) \le\frac{C}{N}
\sum_{i<j} |j-i| \sum_{k=i}^{j-1}
E_{ \nu_{\E,N}}\bigl[ (x_k+x_{k+1})^m
(D_{k,k+1}f)^2\bigr] \le C' N^2
D^{*}(f)
\]
for any $f \in L^2(\nu_{\E,N})$ with positive constants $C$ and $C'$
depending only on $m$. Then, by the definition of the spectral gap, the
proof is complete.
\end{pf*}

\section{Spectral gap for the long-range model}
\label{sec:long-range}

In this section, to show that $\inf_N \lambda^{*,m}_{\mathrm
{LR}}(1,N) >0$, we
give a proof of Theorem~\ref{thm:convex} and Proposition~\ref{prop:compm2m}.

\begin{pf*}{Proof of Theorem~\ref{thm:convex}}
By the definition of the spectral gap, it is sufficient to show that
for all $\E> 0$, $N \ge2$ and $f \in L^2 ( \nu_{\E,N})$,
\[
\kappa_m \frac{1}{N} \sum_{i<j}
E_{ \nu_{\E,N}}\bigl[(D_{i,j}f)^2\bigr] \le2
\E^{-m} \frac{1}{N} \sum_{i<j}
E_{ \nu_{\E,N}}\bigl[(x_i+x_j)^m(D_{i,j}f)^2
\bigr]
\]
holds.

For $m \ge1$, since $x^m$ is a convex function, we have $\E^m \le
\frac{1}{N} \sum_{k=1}^N x_k^m$ and, therefore,
\[
\frac{1}{N} \sum_{i<j} E_{ \nu_{\E,N}}
\bigl[(D_{i,j}f)^2\bigr] \le\frac
{1}{N} \sum
_{i<j} \sum_{k=1}^N
\frac{\E^{-m}}{N} E_{ \nu_{\E
,N}}\bigl[x_k^m(D_{i,j}f)^2
\bigr].
\]
If $k \neq i, j$, we have
\begin{eqnarray*}
E_{ \nu_{\E,N}} \bigl[x_k^m(D_{i,j}f)^2
\bigr] &=& E_{ \nu_{\E
,N}}\bigl[x_k^m(E_{i,j}f-f)^2
\bigr]
\\
& \le& E_{ \nu_{\E,N}}\bigl[(x_i+x_j+x_k)^m(E_{i,j}f-f)^2
\bigr]
\\
& =& E_{ \nu_{\E,N}}\bigl[(x_i+x_j+x_k)^m
E_{ \nu_{\E,N}}\bigl[ (E_{i,j}f-f)^2 | \F_{i,j,k}
\bigr] \bigr]
\\
& \le& E_{ \nu_{\E,N}}\bigl[(x_i+x_j+x_k)^m
E_{ \nu_{\E,N}}\bigl[ (E_{i,j,k}f-f)^2 | \F_{i,j,k}
\bigr] \bigr],
\end{eqnarray*}
where we use the relation
\begin{eqnarray*}
E_{ \nu_{\E,N}}\bigl[ (E_{i,j,k}f-f)^2 | \F_{i,j,k}
\bigr] &=& E_{ \nu_{\E,N}}\bigl[ (E_{i,j,k}f-E_{i,j}f)^2
| \F_{i,j,k}\bigr]
\\
&&{}+ E_{ \nu_{\E,N}}\bigl[ (E_{i,j}f-f)^2
| \F_{i,j,k}\bigr]
\end{eqnarray*}
again.

Then, by the definition of the spectral gap for $3$-site system and the
scaling relation (\ref{eq:scaling2}),
\begin{eqnarray*}
&&\kappa_m E_{ \nu_{\E,N}} \bigl[(x_i+x_j+x_k)^m
E\bigl[ (f-E_{i,j,k}f)^2 | \F_{i,j,k}\bigr] \bigr]
\\
&&\qquad\le E_{ \nu_{\E,N}}\bigl[ (x_i+x_k)^m
(D_{i,k}f)^2 + (x_k+x_j)^m
(D_{k,j}f)^2\bigr].
\end{eqnarray*}
%

Therefore, noting $x_i^m+x_j^m \le(x_i+x_j)^m$ for $m \ge1$ and
$\kappa_m \le\frac{3}{2}$, by summing terms, we obtain
\begin{eqnarray*}
&&\frac{\kappa_m}{N} \sum_{i<j} E_{ \nu_{\E,N}}
\bigl[(D_{i,j}f)^2\bigr]
\\
&&\qquad\le\frac{ \E^{-m}}{N} \sum
_{i<j} \frac{1}{N} \biggl( \frac{3}{2} +
2(N-2) \biggr) E_{ \nu_{\E,N}}\bigl[(x_i+x_j)^m(D_{i,j}f)^2
\bigr]
\\
&&\qquad\le\frac{2\E^{-m}}{N} \sum_{i<j}
E_{ \nu_{\E
,N}}\bigl[(x_i+x_j)^m(D_{i,j}f)^2
\bigr].
\end{eqnarray*}
%
\upqed
\end{pf*}

\begin{pf*}{Proof of Proposition~\ref{prop:compm2m}}
Here, we use the idea developed by Caputo in \cite{Ca08} and
generalize it. First, recall a well-known equivalent characterization
of the spectral gap of a generator $\mathcal{L}$ in $L^2(\nu)$ as the
largest constant $\lambda$ such that the inequality
%
%
\begin{equation}
\label{eq:gap} E_{\nu}\bigl[(\mathcal{L}f)^2\bigr] \ge
\lambda E_{\nu}\bigl[f(-\mathcal{L})f\bigr]
\end{equation}
holds for all $f \in L^2(\nu)$. Then, by Schwarz's inequality, we have
\begin{eqnarray*}
\lambda&=& \inf\biggl\{ \frac{ E_{\nu}[(\mathcal{L}f)^2] }{E_{\nu
}[f(-\mathcal{L})f]} \Big| E_{\nu}[f] =0, f \in
L^2(\nu) \biggr\}
\\
&\ge&\inf\biggl\{ \sqrt{ \frac{ E_{\nu}[(\mathcal{L}f)^2] }{
E_{\nu
}[f^2]} } \Big| E_{\nu}[f] =0, f \in
L^2(\nu) \biggr\}.
\end{eqnarray*}
Now, we have
\[
E_{\nu_{\E,N}}\bigl[\bigl(\mathcal{L}^{*,m}_{\mathrm{LR}}f
\bigr)^2\bigr]=\frac{1}{N^2}\sum_{b,b'}E_{\nu_{\E,N}}[h_b
h_{b'} D_bf D_{b'}f],
\]
where the sum runs over all $\bigl({N\atop2}\bigr)$ unordered pairs
$b$ and
$b'$, and $h_b(x)=(x_i+x_j)^m$ if $b=\{i,j\}$. We write $b \sim b'$
when two unordered pairs have at least one common vertex (including the
case $b=b'$). Otherwise, we write $b \nsim b'$. We observe that if $b
\nsim b'$, then $E_b$ and $E_{b'}$ commute. Moreover, $h_b$ and
$h_{b'}$ are both measurable with respect to $\F_{b}$ and $\F_{b'}$
where $\F_b=\F_{i,j}$ for $b=\{i,j\}$. Therefore, using $D_b^2=-D_b$
and self-adjointness of $D_b$ and $D_{b'}$, for $ b \nsim b'$
\begin{eqnarray*}
E_{\nu_{\E,N}}[h_b h_{b'} D_bf
D_{b'}f] & =& - E_{\nu_{\E,N}}\bigl[h_b
h_{b'} (D_{b'}D_bf) (D_{b'}f)\bigr]
\\
& =&E_{\nu_{\E,N}}\bigl[h_b h_{b'}
(D_{b'}D_bf) ^2\bigr] \ge0.
\end{eqnarray*}
Therefore, it follows that
\[
E_{\nu_{\E,N}}\bigl[\bigl(\mathcal{L}^{*,m}_{\mathrm{LR}}f
\bigr)^2\bigr] \ge\frac{1}{N^2}\sum_{b,b': b \sim b'}E_{\nu_{\E,N}}[h_b
h_{b'} D_bf D_{b'}f].
\]
Now, we denote unordered triples $\{i,j,k\}$ of distinct vertices by
$T$ (triangles). We say that $b \in T$ if $b=\{i,j\}$ and $i,j \in T$.
Clearly, if $b \sim b'$ and $b \neq b'$ there is only one triangle $T$
such that $b,b' \in T$. We may therefore write
\begin{eqnarray*}
&&\sum_{b,b': b \sim b'} E_{\nu_{\E,N}}[h_b
h_{b'} D_bf D_{b'}f]
\\
&&\qquad= \sum_{b,b': b \sim b', b \neq b'}E_{\nu_{\E,N}}[h_b
h_{b'} D_bf D_{b'}f] +\sum
_{b}E_{\nu_{\E,N}}\bigl[h_b ^2
(D_bf)^2\bigr]
\\
&&\qquad= \sum_{T} \sum
_{b,b' \in T} E_{\nu_{\E,N}}[h_b h_{b'}
D_bf D_{b'}f] -(N-3)\sum_{b}E_{\nu_{\E,N}}
\bigl[h_b ^2 (D_bf)^2\bigr]
\end{eqnarray*}
since for every $b$ there are exactly $N-2$ triangles $T$ such that $b
\in T$.

Let us now apply inequality (\ref{eq:gap}) for $\mathcal
{L}_{\mathrm{LR}}^{m,*}$ to a fixed triangle $T$. Let $\F_T$ denote
the $\sigma
$-algebra generated by $\{x_l, l \notin T\}$. Then
\begin{eqnarray*}
&&\frac{1}{3}\sum_{b,b' \in T} E_{\nu_{\E,N}}[h_b
h_{b'} D_bf D_{b'}f | \F_T]
\\
&&\qquad\ge
\lambda^{*,m}_{\mathrm{LR}}\biggl(\frac{\sum_{i\in
T}x_i}{3},3\biggr) \sum
_{b}E_{\nu_{\E,N}}\bigl[h_b
(D_bf)^2 | \F_T\bigr]
\\
&&\qquad = \tilde{\kappa}_m\biggl(\sum_{i\in T}x_i
\biggr)^m \sum_{b}E_{\nu_{\E
,N}}
\bigl[h_b (D_bf)^2 | \F_T\bigr]
\\
&&\qquad= \tilde{\kappa}_m\sum_{b}E_{\nu_{\E
,N}}
\biggl[ \biggl(\sum_{i\in T}x_i
\biggr)^m h_b (D_bf)^2 \Big|
\F_T\biggr]
\\
&&\qquad \ge \tilde{\kappa}_m\sum_{b}E_{\nu_{\E,N}}
\bigl[h_b ^2 (D_bf)^2 |
\F_T\bigr],
\end{eqnarray*}
where we use $(\sum_{i\in T}x_i)^m $ is measurable with respect to $\F
_T$ and $ (\sum_{i\in T}x_i)^m \ge h_b$ for any $b \in T$. Taking $\nu
_{\E,N}$-expectation to remove the conditioning on $\F_T$, we obtain that
\[
E_{\nu_{\E,N}}\bigl[\bigl(\mathcal{L}^{*,m}_{\mathrm{LR}}f
\bigr)^2\bigr] \ge\frac{1}{N^2} \bigl( (3\tilde{
\kappa}_m -1) (N-2)+1 \bigr)\sum_{b}E_{\nu_{\E,N}}
\bigl[h_b ^2 (D_bf)^2\bigr].
\]
On the other hand, since
\[
E_{\nu_{\E,N}}\bigl[f\bigl(-\mathcal{L}^{2m,*}_{\mathrm{LR}}\bigr)f
\bigr]=\frac{1}{N}\sum_{b} E_{\nu_{\E,N}}
\bigl[h_b^2 (D_bf)^2\bigr],
\]
we have
\[
\frac{1}{N}\sum_{b}
E_{\nu_{\E,N}}\bigl[h_b^2 (D_bf)^2
\bigr] \ge\lambda^{*,2m}_{\mathrm{LR}}(\E,N) E_{\nu_{\E,N}}
\bigl[f^2\bigr].
\]
Therefore, combining the above inequalities, we complete the proof.
\end{pf*}

\section{Examples}
\label{sec:example}

In this section, we present two interesting classes of stochastic
energy exchange models for which we can apply Theorem~\ref{thm:main}.


\subsection{The rarely interacting billiard lattice}

As mentioned in the \hyperref[intro]{Intro}-\break\hyperref[intro]{duction}, the main motivation of the article
\cite{GKS} was to study the models studied in \cite{GG08,GG09}.
Gaspard and Gilbert argued that in the limit of rare
collisions, the dynamics of a billiard lattice becomes a Markov jump
process. The limiting process is actually in the class considered in
this paper. As shown in \cite{GKS}, the process studied in \cite
{GG09} has the generator of the mechanical form with
\begin{eqnarray*}
\Lambda_s(s)&=&s^{1/2}, \qquad\Lambda_r(
\beta)=\frac{\sqrt{2\pi}}{6} \frac{{1}/{2}+\b\vee(1-\b)}{\sqrt
{\b\vee(1-\b)}},
\\
P(\beta, d\a)&=&\frac{3}{2}\frac{1 \wedge\sqrt{{(\a\wedge (1-\a
))}/{(\b\wedge(1-\b))}}}{{1}/{2}+\b\vee(1-\b)}\,d\a.
\end{eqnarray*}
The symbol $\vee$ denotes the maximum and $\wedge$ denotes the minimum.
This process is reversible with respect to the product
Gamma-distribution with $\ga=\frac{3}{2}$. Moreover, it is shown in
\cite{GKS} that this measure is also reversible for the process given
by the generator corresponding to any other function $\La_s$ (while
keeping $\La_r$ and $P$ unchanged). Therefore, we consider the
generator given by $\La_s(s)=s^m$ for $m \ge0$, and denote the
spectral gap on $\mathcal{S}_{\E,N}$ of the process by $\lambda
_{GG3}^m(\E,N)$ where $3$ represents the dimension of the original
mechanical model.

Here, we also consider the process obtained from the two-dimensional
billiard lattice studied in \cite{GG08}.
Changing equations (3) and (5) in \cite{GG08} to our notation yields that
\begin{eqnarray*}
\Lambda_s(s)&=&s^{1/2}, \qquad\Lambda_r(
\beta)=\sqrt{\frac{8 (\b\vee
(1-\b))}{\pi^3}} \bigl( 2 E\bigl(\b^*\bigr) - \bigl(1- \b^*
\bigr)
K \bigl(\b^*\bigr) \bigr),
\\
P(\beta, d\a)&=&\frac{\tilde{P}(\beta, \a)}{\Lambda_r(\beta
)}d\a,
\end{eqnarray*}
where $\beta^*=\frac{\b}{1-\b} \wedge\frac{1-\b}{\b}$,
\[
\tilde{P}(\beta, \a)= \sqrt{\frac{2}{\pi^3}} \times%
\cases{
\sqrt{\displaystyle\frac{1}{1-\b}}K\biggl(\sqrt{\displaystyle
\frac{\a}{1-\b}}\biggr) & \quad$\mbox{if
} 0 \le\a\le\bigl(\b\wedge(1-\b)\bigr)$,\vspace*{2pt}
\cr
\sqrt{\displaystyle\frac{1}{1-\a}}K\biggl(
\sqrt{\displaystyle\frac{\b}{1-\a}}\biggr) & \quad$\mbox{if }
\b\le\a\le(1-\b)$,\vspace*{2pt}
\cr
\sqrt{
\displaystyle\frac{1}{\a}}K\biggl(\sqrt{\displaystyle\frac{1-\b
}{\a}}\biggr) & \quad$\mbox{if } (1-
\b) \le\a\le\b$,\vspace*{2pt}
\cr
\sqrt{\displaystyle\frac{1}{\b}}K\biggl(\sqrt{\displaystyle\frac
{1-\a}{\b}}
\biggr) & \quad$\mbox{if } \bigl(\b\vee(1-\b) \bigr) \le\a\le
1$, } %
\]
and
\[
K(t)=\int_0^{{\pi}/{2}} \frac{1}{\sqrt{1-t^2 \sin^2\theta}} \,
d\theta,
\qquad E(t)=\int_0^{{\pi}/{2}} \sqrt{1-t^2
\sin^2\theta} \,d\theta.
\]
Since the underlying mechanical model has a two-dimensional
configuration space for each of the constituent particles, this process
is reversible with respect to the product Gamma-distribution with $\ga
=1$. In the same manner as before, this measure is also reversible for
the process given by the generator corresponding to any other function
$\La_s$ (while keeping $\La_r$ and $P$ unchanged). So, we consider
the generator given by $\La_s(s)=s^m$ for $m \ge0$, and denote the
spectral gap on $\mathcal{S}_{\E,N}$ of the process by $\lambda
_{GG2}^m(\E,N)$.

Since these processes are of the mechanical form (\ref
{eq:productform}), $\lambda_{GG3}^m(\E, 2)=\break \La_s(2\E)\* \tilde
{C}_{GG3} = (2\E)^m\tilde{C}_{GG3} $ and $\lambda_{GG2}^m(\E,
2)=\La_s(2\E) \tilde{C}_{GG2} = (2\E)^m\tilde{C}_{GG2} $
where 
$\tilde{C}_{GG3}=\lambda_{GG3}^0(1,2)$ and $\tilde{C}_{GG2}=\lambda
_{GG2}^0(1,2)$.

%
\begin{lemma}
%
\[
\tilde{C}_{GG3} >0,\qquad\tilde{C}_{GG2} >0
\]
hold.
\end{lemma}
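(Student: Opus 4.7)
The plan is to establish a Doeblin-type minorization of the jump-rate kernel on a subinterval of $[0,1]$ and then deduce positivity of the spectral gap by a short variance expansion. By the discussion following Remark \ref{rem:mechanicalform}, $\tilde{C}_{GG3}$ and $\tilde{C}_{GG2}$ are precisely the spectral gaps of one-dimensional Markov chains on $[0,1]$ reversible with respect to $\mu^{3/2}$ and $\mu^{1}$ respectively, governed by the jump-rate kernel $\La_r(\b) P(\b, d\a)$.

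The key step is to exhibit, for each of the two models, a constant $c_* > 0$ such that
\begin{equation*}
\La_r(\b) P(\b, d\a) \;\ge\; c_* \, \mathbf{1}_{[1/4,\, 3/4]}(\a)\, d\a \quad \text{for every } \b \in [0,1].
\end{equation*}
For the three-dimensional model, substituting the explicit formulas gives
\begin{equation*}
\La_r(\b) P(\b, \a) \;=\; \frac{\sqrt{2\pi}}{4}\cdot \frac{1 \wedge \sqrt{(\a \wedge (1-\a))/(\b \wedge (1-\b))}}{\sqrt{\b \vee (1-\b)}},
\end{equation*}
which on $\a \in [1/4, 3/4]$ is bounded below by $\sqrt{\pi}/4$, because $\a \wedge(1-\a) \ge 1/4 \ge (1/2)(\b \wedge (1-\b))$ forces the numerator to be at least $1/\sqrt{2}$ while $\sqrt{\b \vee (1-\b)} \le 1$. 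For the two-dimensional model $\La_r(\b) P(\b, d\a) = \tilde P(\b, \a)\, d\a$, and in each of the four branches $\tilde P(\b, \a) = \sqrt{2/\pi^3}\,\sqrt{1/X}\, K(\sqrt{Y/X})$ with $X \in \{1-\b,\, 1-\a,\, \a,\, \b\}$. The partition is designed so that $\sqrt{Y/X} \in [0,1]$ on each branch; meanwhile, a case-by-case check shows $X \in [1/4, 3/4]$ in the active branch whenever $\a \in [1/4, 3/4]$, so $\sqrt{1/X} \ge 1$. Combined with the elementary bound $K(t) \ge K(0) = \pi/2$ valid for all $t \in [0,1]$, we conclude $\tilde P(\b, \a) \ge 1/\sqrt{2\pi}$ uniformly on $[0, 1] \times [1/4, 3/4]$.

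Once the minorization is in hand, the density of $\mu^\ga$ with respect to Lebesgue is uniformly bounded above on $[1/4, 3/4]$ by some $M_\ga < \infty$ (explicitly $M_1 = 1$ and $M_{3/2} = 4/\pi$), so $\mathbf{1}_{[1/4,\, 3/4]}\, d\a \ge M_\ga^{-1}\mathbf{1}_{[1/4,\, 3/4]}\, \mu^\ga(d\a)$. Then for any $f \in L^2(\mu^\ga)$ with $E_{\mu^\ga}[f] = 0$,
\begin{equation*}
\int_0^1 \mu^\ga(d\b)\La_r(\b) \int_0^1 P(\b, d\a)[f(\a) - f(\b)]^2 \;\ge\; \frac{c_*}{M_\ga}\int_0^1 \mu^\ga(d\b) \int_{1/4}^{3/4} \mu^\ga(d\a)[f(\a) - f(\b)]^2.
\end{equation*}
Expanding the square on the right and using $E_{\mu^\ga}[f] = 0$ to annihilate the cross term, the right-hand side equals $(c_*/M_\ga)\bigl(\int_{1/4}^{3/4} f^2\, d\mu^\ga + \mu^\ga([1/4, 3/4])\, E_{\mu^\ga}[f^2]\bigr)$, which is at least $(c_*/M_\ga)\, \mu^\ga([1/4, 3/4])\, E_{\mu^\ga}[f^2]$. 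Dividing by $2 E_{\mu^\ga}[f^2]$ yields a strictly positive lower bound on both $\tilde{C}_{GG3}$ and $\tilde{C}_{GG2}$.

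The most delicate step is the piecewise analysis in the two-dimensional model. However, the issue is purely bookkeeping: the argument of $K$ never exceeds $1$ by construction of the partition, so the only fact needed about the elliptic integrals is the elementary bound $K(t) \ge \pi/2$, and no sharp asymptotics near the singular point $t = 1$ are required.
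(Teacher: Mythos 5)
Your proposal is correct, and it splits into two halves of different character relative to the paper. For $\tilde{C}_{GG2}$ you do essentially what the paper does: a pointwise lower bound on $\tilde{P}(\b,\a)$ coming from $\sqrt{1/X}\ge 1$ and $K(t)\ge K(0)=\pi/2$, yielding the constant $1/\sqrt{2\pi}$; your restriction of $\a$ to $[1/4,3/4]$ and the case-by-case verification that $X\in[1/4,3/4]$ are superfluous here, since $X\le 1$ holds on all four branches and the paper indeed bounds $\tilde{P}$ below on the whole square $[0,1]^2$. For $\tilde{C}_{GG3}$, however, your route is genuinely different: the paper does not give a direct argument but invokes Lemma 5.1 of \cite{GKS} (the case $m=0$ satisfies its hypotheses), whereas you prove positivity from scratch via a Doeblin minorization $\La_r(\b)P(\b,d\a)\ge \frac{\sqrt{\pi}}{4}\mathbf{1}_{[1/4,3/4]}(\a)\,d\a$. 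The restriction to the central subinterval is exactly the right extra idea there, because the GG3 kernel degenerates as $\a\to 0,1$ (e.g.\ at $\b=1/2$ it is of order $\sqrt{\a\wedge(1-\a)}$), so the whole-square bound used for GG2 is unavailable; your computation of the cancellation of the factor $\frac12+\b\vee(1-\b)$, the bound $\a\wedge(1-\a)\ge \frac12(\b\wedge(1-\b))$, the comparison of $d\a$ with $\mu^{\ga}(d\a)$ via $M_1=1$, $M_{3/2}=4/\pi$, and the variance expansion (the cross term vanishes because $E_{\mu^\ga}[f]=0$) are all correct and give the quantitative bound $\tilde{C}\ge \frac{c_*}{2M_\ga}\mu^\ga([1/4,3/4])>0$. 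What your approach buys is a self-contained, elementary and explicit proof for both models (in particular no appeal to the convergence machinery behind \cite{GKS}); what the paper's citation buys is brevity for the three-dimensional case.
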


\begin{pf}
%
The fact $\lambda_{GG3}^0(1,2) >0$ is shown in \cite{GKS} since the
case $m=0$ satisfies the condition assumed in Lemma~5.1 of \cite{GKS}.
To show $\tilde{C}_{GG2} >0$, we write down the explicit Dirichlet
form associated to the two-dimensional model:
\[
\tilde{C}_{GG2} = \inf\biggl\{ \frac{ \int_0^1 d\b\int_0^1
\tilde
{P}(\b,\a) \,d\a[f(\a)-f(\b)]^2}{\int_0^1 d\b\int_0^1 d\a[f(\a
)-f(\b)]^2} \Big| f \in
L^2\bigl([0,1]\bigr) \biggr\}.
\]
Then, since $\tilde{P}(\b,\a) \ge\sqrt{\frac{2}{\pi^3}} K(0) =
\sqrt{\frac{1}{2\pi}}$ for all $0 \le\a, \b\le1$, we have
$\tilde{C}_{GG2} \ge\sqrt{\frac{1}{2\pi}}$.
\end{pf}

With this result, we can apply Corollary~\ref{cor:mechanicalcase} to
these models directly and obtain the following corollary.

%
\begin{corollary}
For any $m \ge0$, there exists positive constants $C$ and $C'$
independent of $\E$ and $N$ such that
\[
\lambda_{GG3}^m(\E,N) \ge C\E^m
\frac{1}{N^2},\qquad\lambda_{GG2}^m(\E,N) \ge
C' \E^m\frac{1}{N^2}.
\]
\end{corollary}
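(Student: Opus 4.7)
The corollary is essentially an immediate application of Proposition \ref{prop:mechanicalcase} to the two billiard-lattice processes, so my proof plan consists of verifying the three hypotheses of that proposition and then invoking it for each of the two models.

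First, I would observe that both processes are explicitly of mechanical form (\ref{eq:productform}) as displayed in the formulas for $\Lambda_s, \Lambda_r, P$ given just above. Since the paper replaces the original $\La_s(s) = s^{1/2}$ coming from the mechanical derivation with $\Lambda_s(s) = s^m$ for arbitrary $m \ge 0$ while keeping $\Lambda_r$ and $P$ fixed, and since the text recalls that the product Gamma-distribution is reversible for every choice of $\Lambda_s$ (with $\ga = 3/2$ in three dimensions and $\ga = 1$ in two dimensions), the two structural hypotheses of Proposition \ref{prop:mechanicalcase} are satisfied with $\ga = 3/2$ for $\lambda_{GG3}^m$ and $\ga = 1$ for $\lambda_{GG2}^m$.

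Next, I need the key nondegeneracy hypothesis of Proposition \ref{prop:mechanicalcase}, namely that the Rayleigh quotient
\[
\inf \Big\{ \frac{ \int_0^1 \mu^{\ga}(d\b) \Lambda_r (\b) \int_0^1 P(\b,d\a) [f(\a)-f(\b)]^2}{ E_{\mu^{\ga} } [f^2]}  \,\Big|\, E_{\mu^{\ga}}[f]=0,\ f \in L^2(\mu^{\ga}) \Big\}
\]
is strictly positive for each model. This is exactly the content of the preceding lemma: indeed, from the mechanical-form identity $\lambda(\E,2) = \Lambda_s(2\E)\tilde{C}$ stated in the discussion of (\ref{eq:productform}), taking $m=0$ and $\E=1$ shows that the infimum above coincides with $\tilde{C}_{GG3}$ (respectively $\tilde{C}_{GG2}$). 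The lemma asserts $\tilde{C}_{GG3}>0$ and $\tilde{C}_{GG2}>0$, so the hypothesis holds in both cases.

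Having verified all three hypotheses, Proposition \ref{prop:mechanicalcase} applied with $\ga = 3/2$ yields the bound $\lambda_{GG3}^m(\E,N) \ge C \E^m/N^2$, and applied with $\ga = 1$ yields $\lambda_{GG2}^m(\E,N) \ge C' \E^m/N^2$. Since there is no genuine obstacle here beyond bookkeeping — the analytic work has been packaged into Proposition \ref{prop:mechanicalcase} and the positivity lemma — the proof is simply a one-line invocation for each model. The only thing to be pedantic about is making clear that the constants $C, C'$ may depend on $m$ and on the fixed Gamma parameters $3/2$ and $1$, but are independent of $\E$ and $N$, which is exactly the conclusion supplied by Proposition \ref{prop:mechanicalcase}.
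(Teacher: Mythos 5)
Your proposal is correct and follows exactly the paper's route: the paper likewise verifies $\tilde{C}_{GG3}>0$ and $\tilde{C}_{GG2}>0$ in the preceding lemma and then applies Proposition \ref{prop:mechanicalcase} with $\ga=\tfrac{3}{2}$ and $\ga=1$ respectively. The only cosmetic point is that the infimum in Proposition \ref{prop:mechanicalcase} equals $2\tilde{C}_{GG3}$ (resp. $2\tilde{C}_{GG2}$) rather than $\tilde{C}_{GG3}$ itself, because of the factor $2$ in the earlier definition of $\tilde{C}$, but this is immaterial for strict positivity.
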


\subsection{Stick processes}

The class of stick processes studied in \cite{FIiiiS} is another
interesting example in the class we considered. The model was first
introduced as the microscopic model which scales to the porous medium
equations. The generator of the model is described by the rate function
and the probability kernel of the mechanical form as
\[
\Lambda_s(s)=s^m,\qquad\Lambda_r(\beta)=
\beta^m+(1-\beta)^m,\qquad P(\beta, d\a)=
\frac{m|\b-\a|^{m-1}}{\Lambda_r(\beta)}\,d\a,
\]
where $m$ is a positive parameter. $\a-1$ in \cite{FIiiiS} is associated
to $m$ here. The process is reversible with respect to a product
Gamma-distribution with $\ga=1$.

Denote the spectral gap for the stick process with parameter $m$ on
$\mathcal{S}_{\E,N}$ by $\lambda_{st}^m (\E,N)$. By the definition,
\[
\lambda^m_{st}(1,2)=\inf_{f} \biggl
\{ \frac{ m \int^1_0 \int^1_0 \{
f(t)-f(s)\}^2 |t-s|^{m-1} \,ds\,dt}{\int^1_0 \int^1_0 \{f(t)-f(s)\}
^2\,ds\,dt} \Big| f \in L^2(\nu_{1,2}) \biggr\}.
\]
Therefore, it is obvious that $\lambda^1_{st}(1,2)=1$ and $\lambda
^m_{st}(1,2)>0$ for $0 < m \le1$ as $|t-s|^m \ge|t-s|$ for any $0 \le
t,s \le1$ and $0 < m \le1$.

On the other hand, for $m >1$, we need to show $\lambda^m_{st}(1,2)
>0$ more carefully. Let $k=m-1 > 0$. For $f$ satisfying $E_{\nu
_{1,2}}[f]=\int_0^1 f(t)\,dt=0$, we have
\begin{eqnarray*}
&&\int^1_0 \int^1_0
\bigl\{f(t)-f(s)\bigr\}^2 |t-s|^k \,ds\,dt
\\
&&\qquad= \frac{2}{k+1} \int^1_0
f(t)^2 \bigl(t^{k+1}+(1-t)^{k+1}\bigr) \,dt - 2
\int^1_0 \int^1_0
f(t)f(s) |t-s|^k \,ds\,dt.
\end{eqnarray*}
Then, for any $ a >0$,
\begin{eqnarray*}
&&\biggl| \int^1_0 \int^1_0
f(t)f(s) |t-s|^k \,ds\,dt \biggr|
\\
&&\qquad= \biggl| \int^1_0
\int^1_0 f(t)f(s) \bigl( |t-s|^k -
a^k \bigr) \,ds\,dt \biggr|
\\
&&\qquad \le \int^1_0 \int^1_0
\bigl| f(t) \bigr| \bigl|f(s) \bigr| \bigl| |t-s|^k - a^k \bigr|
\,ds\,dt
\\
&&\qquad \le \int^1_0 \int^1_0
\frac{1}{2} \bigl(\bigl| f(t) \bigr|^2+ \bigl|f(s) \bigr|^2 \bigr
) \bigl|
|t-s|^k - a^k\bigr | \,ds\,dt
\\
&&\qquad = \int^1_0 f(t)^2 \int
^1_0 \bigl| |t-s|^k - a^k \bigr|
\,ds\,dt.
\end{eqnarray*}

By simple calculations,
\begin{eqnarray*}
&&\int^1_0 \bigl| |t-s|^k -
a^k \bigr| \,ds 
\\
&&\qquad= \int^{1-t}_0
\bigl| q^k -a^k \bigr| \,dq + \int^t_0
\bigl| q^k -a^k \bigr| \,dq
\\
&&\qquad = \int^{1-t}_{(1-t) \wedge a } \bigl(q^k
-a^k\bigr) \,dq - \int^{(1-t) \wedge a
}_0
\bigl(q^k -a^k\bigr) \,dq
\\
&&\qquad\quad{} + \int^{t}_{t \wedge a } \bigl(q^k
-a^k\bigr) \,dq - \int^{t \wedge a }_0
\bigl(q^k -a^k\bigr) \,dq
\\
&&\qquad = \frac{1}{k+1} \bigl( (1-t)^{k+1} +t^{k+1} - 2 \bigl(
(1-t) \wedge a \bigr)^{k+1} - 2 ( t \wedge a )^{k+1} \bigr)
\\
&&\qquad\quad {} - a^k \bigl( 1 - 2 \bigl( (1-t) \wedge a \bigr) -
2 ( t \wedge
a ) \bigr)
\\
&&\qquad \le \frac{1}{k+1} \bigl( (1-t)^{k+1} +t^{k+1} \bigr) -
a^k (1 -4a). 
\end{eqnarray*}
Therefore,
\begin{eqnarray*}
&& \int^1_0 \int^1_0
\bigl\{f(t)-f(s)\bigr\}^2 |t-s|^k \,ds\,dt
\\
&&\qquad\ge\frac{2}{k+1} \int^1_0
f(t)^2 \bigl(t^{k+1}+(1-t)^{k+1}\bigr) \,dt
\\
&&\qquad\quad{} - 2 \int^1_0
f(t)^2 \biggl( \frac{(1-t)^{k+1} +t^{k+1} }{k+1} - a^k (1 -4a) \biggr
)\,
dt
\\
&&\qquad= 2 \int^1_0 f(t)^2
a^k (1 -4a)\, dt.
\end{eqnarray*}
Namely, for any $0<a <\frac{1}{4}$, we have $\lambda_{st}^m(1,2) \ge
a^{m-1}(1-4a) >0$.

With this result, we can apply Corollary~\ref{cor:mechanicalcase} to
the stick process directly and obtain the following corollary.

%
\begin{corollary}
For any $m > 0$, there exists a positive constant $C$ independent of
$\E$ and $N$ such that
\[
\lambda_{st}^m(\E,N) \ge C\E^m
\frac{1}{N^2}.
\]
\end{corollary}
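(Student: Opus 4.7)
The plan is to directly invoke Proposition \ref{prop:mechanicalcase}, since the stick process fits the mechanical-form hypothesis with $\Lambda_s(s) = s^m$, and the computation immediately preceding the corollary provides exactly the positivity of the two-particle spectral gap that the proposition requires.

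First I would observe that the stick process is of mechanical form (\ref{eq:productform}) with $\Lambda_s(s)=s^m$, $\Lambda_r(\beta)=\beta^m+(1-\beta)^m$, and $P(\beta,d\alpha)=\frac{m|\beta-\alpha|^{m-1}}{\Lambda_r(\beta)}d\alpha$, and that it is reversible with respect to the product Gamma-distribution with $\gamma=1$, so that $\mu^\gamma$ is Lebesgue measure on $[0,1]$. In particular, the Dirichlet form appearing in the hypothesis of Proposition \ref{prop:mechanicalcase} simplifies to
\[
\int_0^1 d\beta\,\Lambda_r(\beta)\int_0^1 P(\beta,d\alpha)[f(\alpha)-f(\beta)]^2 = m\int_0^1\!\!\int_0^1 [f(\alpha)-f(\beta)]^2 |\beta-\alpha|^{m-1}\,d\alpha\,d\beta,
\]
which up to the factor $2$ in the denominator is precisely the numerator defining $\lambda_{st}^m(1,2)$.

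Next I would use the bound just established in the excerpt, namely $\lambda_{st}^m(1,2)\ge a^{m-1}(1-4a)>0$ for any $0<a<\tfrac14$ when $m>1$, together with the elementary bound $\lambda_{st}^m(1,2)>0$ for $0<m\le 1$ derived from $|t-s|^{m-1}\ge 1$ on $[0,1]$ (with the trivial case $m=1$ giving $\lambda_{st}^1(1,2)=1$). This shows that the infimum
\[
\inf\Big\{ \frac{\int_0^1 \mu^{1}(d\beta)\,\Lambda_r(\beta)\int_0^1 P(\beta,d\alpha)[f(\alpha)-f(\beta)]^2}{E_{\mu^1}[f^2]}\,\Big|\,E_{\mu^1}[f]=0,\ f\in L^2(\mu^1) \Big\}
\]
is strictly positive for every $m>0$, which is exactly the hypothesis of Proposition \ref{prop:mechanicalcase}.

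Finally, applying Proposition \ref{prop:mechanicalcase} with $\gamma=1$ yields a positive constant $C$, independent of $\E$ and $N$, such that $\lambda_{st}^m(\E,N)\ge C\E^m N^{-2}$ for all $\E>0$ and $N\ge 2$, completing the proof. The only step that is not entirely mechanical is the verification that the computation in the excerpt really supplies the positivity of the two-particle gap in the precise form required by the proposition, but this is essentially a bookkeeping check: the factor of $2$ between $\tilde C$ (as defined after Remark \ref{rem:mechanicalform}) and the infimum in Proposition \ref{prop:mechanicalcase} is harmless, so no additional estimate is needed.
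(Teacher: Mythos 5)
Your proposal is correct and follows essentially the same route as the paper: the positivity of the two-particle gap $\lambda^m_{st}(1,2)$ established just before the corollary (trivially for $0<m\le 1$, via the bound $a^{m-1}(1-4a)$ for $m>1$) is exactly the hypothesis of Proposition \ref{prop:mechanicalcase} with $\gamma=1$, up to the harmless factor of $2$, and the corollary follows by applying that proposition. No gap to report.
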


\begin{appendix}\label{app}
\section*{Appendix: Spectral gap for $3$-site system}

In this Appendix, we give a proof of Lemma~\ref
{lem:kappa1}. From now
on, we fix $\nu=\nu_{1/3,3}^{\ga}$ and denote by $E$ the integration
with respect to $\nu$. For each $n \in\N$, let $P_n$ be the set of
polynomials of three variables of degree less than or equal to $n$ and
$\tilde{P}_n$ be the set of polynomials of one variable of degree less
than or equal to $n$.

Since $P_n$ is dense in $L^2(\nu)$, we have
\begin{eqnarray*}
\tilde{\kappa}_1 &=& \inf\biggl\{\frac{D^{*,1}_{\mathrm
{LR}}(f)}{E[f^2]}; E[f]=0, f \in
L^2(\nu) \biggr\}
\\
&=& \inf_{n \in\N} \inf\biggl\{
\frac
{D^{*,1}_{\mathrm{LR}}(f)}{E[f^2]}; E[f]=0, f \in P_n \biggr\}.
\end{eqnarray*}
Then, since $D^{*,1}_{\mathrm{LR}}(f)=\frac{1}{3}\sum_{i=1}^3
E[(1-x_i)(f-E[f|x_i])^2]$ where $E[f|x_i]=\break E[f|\G_i]$ and $\G_i$ is
the $\sigma$-algebra generated by $x_i$,
\[
\tilde{\kappa}_1 = \frac{1}{3} \inf_{n \in\N}
\inf\biggl\{ \frac{2
E[f^2] - \sum_{i=1}^3 E[(1-x_i)E[f|x_i]^2]}{E[f^2]}; E[f]=0, f \in P_n
\biggr\}.
\]

Therefore, to show $\tilde{\kappa}_1 > \frac{1}{3}$, we only need to
show that
%
%
\begin{equation}
\label{eq:statement1} \sup_{n \in\N} \sup\biggl\{ \frac{\sum_{i=1}^3
E[(1-x_i)E[f|x_i]^2]}{E[f^2]};
E[f]=0, f \in P_n \biggr\} < 1.
\end{equation}

Now, we construct a set of special functions which generates $P_n$.

First, for each $n \in\N$, let $J_n \in\tilde{P}_n$ be
%
%
\begin{equation}
\quad J_n(u)=\frac{\Gamma(n+\gamma)}{n!\Gamma(n+3\ga-1)}\sum_{m=0}^n
(-1)^m \binom{n} {m} \frac{\Gamma(n+m+3\gamma-1)}{\Gamma(m+\ga)}u^m.
\end{equation}
$\{J_n \}_{n \in\N} $ are orthogonal polynomials called the Jacobi
polynomials with parameters $(\ga-1,2\ga-1)$ on the interval $[0,1]$.
We choose the parameter since
$\{J_n \}_{n \in\N} $ are orthogonal with respect to the marginal of
$x_1$ under $\nu$, or precisely the beta distribution of parameters
$(\ga, 2\ga)$. By the construction, for $1 \le i \le3$,
%
%
\begin{equation}
\quad E\bigl[J_n(x_i)\bigr]=0 \qquad(n \in\N),\qquad E
\bigl[J_n(x_i)J_m(x_i)\bigr] = 0
\qquad(n \neq m).
\end{equation}

\setcounter{lemma}{0}
\begin{lemma}
For any $n \in\N$,
\[
E\bigl[J_n(x_i)|x_j\bigr]=
\nu_n J_n (x_j) \qquad\mbox{for } i \neq j,
\]
where $\nu_n =(-1)^n \frac{\Gamma(2\ga)\Gamma(n+\ga)}{\Gamma(\ga
)\Gamma(n+2\ga)}$.
\end{lemma}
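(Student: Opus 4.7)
The plan is a direct computation, leveraging the Dirichlet structure of $\nu$ together with the hypergeometric representation of $J_n$. First I would identify the conditional law: since $\nu=\nu^{\ga}_{1/3,3}$ is the Dirichlet$(\ga,\ga,\ga)$ distribution (the joint law of three i.i.d.\ Gamma$(\ga,1)$ variables conditioned on unit sum), a standard property of the Dirichlet family gives that, conditionally on $x_j=u$, the variable $x_i$ (for $i\neq j$) has the same law as $(1-u)Y$ with $Y\sim\mu^{\ga}$ independent of $x_j$. Therefore
\[
E[J_n(x_i)\mid x_j=u]\;=\;\int_0^1 J_n\bigl((1-u)y\bigr)\,\mu^{\ga}(dy).
\]

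Next I would rewrite $J_n$ as a Gauss hypergeometric function. Using the Pochhammer symbol $(a)_k:=\Gamma(a+k)/\Gamma(a)$ and the identity $(-1)^m\binom{n}{m}=(-n)_m/m!$, the given explicit series collapses to
\[
J_n(u)\;=\;\frac{(\ga)_n}{n!}\;{}_2F_1\bigl(-n,\,n+3\ga-1;\,\ga;\,u\bigr).
\]
Inserting this into the integral above and integrating term-by-term against the Beta moments $\int_0^1 y^m\,\mu^{\ga}(dy)=(\ga)_m/(2\ga)_m$ yields
\[
E[J_n(x_i)\mid x_j=u]\;=\;\frac{(\ga)_n}{n!}\;{}_2F_1\bigl(-n,\,n+3\ga-1;\,2\ga;\,1-u\bigr).
\]

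To match this with $\nu_n J_n(u)$ I would invoke the classical terminating-series transformation
\[
{}_2F_1(-n,b;c;z)\;=\;\frac{(c-b)_n}{(c)_n}\,{}_2F_1\bigl(-n,b;\,b-c-n+1;\,1-z\bigr),
\]
which for nonnegative integer $n$ follows from the Pfaff transformation (and can also be checked directly by induction on $n$). Applied with $b=n+3\ga-1$, $c=2\ga$ and $z=1-u$, together with the elementary Pochhammer reflection $(1-n-\ga)_n=(-1)^n(\ga)_n$, it rewrites the right-hand side above as $\frac{(-1)^n(\ga)_n}{(2\ga)_n}\,J_n(u)$; the prefactor is exactly $\nu_n$ after expanding $(\ga)_n/(2\ga)_n=\Gamma(n+\ga)\Gamma(2\ga)/[\Gamma(\ga)\Gamma(n+2\ga)]$.

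The only mildly delicate point is the algebraic bookkeeping around the two Pochhammer reflections and selecting the correct direction of the hypergeometric transformation; no conceptual obstacle arises. Conceptually, the content of the lemma is simply that the Dirichlet conditional-expectation operator is diagonalized in the orthogonal basis of Jacobi polynomials, with eigenvalues $\nu_n$.
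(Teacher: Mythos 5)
Your proof is correct, but it takes a genuinely different route from the paper. The paper argues indirectly: it quotes the proof of Theorem 3.2 of \cite{S} for the existence of polynomials $\psi_n\in\tilde{P}_n$ with $E[\psi_n(x_i)|x_j]=\nu_n\psi_n(x_j)$, then uses the symmetry $\nu_nE[\psi_n(x_1)\psi_m(x_1)]=E[\psi_n(x_2)\psi_m(x_1)]=\nu_mE[\psi_n(x_2)\psi_m(x_2)]$ together with $\nu_n\neq\nu_m$ to conclude that the $\psi_n$ are orthogonal for the marginal law, hence proportional to the $J_n$, which transfers the eigenvalue relation to $J_n$. You instead compute the conditional expectation explicitly: the Dirichlet aggregation property (the paper's ``non-interference'') gives the conditional law of $x_i$ given $x_j=u$ as $(1-u)Y$ with $Y\sim\mu^{\ga}$, the series for $J_n$ collapses to $\frac{(\ga)_n}{n!}\,{}_2F_1(-n,n+3\ga-1;\ga;u)$, term-by-term integration against the Beta moments $(\ga)_m/(2\ga)_m$ produces $\frac{(\ga)_n}{n!}\,{}_2F_1(-n,n+3\ga-1;2\ga;1-u)$, and the terminating transformation ${}_2F_1(-n,b;c;z)=\frac{(c-b)_n}{(c)_n}{}_2F_1(-n,b;b-c-n+1;1-z)$ with $b=n+3\ga-1$, $c=2\ga$ (note the target parameter $b-c-n+1=\ga>0$, so no degeneracy) yields exactly the factor $(-1)^n(\ga)_n/(2\ga)_n=\nu_n$; I checked the normalizations and they match (e.g.\ $n=1$ gives $-1/2$ for every $\ga$... no, $\nu_1=-\frac{\ga+0}{2\ga+0}\cdot$ wait, $\nu_1=-(\ga)/(2\ga)=-\tfrac12$, consistent with the direct computation). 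Your approach buys a self-contained verification of the explicit eigenvalue, independent of the external reference, and it makes the diagonalization completely constructive; the paper's argument avoids all hypergeometric manipulation and makes clear that self-adjointness forces the eigenfunctions to be the orthogonal polynomials, at the price of importing the eigenvalue computation from \cite{S}. One small attribution slip: the $z\mapsto 1-z$ transformation you use does not come from Pfaff alone (Pfaff relates $z$ to $z/(z-1)$); it is the Gauss connection formula between the solutions at $0$ and $1$, where the second term vanishes because $1/\Gamma(-n)=0$, or equivalently it follows by expanding $z^m=(1-(1-z))^m$ and applying Chu--Vandermonde. Since the identity is classical and you also note it can be verified by induction, this is cosmetic, not a gap.
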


\begin{pf}
We first remark that for any $f \in\tilde{P}_n$, $E[f(x_i)|x_j] \in
\tilde{P}_n$ as a function of $x_j$. Moreover, as shown in the proof
of Theorem~3.2 in \cite{S}, there exists a set of polynomials $\psi
_n$ which satisfies $E[\psi_n (x_i)|x_j]=\nu_n \psi_n (x_j)$ for $i
\neq j$ and $\psi_n \in\tilde{P}_n$ where $\nu_n =(-1)^n \frac
{\Gamma(2\ga)\Gamma(n+\ga)}{\Gamma(\ga)\Gamma(n+2\ga)}$. Then, since
%
%
\begin{equation}
\nu_n E\bigl[\psi_n (x_1)
\psi_m(x_1)\bigr] = E\bigl[ \psi_n
(x_2) \psi_m(x_1) \bigr]= \nu_m E
\bigl[\psi_n (x_2) \psi_m(x_2)
\bigr]
\end{equation}
and $\nu_n \neq\nu_m$ for $n \neq m$, $\{\psi_n\}_{n \in\N}$ are
orthogonal polynomials with respect to the marginal of $x_1$ under $\nu
$, which implies $J_n=c_n \psi_n$ for some $c_n \neq0$ and $
E[J_n(x_i)|x_j]=\nu_n J_n (x_j)$ for $i \neq j$.
\end{pf}

Next, we consider following polynomials $F_n, G_n, H_n \in P_n$:
%
%
\begin{eqnarray}
F_n(x_1,x_2,x_3) &
=&J_n(x_1)+J_n(x_2)+J_n(x_3),
\\
G_n(x_1,x_2,x_3) &
=&J_n(x_1)-J_n(x_3),
\\
H_n(x_1,x_2,x_3) &
=&J_n(x_1)-2J_n(x_2)+J_n(x_3).
\end{eqnarray}

For any $n \in\N$, let $Q_n$ denote a subspace of $P_n$ generated by
$F_0 := 1$ and $\{F_k,G_k, H_k \}_{1 \le k \le n}$ and $Q_n^{\perp}$
be the orthogonal complement of $Q_n$ of $P_n$ equipped with the inner
product induced from $L^2(\nu)$.

%
\begin{proposition}
For any $n \in\N$ and $f \in Q_n^{\perp}$,
%
%
\begin{equation}
E[f|x_i]=0,\qquad1 \le\forall i \le3.
\end{equation}
\end{proposition}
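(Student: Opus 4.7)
The plan is to exploit a simple linear algebra observation: for each $k \ge 1$, the triple $(F_k, G_k, H_k)$ is an invertible linear combination of $(J_k(x_1), J_k(x_2), J_k(x_3))$, since the coefficient matrix
$$\begin{pmatrix} 1 & 1 & 1 \\ 1 & 0 & -1 \\ 1 & -2 & 1 \end{pmatrix}$$
has determinant $-6 \neq 0$. Hence $Q_n$ coincides with the linear span of $\{1\} \cup \{J_k(x_i) : 1 \le k \le n,\ i = 1, 2, 3\}$.

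Now take $f \in Q_n^{\perp}$. For each $i \in \{1,2,3\}$ and each $0 \le k \le n$ (the case $k=0$ using $f \perp F_0 = 1$), we have $E[f \cdot J_k(x_i)] = 0$. By the tower property, $E[f \cdot J_k(x_i)] = E[E[f|x_i] \cdot J_k(x_i)]$, so $E[f|x_i]$, viewed as a function of $x_i$, is orthogonal to $J_0, J_1, \ldots, J_n$ in $L^2$ of the marginal distribution of $x_i$ under $\nu$.

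The next step is to argue that $E[f|x_i] \in \tilde{P}_n$. The measure $\nu = \nu^{\ga}_{1/3,3}$ is a Dirichlet-type measure on the simplex $\{x_1 + x_2 + x_3 = 1\}$, and the conditional distribution of the remaining coordinates given $x_i$ is supported on $\{x_j + x_k = 1 - x_i\}$ with $x_j = (1-x_i) Y$ for $Y \sim \mathrm{Beta}(\gamma,\gamma)$. Consequently, $E[x_j^a x_k^b | x_i]$ is $(1-x_i)^{a+b}$ times a constant, hence a polynomial in $x_i$ of degree $a+b$. Applying this monomial by monomial to any $f \in P_n$ shows that $E[f|x_i]$ is a polynomial in $x_i$ of degree at most $n$.

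To conclude, since $\{J_k\}_{0 \le k \le n}$ is an orthogonal basis of $\tilde{P}_n$ with respect to the marginal of $x_i$ (namely the $\mathrm{Beta}(\gamma, 2\gamma)$ distribution), and $E[f|x_i] \in \tilde{P}_n$ is orthogonal to every $J_k$ for $0 \le k \le n$, it must vanish. The only genuinely non-formal step is the verification that $E[f|x_i] \in \tilde{P}_n$; everything else is bookkeeping. I expect this to be the least routine point, but it is a short explicit calculation using the Dirichlet structure of $\nu$.
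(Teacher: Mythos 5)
Your proposal is correct and follows essentially the same route as the paper: identify $Q_n$ with the span of $\{1\}\cup\{J_k(x_i)\}$, show $E[f|x_i]\in\tilde{P}_n$, and conclude by orthogonality to the Jacobi polynomials under the $\mathrm{Beta}(\ga,2\ga)$ marginal. The only difference is that you spell out, via the Dirichlet/non-interference structure of $\nu$, the degree-preservation step $E[f|x_i]\in\tilde{P}_n$, which the paper asserts with a reference to the explicit two-variable integration and to the Carlen--Carvalho--Loss paper.
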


\begin{pf}
For any $f \in P_n$, by the explicit expression of the integration over
two variables, it is not hard to show that $E[f|x_i] \in\tilde{P}_n$.
The same property was pointed out in \cite{CCL}. Therefore,
$E[f|x_i]=\sum_{k=1}^{n} t_k J_k(x_i)+t_0$ with some constants $t_k$.
On the other hand, since $1, J_k(x_i) \in Q_n$ for $1 \le k \le n$ and
by the assumption $f \in Q_n^{\perp}$, we have $t_k=0$ for $0 \le k
\le n$.
\end{pf}

For any $f \in P_n$, we can write $f= \sum_{i=0}^n a_i F_i + \sum
_{i=1}^n b_i G_i + \sum_{i=1}^n c_i H_i + K$ with some $K \in
Q_n^{\perp}$ and constants $a_i$, $b_i$ and $c_i$. In particular, if
$E[f]=0$, then $a_0=0$. Moreover, since $F_1=0$ on $\mathcal
{S}_{{1}/{3},3}$, we take $a_1=0$. Then, for any $f \in P_n$ satisfying
$E[f]=0$,
\begin{eqnarray*}
\hspace*{-4pt}&& \sum_{i=1}^3 E\bigl[(1-x_i)E[f|x_i]^2
\bigr]
\\
\hspace*{-4pt}&&\qquad= \sum_{i=1}^3 E
\Biggl[(1-x_i) \Biggl(E\Biggl[\sum_{k=2}^n
a_k F_k + \sum_{k=1}^n
b_k G_k + \sum_{k=1}^n
c_k H_k |x_i\Biggr] \Biggr) ^2
\Biggr]
\\
\hspace*{-4pt}&&\qquad= E\Biggl[(1-x_1) \Biggl( \sum
_{k=2}^n a_k (1+2\nu_k)
J_k (x_1) + \sum_{k=1}^n
(b_k + c_k) (1-\nu_k) J_k(x_1)
\Biggr) ^2\Biggr]
\\
\hspace*{-4pt}&&\qquad\quad{} + E\Biggl[(1-x_2) \Biggl( \sum
_{k=2}^n a_k (1+2\nu_k)
J_k (x_2) - \sum_{k=1}^n
2 c_k (1-\nu_k) J_k(x_2) \Biggr)
^2\Biggr]
\\
\hspace*{-4pt}&&\qquad\quad{} + E\Biggl[(1-x_3) \Biggl( \sum
_{k=2}^n a_k (1+2\nu_k)
J_k (x_3) + \!\sum_{k=1}^n
(-b_k + c_k) (1-\nu_k) J_k(x_3)
\Biggr) ^2\Biggr]
\\
\hspace*{-4pt}&&\qquad= 3 E\Biggl[(1-x_1) \Biggl( \sum
_{k=2}^n a_k (1+2\nu_k)
J_k (x_1) \Biggr) ^2\Biggr]
\\
\hspace*{-4pt}&&\qquad\quad{}+ 2 E
\Biggl[(1-x_1) \Biggl( \sum_{k=1}^n
b_k (1-\nu_k) J_k(x_1) \Biggr)
^2\Biggr]
\\
\hspace*{-4pt}&&\qquad\quad{} + 6 E\Biggl[(1-x_1) \Biggl( \sum
_{k=1}^n c_k (1-\nu_k)
J_k(x_1) \Biggr) ^2\Biggr].
\end{eqnarray*}
On the other hand,
\begin{eqnarray*}
E\bigl[f^2\bigr] & =& E\Biggl[ \Biggl(\sum
_{k=2}^n a_k F_k + \sum
_{k=1}^n b_k G_k +
\sum_{k=1}^n c_k
H_k +K \Biggr) ^2\Biggr]
\\
& =& 3 \sum_{k=2}^n a_k^2
(1+2\nu_k) E\bigl[ J_k (x_1)^2
\bigr] + 2 \sum_{k=1}^n
b_k^2 (1- \nu_k) E\bigl[ J_k
(x_1)^2\bigr]
\\
&&{} + 6 \sum_{k=1}^n
c_k^2 (1- \nu_k) E\bigl[ J_k
(x_1)^2\bigr] + E \bigl[K^2\bigr].
\end{eqnarray*}

Therefore, to show (\ref{eq:statement1}) we only need to show that
%
%
\begin{equation}
\label{eq1} \sup_{n \in\N} \sup_{a=(a_k)} \biggl
\{ \frac{ E_{\mu}[(1-u) ( \sum_{k=2}^n a_k (1+2\nu_k) J_k (u) ) ^2]
}{\sum_{k=2}^n a_k^2
(1+2\nu_k) E_{\mu}[ J_k (u)^2]} \biggr\} < 1
\end{equation}
and
%
%
\begin{equation}
\label{eq2} \sup_{n \in\N} \sup_{b=(b_k)} \biggl
\{ \frac{ E_{\mu}[(1-u) ( \sum_{k=1}^n b_k (1-\nu_k) J_k(u) ) ^2]
}{\sum_{k=1}^n b_k^2 (1- \nu
_k) E_{\mu}[ J_k (u)^2]} \biggr\} < 1,
\end{equation}
where $\mu$ is the beta distribution with parameters $(\ga, 2\ga)$.

Since $\{ J_{n}\}$ is a series of orthogonal polynomials, we have
\begin{eqnarray*}
&& E_{\mu}\Biggl[(1-u) \Biggl( \sum_{k=2}^n
a_k (1+2\nu_k) J_k (u) \Biggr) ^2
\Biggr]
\\
&&\qquad= \sum_{k=2}^n
a_k^2 (1+2\nu_k)^2
E_{\mu}\bigl[(1-u) J_k (u) ^2\bigr]
\\
&&\qquad\quad{} -2 \sum_{k=2}^{n-1}
a_k a_{k+1} (1+2\nu_k) (1+2\nu_{k+1})
E_{\mu
}\bigl[u J_k (u) J_{k+1} (u)\bigr].
\end{eqnarray*}

Define $J_{n,m} \in\R$ for $n \in\N$ and $ 1 \le m \le n$ as
\[
J_{n}(u)=\sum_{m=0}^n
J_{n,m} u^m.
\]

Then we have
\begin{eqnarray*}
E_{\mu}\bigl[ u J_k (u) ^2\bigr] & =&
E_{\mu}\bigl[ u^{k+1} J_{k,k} J_k(u)
\bigr] + E_{\mu
}\bigl[ u^{k} J_{k,k-1}
J_k(u) \bigr]
\\
& =& \frac{J_{k,k}}{J_{k+1,k+1}} E_{\mu}\bigl[ \bigl( J_{k+1}(u) -
J_{k+1,k} u^k \bigr) J_k(u) \bigr] +
\frac{J_{k,k-1}}{J_{k,k}} E_{\mu}\bigl[ J_k(u)^2 \bigr]
\\
& =& - \frac{J_{k+1,k}}{J_{k+1,k+1}} E_{\mu}\bigl[J_k(u)
^2 \bigr] + \frac
{J_{k,k-1}}{J_{k,k}} E_{\mu}\bigl[
J_k(u)^2 \bigr]
\end{eqnarray*}
and
\[
E_{\mu}\bigl[u J_k (u) J_{k+1} (u)\bigr] =
E_{\mu}\bigl[ u^{k+1} J_{k,k} J_{k+1}(u)
\bigr] = \frac{J_{k,k}}{J_{k+1,k+1}} E_{\mu}\bigl[J_{k+1}(u)
^2 \bigr].
\]
Therefore, we have
%
%
\begin{eqnarray*}
&&\quad\quad E_{\mu}\Biggl[(1-u) \Biggl( \sum_{k=2}^n
a_k (1+2\nu_k) J_k (u) \Biggr) ^2
\Biggr]
\\
&&\quad\quad\qquad= \sum_{k=2}^n
a_k^2 (1+2\nu_k)^2 \biggl( 1 +
\frac
{J_{k+1,k}}{J_{k+1,k+1}} - \frac{J_{k,k-1}}{J_{k,k}} \biggr) E_{\mu
}
\bigl[J_k (u) ^2\bigr]
\\
&&\quad\quad\qquad\quad{} -2 \sum_{k=2}^{n-1}
a_k a_{k+1} (1+2\nu_k) (1+2\nu_{k+1})
\frac
{J_{k,k}}{J_{k+1,k+1}} E_{\mu}\bigl[J_{k+1}(u) ^2
\bigr].
\end{eqnarray*}

In the same manner, we have
\begin{eqnarray*}
&& E_{\mu}\Biggl[(1-u) \Biggl( \sum_{k=1}^n
b_k (1-\nu_k) J_k(u) \Biggr) ^2
\Biggr]
\\
&&\qquad= \sum_{k=1}^n
b_k^2 (1-\nu_k)^2 \biggl( 1 +
\frac
{J_{k+1,k}}{J_{k+1,k+1}} - \frac{J_{k,k-1}}{J_{k,k}} \biggr) E_{\mu
}
\bigl[J_k (u) ^2\bigr]
\\
&&\qquad\quad{} -2 \sum_{k=1}^{n-1}
b_k b_{k+1} (1-\nu_k) (1-\nu_{k+1})
\frac
{J_{k,k}}{J_{k+1,k+1}} E_{\mu}\bigl[J_{k+1}(u) ^2
\bigr].
\end{eqnarray*}

Now, we change variables as $\tilde{a}_k=a_k \sqrt{(1 +2\nu_k)
E_{\mu}[ J_k (u)^2]}$ and $\tilde{b}_k=\break b_k \sqrt{(1 -\nu_k)
E_{\mu
}[ J_k (u)^2]}$.
Note that $1 +2\nu_k > 0$ for $k \ge2$ and $1 -\nu_k >0$ for $k \ge
1 $.

Then conditions (\ref{eq1}) and (\ref{eq2}) can be rewritten as
%
%
\begin{eqnarray}
\label{eq:statment2}
&&\quad\sup_{n \in\N} \sup_{\tilde{a}=(\tilde{a}_k) } \Biggl
\{ \Biggl( \sum_{k=2}^n \tilde{a}_k^2 (1+2\nu_k) p_k
\nonumber
\\[-8pt]
\\[-8pt]
&&\quad\hspace*{55pt}{}-2 \sum_{k=2}^{n-1} \tilde{a}_k\tilde
{a}_{k+1} \sqrt{ (1+2\nu
_k)(1+2\nu_{k+1}) } q_k \Biggr)\bigg/{\sum_{k=2}^n \tilde{a}_k ^2
} \Biggr\} < 1\nonumber
\end{eqnarray}
and
%
%
\begin{eqnarray}
\label{eq:b}
&&\quad\sup_{n \in\N} \sup_{\tilde{b}=(\tilde{b}_k) }
\Biggl
\{ \Biggl( \sum_{k=1}^n \tilde{b}_k^2 (1-\nu_k) p_k
\nonumber
\\[-8pt]
\\[-8pt]
&&\quad\hspace*{55pt}{}-2 \sum_{k=1}^{n-1} \tilde{b}_k\tilde
{b}_{k+1} \sqrt{ (1-\nu
_k)(1-\nu_{k+1}) } q_k \Biggr)\bigg/{\sum_{k=1}^n \tilde{b}_k^2 }
\Biggr\} < 1,\nonumber
\end{eqnarray}
where $p_k= 1 + \frac{J_{k+1,k}}{J_{k+1,k+1}} - \frac
{J_{k,k-1}}{J_{k,k}} $ and $q_k=\frac{J_{k,k}}{J_{k+1,k+1}} \sqrt
{\frac{E_{\mu}[J_{k+1}(u) ^2 ] }{E_{\mu}[J_{k}(u) ^2 ]}}$. Note that
$|q_k|=-q_k$ for all $k \in\N$.


Since for any sequence of positive numbers $\{ \a_k\}_{k \ge2}$
\[
\sum_{k=2}^{n-1} \biggl(
\tilde{a}_k \sqrt{ \frac{(1+2 \nu_k)
|q_k|}{\a_k}} - \tilde{a}_{k+1}
\sqrt{ (1+2 \nu_{k+1})|q_k| \a_k }
\biggr)^2 \ge0
\]
we have
\begin{eqnarray*}
&& - 2 \sum_{k=2}^{n-1}
\tilde{a}_k\tilde{a}_{k+1} \sqrt{ (1 +2\nu_k)
(1+2\nu_{k+1}) } q_k
\\
&&\qquad= 2 \sum_{k=2}^{n-1}
\tilde{a}_k\tilde{a}_{k+1} \sqrt{ (1 +2\nu_k)
(1+2\nu_{k+1}) } |q_k|
\\
&&\qquad\le\sum_{k=2}^{n-1} \biggl(
\tilde{a}_k^2 \frac{(1+2 \nu_k)
|q_k| }{\a_k} + \tilde{a}_{k+1}^2
(1+2\nu_{k+1})|q_k| \a_k \biggr).
\end{eqnarray*}
Namely,
\begin{eqnarray*}
&& \sup_{n \ge2} \sup_{\tilde{a} } \biggl\{
\frac{ \sum_{k=2}^n \tilde{a}_k^2 (1+2\nu_k) p_k
-2 \sum_{k=2}^{n-1} \tilde{a}_k\tilde{a}_{k+1} \sqrt{ (1+2\nu
_k)(1+2\nu_{k+1}) } q_k }{\sum_{k=2}^n \tilde{a}_k^2 } \biggr\}
\\
&&\qquad\le\sup_{n \ge2} \sup_{\tilde{a} } \biggl\{
\frac{ \sum_{k=2}^n \tilde{a}_k^2 (1+2 \nu_k) ( p_k + {|q_k| }/{\a
_k} + |q_{k-1}| \a_{k-1} )
}{\sum_{k=2}^n \tilde{a}_k^2 } \biggr\},
\end{eqnarray*}
where $\a_1=0$ for convention. Therefore, we can conclude (\ref
{eq:statment2}) if we succeed to show the following proposition.

%
\begin{proposition}\label{prop:a}
There exists a sequence of positive numbers $\{ \a_n \}_{n=2}^{\infty
}$ which satisfies
\[
\sup_{n \ge2 } \biggl\{ (1+2\nu_n) \biggl(
p_n + \frac{|q_n| }{\a_n} + |q_{n-1}| \a_{n-1}
\biggr) \biggr\} < 1,
\]
where $\a_1=0$ for convention.
\end{proposition}

In the same manner, to show (\ref{eq:b}), we only need to show the
following proposition.

%
\begin{proposition}\label{prop:b}
There exists a sequence of positive numbers $\{ \b_n \}_{n=1}^{\infty
}$ which satisfies
\[
\sup_{n \ge1 } \biggl\{ (1-\nu_n) \biggl(
p_n + \frac{|q_n| }{\b_n} + |q_{n-1}| \b_{n-1}
\biggr) \biggr\} < 1,
\]
where $\b_0=0$ for convention.
\end{proposition}

\subsection{Some properties of constants}

To prove the desired propositions, we first study some properties of
constants $\nu_n$, $p_n$ and $q_n$. Hereafter, to emphasize the fact
that $\nu_n$, $p_n$ and $q_n$ depend not only on $n$ but also $\ga$,
we denote them by $\nu_n(\ga)$, $p_n(\ga)$ and $q_n(\ga)$.

%
\begin{lemma}\label{lem:propertynu}
For each fixed $\ga>0$, $|\nu_n(\ga)|$ is decreasing as a function
of $n$ for $n \ge1$. Moreover, for each fixed $n \in\N$, $|\nu
_n(\ga)|$ is decreasing as a function of $\gamma$ for $\ga>0$.
\end{lemma}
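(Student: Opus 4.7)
The plan is to reduce both monotonicity statements to the single identity
\[
\frac{|\nu_{n+1}(\gamma)|}{|\nu_n(\gamma)|} \;=\; \frac{\Gamma(n+1+\gamma)\,\Gamma(n+2\gamma)}{\Gamma(n+\gamma)\,\Gamma(n+1+2\gamma)} \;=\; \frac{n+\gamma}{n+2\gamma},
\]
which follows immediately from $\Gamma(z+1)=z\Gamma(z)$. Using this together with $|\nu_0(\gamma)|=1$, I will telescope to obtain the closed-form product representation
\[
|\nu_n(\gamma)| \;=\; \prod_{k=0}^{n-1}\frac{k+\gamma}{k+2\gamma},
\qquad n\ge 1.
\]
The whole argument will hinge on analyzing the factors $\frac{k+\gamma}{k+2\gamma}$.

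First I would prove monotonicity in $n$. Since $\gamma>0$, each factor $\frac{k+\gamma}{k+2\gamma}$ lies in $(0,1]$; in particular, the ratio $\frac{n+\gamma}{n+2\gamma}$ computed above is strictly less than $1$ for every $n\ge 1$, so $|\nu_{n+1}(\gamma)|<|\nu_n(\gamma)|$. This gives the first claim on the nose.

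Next I would turn to monotonicity in $\gamma$. Differentiating each factor yields
\[
\frac{d}{d\gamma}\!\left(\frac{k+\gamma}{k+2\gamma}\right) \;=\; \frac{-k}{(k+2\gamma)^2},
\]
which is strictly negative for $k\ge 1$ and identically zero for $k=0$ (the $k=0$ factor equals the constant $\tfrac12$). Hence for any fixed $n\ge 1$ the product $|\nu_n(\gamma)|$ is a product of nonnegative factors each non-increasing in $\gamma$, so it is non-increasing in $\gamma$, and for $n\ge 2$ at least one factor is strictly decreasing, giving strict monotonicity. The borderline case $n=1$ simply gives the constant $\tfrac12$, consistent with the ``decreasing (in the weak sense)'' claim of the lemma.

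There is essentially no obstacle here: once the product formula is extracted, both parts are one-line calculations. The only mild subtlety is bookkeeping in the telescoping, and noting that the $k=0$ factor contributes a harmless constant $\tfrac12$, so the $n=1$ case should be read in the non-strict sense.
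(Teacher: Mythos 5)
Your proof is correct and takes essentially the same route as the paper: the paper's entire argument consists of the product representation $|\nu_n(\gamma)|=\prod_{k=0}^{n-1}\frac{\gamma+k}{2\gamma+k}$, from which it declares both monotonicities obvious, while you derive the same formula by telescoping the ratio $\frac{n+\gamma}{n+2\gamma}$ and then spell out the two easy checks. Your side remark that $|\nu_1(\gamma)|\equiv\frac12$ is constant in $\gamma$, so the $\gamma$-monotonicity at $n=1$ holds only in the weak sense, is a fair point the paper glosses over and is harmless for its later use.
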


\begin{pf}
Since $ |\nu_n(\ga)|= \prod_{k=0}^{n-1} \frac{\ga+k}{2\ga+k}$, it
is obvious.
\end{pf}

%
\begin{lemma}\label{lem:propertyp}
$p_n (\ga) >0$ for any $n \in\N$ and $\ga>0$. Moreover, for each
fixed $\ga< \frac{2}{3}$, $p_n(\ga)$ is increasing as a function of
$n$ for $n \ge1$ and $p_n (\ga) <\frac{1}{2}$ for any $n \in\N$.
If $\ga=\frac{2}{3}$, then $p_n(\ga)= \frac{1}{2}$ for all $n \in
\N$. For each fixed $\ga> \frac{2}{3}$, $p_n(\ga)$ is decreasing as
a function of $n$ for $n \ge1$.
\end{lemma}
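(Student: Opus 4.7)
The plan is to reduce every claim in the lemma to the single identity
\begin{equation*}
p_n(\ga) \;=\; \frac{1}{2} \;+\; \frac{\ga(3\ga-2)}{2(2n+3\ga)(2n+3\ga-2)},
\end{equation*}
after which all the monotonicity, sign and bound statements fall out by inspecting the sign of $3\ga-2$ and the obvious monotonicity of $(2n+3\ga)(2n+3\ga-2)$ in $n$.

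First I would read off the two leading coefficient ratios from the explicit formula for $J_n$. Writing $J_n(u)=\sum_m J_{n,m} u^m$, the leading and sub-leading coefficients are
\begin{equation*}
J_{n,n}=\frac{(-1)^n\,\Ga(2n+3\ga-1)}{n!\,\Ga(n+3\ga-1)}, \qquad
J_{n,n-1}=(-1)^{n-1}\,\frac{n(n+\ga-1)\,\Ga(2n+3\ga-2)}{n!\,\Ga(n+3\ga-1)},
\end{equation*}
so that (after cancellation of the common prefactor and using $\Ga(z+1)=z\Ga(z)$)
\begin{equation*}
\frac{J_{n,n-1}}{J_{n,n}}=-\frac{n(n+\ga-1)}{2n+3\ga-2}, \qquad
\frac{J_{n+1,n}}{J_{n+1,n+1}}=-\frac{(n+1)(n+\ga)}{2n+3\ga}.
\end{equation*}
This is the only place the special form of the Jacobi polynomials is used.

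Next I would plug these into the definition of $p_n$ and put everything over the common denominator $(2n+3\ga)(2n+3\ga-2)$. Expanding the numerator and collecting terms gives
\begin{equation*}
p_n(\ga)=\frac{2\bigl(n^2+3n\ga+3\ga^2-n-2\ga\bigr)}{(2n+3\ga)(2n+3\ga-2)},
\end{equation*}
and the cleanest way to finish is to subtract $1/2$: the numerator of $p_n-\tfrac12$ becomes $2A_n-B_n$ with $A_n$ the numerator above and $B_n=(2n+3\ga)(2n+3\ga-2)$, and this difference is exactly $\ga(3\ga-2)$, independent of $n$. This yields the displayed identity. I expect this algebra, while routine, to be the main technical obstacle, since one must track several cancellations correctly; I would sanity-check the formula at $\ga=2/3$ (where it must give $p_n\equiv 1/2$) and at $n=1$ (where $p_1=\tfrac{2(3\ga+1)}{3(3\ga+2)}$) before moving on.

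From the identity the lemma is immediate. The function $n\mapsto (2n+3\ga)(2n+3\ga-2)$ is strictly increasing on $n\ge 1$ for all $\ga>0$, so the $n$-dependent correction $\frac{\ga(3\ga-2)}{2(2n+3\ga)(2n+3\ga-2)}$ strictly shrinks in absolute value with $n$ and keeps a fixed sign equal to that of $3\ga-2$. Hence $p_n<1/2$ and $p_n\nearrow 1/2$ when $\ga<2/3$, $p_n\equiv 1/2$ when $\ga=2/3$, and $p_n>1/2$ with $p_n\searrow 1/2$ when $\ga>2/3$; in the last two cases positivity is automatic, and in the first case it follows because $p_n\ge p_1=\tfrac{2(3\ga+1)}{3(3\ga+2)}>0$ by the just-proved monotonicity. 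This completes the proof.
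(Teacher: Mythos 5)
Your proposal is correct and matches the paper's own argument: the paper likewise computes the two leading-coefficient ratios $J_{n+1,n}/J_{n+1,n+1}=-\frac{(n+1)(n+\ga)}{2n+3\ga}$ and $J_{n,n-1}/J_{n,n}=-\frac{n(n+\ga-1)}{2n+3\ga-2}$ and reduces everything to the identity $p_n(\ga)=\frac{1}{2}+\frac{\ga(3\ga-2)}{2(2n+3\ga)(2n+3\ga-2)}$, from which all sign, bound and monotonicity claims follow exactly as you say. Your derivation of the coefficient ratios and the algebra check (including $p_1(\ga)=\frac{2(3\ga+1)}{3(3\ga+2)}$) are accurate, so nothing is missing.
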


\begin{pf}
By the definition,
\begin{eqnarray*}
p_n(\ga)&=&1+\frac{-(n+1)(n+\gamma)}{2n+3\gamma}-\frac{-n(n+\gamma
-1)}{2n+3\gamma-2}
\\
&=& \frac{2n(n-1)+(6n-4)\ga+6\ga^2 }{(2n+3\gamma)(2n+3\gamma
-2)}=\frac{1}{2}+
\frac{-\ga+3/2\ga^2}{(2n+3\gamma)(2n+3\gamma-2)}.
\end{eqnarray*}
\upqed
\end{pf}

%
\begin{lemma}\label{lem:propertyp2}
For each fixed $n \in\N$, $p_n(\ga)$ is increasing as a function of
$\gamma$ for $\ga\ge\frac{1}{3}$.
\end{lemma}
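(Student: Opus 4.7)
The plan is to reuse the explicit formula derived in the proof of Lemma \ref{lem:propertyp}, namely
\[
p_n(\ga) \;=\; \frac{1}{2} + \frac{\ga(3\ga-2)}{2(2n+3\ga)(2n+3\ga-2)},
\]
and simply show that its $\ga$-derivative is non-negative once $\ga\ge\tfrac{1}{3}$ and $n\ge 1$. Since the prefactor $\tfrac12$ is constant, everything reduces to studying the sign of the derivative of the rational function $R_n(\ga):=\ga(3\ga-2)/[(2n+3\ga)(2n+3\ga-2)]$.

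First, I would introduce $u(\ga)=\ga(3\ga-2)$ and $v(\ga)=(2n+3\ga)(2n+3\ga-2)$, so that $u'=6\ga-2$ and $v'=6(2n+3\ga-1)$. Since $v(\ga)>0$ for all $\ga>0$ and $n\ge 1$ (as $2n+3\ga-2\ge 2\cdot 1 + 3\cdot\tfrac13 - 2 = 1 > 0$), the sign of $R_n'(\ga)$ equals the sign of the polynomial $N(n,\ga):=u'(\ga)v(\ga)-u(\ga)v'(\ga)$. The key step is to expand this expression and collect it as a polynomial in $n$.

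Expanding both products using the identity $(2n+3\ga)(2n+3\ga-2)=4n^2+4n(3\ga-1)+3\ga(3\ga-2)$, a short computation makes the cross-terms in $n^0$ cancel and leaves
\[
N(n,\ga) \;=\; 4n^{2}(3\ga-1) \;+\; 2n\,\bigl(9\ga^{2}-6\ga+2\bigr).
\]
The first summand is non-negative precisely because $\ga\ge\tfrac13$, and the second summand is strictly positive for every real $\ga$ since the discriminant of $9\ga^{2}-6\ga+2$ equals $36-72<0$. Hence $N(n,\ga)\ge 0$ (and is strictly positive for $\ga>\tfrac13$ or for any $\ga\ge\tfrac13$ whenever $n\ge 1$), which yields $p_n'(\ga)\ge 0$ on $[\tfrac13,\infty)$ as required.

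I do not foresee a substantive obstacle: the only thing to watch is the algebraic bookkeeping in collapsing $N(n,\ga)$ into the clean form above, and verifying that the threshold $\ga=\tfrac13$ that appears in the statement is exactly the one forced by the factor $3\ga-1$ in the leading coefficient in $n$. The quadratic-in-$\ga$ coefficient $9\ga^{2}-6\ga+2$ being unconditionally positive is what makes the argument work uniformly in $n\ge 1$.
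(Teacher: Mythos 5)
Your proposal is correct and takes essentially the same route as the paper: differentiate the explicit rational formula for $p_n(\ga)$ and check that the numerator of the $\ga$-derivative is positive for $\ga\ge\tfrac13$ (the paper writes that numerator as $2n\bigl(9\ga^2+6\ga(n-1)-2(n-1)\bigr)$, which is exactly your polynomial $4n^2(3\ga-1)+2n(9\ga^2-6\ga+2)$ regrouped). The only slip is a harmless factor of $2$: with your $u,v$ one actually gets $u'v-uv'=8n^2(3\ga-1)+4n(9\ga^2-6\ga+2)$, twice what you wrote, which changes nothing in the sign argument or the conclusion.
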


\begin{pf}
By the definition,
\[
\frac{d}{d\ga} p_n(\ga) = \frac{d}{d\ga} \biggl(
\frac{6\ga
^2+(6n-4)\ga+2n(n-1)}{(2n+3\ga)(2n+3\ga-2)} \biggr)
\]
and the numerator of the derivative is
%
\begin{eqnarray*}
&&(12\ga+6n-4) (2n+3\ga) (2n+3\ga-2)
\\
&&\quad{}-3\bigl(6\ga^2+(6n-4)\ga+2n(n-1)\bigr) (4n+6\ga-2)
\\
&&\qquad= 2n \bigl( 9\ga^2+6\ga(n-1)-2(n-1) \bigr) > 0
\end{eqnarray*}
for $\gamma\ge\frac{1}{3}$.
\end{pf}

%
\begin{lemma}\label{lem:propertyq1}
For each fixed $\frac{2}{3} \le\ga\le2$, $ |q_n (\ga)|$ is
decreasing as a function of $n$ for $n \ge2$. For each fixed $2 < \ga
\le\frac{7}{3}$, $|q_n(\ga)|$ is decreasing as a function of $n$ for
$n \ge3$. For each fixed $\ga< \frac{2}{3}$, $|q_n (\ga)|$ is
decreasing as a function of $n$ for $n \ge2$.
\end{lemma}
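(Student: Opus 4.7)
The plan is to derive a closed-form expression for $|q_n(\ga)|^2$ and then analyze its monotonicity in $n$ directly. Reading off the coefficient of $u^n$ from the defining formula for $J_n$ yields
\[
|J_{n,n}| = \frac{\Gamma(2n+3\ga-1)}{n!\,\Gamma(n+3\ga-1)}, \quad \text{so} \quad \left|\frac{J_{n,n}}{J_{n+1,n+1}}\right| = \frac{(n+1)(n+3\ga-1)}{(2n+3\ga)(2n+3\ga-1)}.
\]
Since $\mu$ has density proportional to $u^{\ga-1}(1-u)^{2\ga-1}$ on $[0,1]$, the polynomial $J_n$ must agree, up to sign, with the shifted Jacobi polynomial $P_n^{(2\ga-1,\ga-1)}(2u-1)$: both have the same degree and leading coefficient (by the standard formula for Jacobi leading coefficients) and both are orthogonal against $\mu$. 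Transporting the classical $L^2$ norm formula for Jacobi polynomials on $[-1,1]$ to $[0,1]$ gives
\[
E_\mu[J_n^2] = \frac{\Gamma(n+2\ga)\,\Gamma(n+\ga)}{B(\ga,2\ga)\,(2n+3\ga-1)\,n!\,\Gamma(n+3\ga-1)}.
\]
Combining the two pieces and telescoping $\Gamma$-factors produces the compact expression
\[
|q_n(\ga)|^2 = \frac{(n+1)(n+\ga)(n+2\ga)(n+3\ga-1)}{(2n+3\ga)^2\bigl((2n+3\ga)^2-1\bigr)}.
\]

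With this formula, monotonic decrease $|q_{n+1}|^2 \le |q_n|^2$ becomes, after clearing denominators, the nonnegativity of an explicit polynomial $P(n,\ga)$ whose coefficients are polynomials in $\ga$. The leading coefficient of $P(n,\ga)$ in $n$ is easily checked to be positive, so the inequality holds for all sufficiently large $n$; the content of the lemma is to pin down the precise threshold on $n$ as a function of $\ga$. I would verify this by evaluating the ratio $|q_{n+1}|^2/|q_n|^2$ at the critical starting values (namely $n = 2$ for $\ga \in (0, 2/3) \cup [2/3,2]$ and $n = 3$ for $\ga \in (2,7/3]$), reducing each case to a polynomial inequality in $\ga$ alone that can be checked by inspection of sign over the relevant interval.

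The main obstacle is the case analysis around the cut-off values $\ga = 2/3, 2, 7/3$. The threshold $\ga = 2/3$ already appeared in Lemma \ref{lem:propertyp}, suggesting that $P(n,\ga)$ ought to admit a rearrangement isolating a factor of $(3\ga - 2)$, in which case the split between $\ga < 2/3$ and $\ga \ge 2/3$ becomes algebraically transparent. The more delicate transition is at $\ga = 2$: one expects to verify that $|q_2| > |q_3|$ degrades continuously as $\ga$ approaches $2$ and reverses for $\ga$ slightly above, forcing a shift of the starting threshold from $n=2$ to $n=3$, and to check that $n=3$ then suffices all the way up to $\ga = 7/3$. Producing a clean factorization of $P(n,\ga)$, or equivalently analyzing the sign of the continuous derivative $\partial_n |q_n(\ga)|^2$ as a rational function of $n$ with parameter $\ga$, is the technical heart of the argument.
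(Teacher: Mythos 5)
There is a genuine gap, and it starts with your closed form. The expression you derive,
\[
|q_n(\ga)|^2=\frac{(n+1)(n+\ga)(n+2\ga)(n+3\ga-1)}{(2n+3\ga)^2\big((2n+3\ga)^2-1\big)},
\]
carries an extra factor $(n+2\ga)$ compared with the expression the paper computes and then works with throughout the appendix, namely $\frac{(n+1)(n+\ga)(n+3\ga-1)}{(2n+3\ga)^2((2n+3\ga)^2-1)}$. This is not a cosmetic discrepancy (a direct check at $\ga=1$, where $J_1=1-3u$, $J_2=1-8u+10u^2$, $E_{\mu}[J_1^2]=\tfrac12$, $E_{\mu}[J_2^2]=\tfrac13$, does support your norm ratio, so the mismatch is real and must at least be flagged and reconciled). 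Worse for your plan: your formula is the classical Jacobi recurrence coefficient rescaled to $[0,1]$, so it tends to $\tfrac1{16}$ as $n\to\infty$; at $\ga=1$ it equals $\frac{(n+1)(n+2)}{4(2n+3)^2}$, which is \emph{strictly increasing} in $n$. Hence, with the quantity you computed, the monotone decrease you set out to prove is false on the very range $\tfrac23\le\ga\le2$, your claim that the cleared-denominator polynomial $P(n,\ga)$ has positive leading coefficient (so that decrease holds for all large $n$) is wrong, and no case analysis at the thresholds $n=2,3$ can rescue the argument. The lemma, as it is used later (e.g.\ together with Lemma \ref{lem:sqrtestimate}, which needs $|q_n|\le\frac1{4\sqrt{n+\ga}}$), concerns the paper's decaying expression, so a proof has to work with that expression; your route dead-ends either way until the factor $(n+2\ga)$ is accounted for.

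Independently of the formula, the proposal never actually proves monotonicity: the entire verification, which you yourself call the technical heart, is deferred. Even structurally, checking the ratio only at the starting values $n=2$ (resp.\ $n=3$) plus an unquantified "for all sufficiently large $n$" statement does not give $|q_{n+1}|\le|q_n|$ for every $n$ in the stated ranges. The paper's proof does exactly the missing work: it writes
\[
\frac{|q_{n+1}(\ga)|^2}{|q_{n}(\ga)|^2}=\frac{(2n+3\ga)^2(2n+3\ga-1)}{(2n+3\ga+2)^2(2n+3\ga+3)}\cdot\frac{(n+2)(n+3\ga)(n+\ga+1)}{(n+1)(n+3\ga-1)(n+\ga)}
\]
and splits it into three sub-ratios, each shown to be at most $1$ by one-line identities such as $(2n+3\ga+3)(n+\ga)-(2n+3\ga)(n+\ga+1)=n$, $(2n+3\ga+2)(n+3\ga-1)-(2n+3\ga)(n+3\ga)=3\ga-2$, and $(2n+3\ga+2)(n+1)-(2n+3\ga-1)(n+2)=n-3\ga+4$; the hypotheses $\ga\ge\tfrac23$, $\ga\le\frac{n+4}{3}$ (hence $n\ge3$ when $2<\ga\le\tfrac73$), and the separate pairing used for $\ga<\tfrac23$ come directly out of these identities, valid for every $n$ in range. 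That factor-by-factor step inequality is what your sketch is missing, and it is the part that cannot be replaced by inspecting initial values and asymptotics.
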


\begin{pf}
First, note that by the definition
\begin{eqnarray*}
q_n(\ga) & = &- \frac{\Gamma(2n+3\ga-1)}{n!\Gamma(n+3\ga-1)}
\frac
{(n+1)!\Gamma(n+3\ga)}{\Gamma(2n+3\ga+1)}
\\
&&\ \ \ {} \times\sqrt{\frac{\Gamma(n+1+\gamma)(2n+3\ga-1)n!\Gamma
(n+3\ga
-1)}{(2n+3\ga+1)(n+1)!\Gamma(n+3\ga) \Gamma(n+\gamma)}}
\\
& =& - \frac{(n+1)(n+3\ga-1)}{(2n+3\ga)(2n+3\ga-1)} \sqrt{\frac
{(n+\gamma)(2n+3\ga-1)}{(2n+3\ga+1)(n+1)(n+3\ga-1)}}
\\
& =& - \frac{1}{(2n+3\ga)} \sqrt{\frac{(n+3\ga-1)(n+1) (n+\gamma
)}{(2n+3\ga+1)(2n+3\ga-1)}}. 
\end{eqnarray*}

For each $n \in\N$, we have
\[
\frac{|q_{n+1}(\ga)|^2}{|q_{n}(\ga)|^2} =\frac{(2n+3\ga)^2(2n+3\ga
-1)}{(2n+3\ga+2)^2(2n+3\ga+3)}\frac{(n+2)(n+3\ga)(n+\ga
+1)}{(n+1)(n+3\ga-1)(n+\ga)}.
\]
For any $\ga>0$,
$(2n+3\ga+3)(n+\ga)- (2n+3\ga)(n+\ga+1) = n$, so
\[
\frac{(2n+3\ga)}{(2n+3\ga+3)}\frac{(n+\ga+1)}{(n+\ga)} < 1.
\]
On the other hand, for $2/3 \le\ga$, since $(2n+3\ga+2)(n+3\ga-1)-
(2n+3\ga)(n+3\ga) = 3\ga-2$,
\[
\frac{(2n+3\ga)}{(2n+3\ga+2)}\frac{(n+3\ga)}{(n+3\ga-1)} \le1.
\]
In the same manner, $(2n+3\ga+2)(n+1)- (2n+3\ga-1)(n+2) = n-3\ga+4$,
then if $\ga\le\frac{n+4}{3}$, then
\[
\frac{(2n+3\ga-1)}{(2n+3\ga+2)}\frac{(n+2)}{(n+1)} \le1.
\]
Next, we assume that $\ga< 2/3$. As in the same way,
since $(2n+3\ga+2)(n+3\ga-1)- (2n+3\ga-1)(n+3\ga) = n+6\ga-2$, for
any $n \ge2$,
\[
\frac{(2n+3\ga-1)}{(2n+3\ga+2)}\frac{(n+3\ga)}{(n+3\ga-1)} < 1
\]
and $(2n+3\ga+2)(n+1)- (2n+3\ga)(n+2) = 2-3\ga$,
\[
\frac{(2n+3\ga)}{(2n+3\ga+2)}\frac{(n+2)}{(n+1)} < 1.
\]
\upqed
\end{pf}

%
\begin{lemma}\label{lem:propertyq2}
For each fixed $n \ge3$, $|q_n(\ga)|$ is decreasing as a function of
$\gamma$ for $\ga>0$ and $|q_2 (\ga) |$ is decreasing as a function
of $\gamma$ for $\ga\ge\frac{1}{10}$.
\end{lemma}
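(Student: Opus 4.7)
The plan is to work with $\phi_n(\ga) := |q_n(\ga)|^2$ rather than $|q_n(\ga)|$ itself in order to avoid the square root. Starting from the closed form
\[
\phi_n(\ga) = \frac{(n+3\ga-1)(n+1)(n+\ga)}{(2n+3\ga)^2(2n+3\ga+1)(2n+3\ga-1)}
\]
derived in the proof of Lemma \ref{lem:propertyq1}, logarithmic differentiation gives
\[
\frac{\phi_n'(\ga)}{\phi_n(\ga)} = \frac{3}{n+3\ga-1} + \frac{1}{n+\ga} - \frac{6}{2n+3\ga} - \frac{3}{2n+3\ga+1} - \frac{3}{2n+3\ga-1},
\]
and the task is to show this is negative in the relevant range of $\ga$.

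The key simplification is to pair the two denominators $2n+3\ga-1$ and $2n+3\ga+1$ and apply the strict convexity of $x\mapsto 1/x$:
\[
\frac{1}{2n+3\ga-1}+\frac{1}{2n+3\ga+1} > \frac{2}{2n+3\ga}.
\]
This yields the strict upper bound
\[
\frac{\phi_n'(\ga)}{\phi_n(\ga)} < \frac{3}{n+3\ga-1} + \frac{1}{n+\ga} - \frac{12}{2n+3\ga},
\]
so it suffices to show the right-hand side is $\le 0$. Setting $t = 3\ga$ and clearing the (positive) denominators, a short expansion shows that this inequality is equivalent to the polynomial inequality $P_n(t) := 4n^2 + 8nt + 2t^2 - 10n - 3t \ge 0$.

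The verification of $P_n(t)\ge 0$ is the only remaining step, and it splits naturally into two cases. For $n\ge 3$, the bound $4n^2 - 10n = 2n(2n-5) \ge 6$, combined with $t(8n-3)\ge 0$ and $2t^2\ge 0$, immediately yields $P_n(t)\ge 6 > 0$ for all $t\ge 0$, covering the full range $\ga>0$. For $n=2$, however, $P_2(t) = 2t^2 + 13t - 4$ has a unique positive root at $(\sqrt{201}-13)/4 \approx 0.294$, so the method only gives $P_2(t)\ge 0$ for $t\ge 0.3$, i.e.\ for $\ga\ge 1/10$, which matches the threshold claimed in the lemma. This $n=2$ case is the mild obstacle: the cut-off $\ga\ge 1/10$ is essentially forced by the fact that the positive root of $P_2$ lies just below $0.1$, so pushing the threshold lower would require a tighter treatment of the three-term tail $\tfrac{6}{2n+3\ga}+\tfrac{3}{2n+3\ga\pm 1}$ than the plain convexity bound used above.
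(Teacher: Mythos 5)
Correct, and essentially the paper's own approach: both arguments show that $\frac{d}{d\gamma}|q_n(\gamma)|^2<0$ and reduce this, after clearing positive denominators, to the positivity of one and the same quadratic --- your $P_n(t)$ with $t=3\gamma$ is exactly the factor $18\gamma^2+(24n-9)\gamma+(4n^2-10n)$ appearing in the paper's expanded numerator --- handled by the identical case split ($n\ge 3$ for all $\gamma>0$; $n=2$ for $\gamma\ge\frac{1}{10}$). The only difference is how the criterion is reached: you use logarithmic differentiation together with the convexity bound $\frac{1}{2n+3\gamma-1}+\frac{1}{2n+3\gamma+1}>\frac{2}{2n+3\gamma}$, which amounts to discarding the additional positive term $3\gamma+2n^2+4n$ that the paper's exact quotient-rule computation retains; neither proof needs that term at the stated threshold.
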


\begin{pf}
Instead of $|q_n(\ga)|$ itself, we will consider the derivative of
$|q_n(\ga)|^2$.
By the definition,
\[
\frac{d}{d\ga} \bigl|q_n(\ga)\bigr|^2= \sqrt{n+1}
\frac{d}{d\ga} \biggl(\frac{(n+\ga)(n+3\ga-1)}{(2n+3\ga)^4 -
(2n+3\ga)^2} \biggr).
\]
The numerator of the derivative is
\begin{eqnarray*}
&& (4n+6\ga-1) \bigl((2n+3\ga)^4 - (2n+3\ga)^2\bigr)
\\
&&\quad{} - (n+\ga) (n+3\ga-1) \bigl( 12(2n+3\ga)^3-6(2n+3\ga)
\bigr)
\\
&&\qquad= - (2n+3\ga) \bigl( \bigl(18\ga^2+(24n-9)\ga+
\bigl(4n^2-10n\bigr)\bigr) (2n+3\ga)^2
\\
&&\hspace*{223pt}{}+ 3
\ga+2n^2+4n \bigr)
\end{eqnarray*}
which is negative for any $\ga>0$ if $n \ge3$, and at least for $\ga
\ge\frac{1}{10}$ if $n =2$.
\end{pf}

%
\begin{lemma}\label{lem:sqrtestimate}
For any $\ga\ge\frac{1}{5}$ and $n \in\N$, $|q_n(\ga)| \le\frac
{1}{4\sqrt{n+\ga}}$.
\end{lemma}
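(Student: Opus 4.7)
The plan is to square both sides and work from the explicit formula for $|q_n(\ga)|$ that was derived in the proof of Lemma~\ref{lem:propertyq1}, namely
\[
|q_n(\ga)|^2 = \frac{(n+3\ga-1)(n+1)(n+\ga)}{(2n+3\ga)^2\bigl((2n+3\ga)^2-1\bigr)},
\]
where I have used $(2n+3\ga+1)(2n+3\ga-1)=(2n+3\ga)^2-1$. The claim $|q_n(\ga)|\le\frac{1}{4\sqrt{n+\ga}}$ is thus equivalent to the purely algebraic inequality
\[
16(n+\ga)^{2}(n+1)(n+3\ga-1)\ \le\ (2n+3\ga)^{2}\bigl((2n+3\ga)^{2}-1\bigr).
\]

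First, I would deal with the factor $(n+1)(n+3\ga-1)$ by AM-GM: since $(n+1)+(n+3\ga-1)=2n+3\ga$, we have $(n+1)(n+3\ga-1)\le\frac{(2n+3\ga)^{2}}{4}$. After canceling $(2n+3\ga)^2$, it therefore suffices to prove
\[
4(n+\ga)^{2}\ \le\ (2n+3\ga)^{2}-1.
\]
Second, I would exploit the difference-of-squares factorization
\[
(2n+3\ga)^{2}-\bigl(2(n+\ga)\bigr)^{2}=\bigl((2n+3\ga)-2(n+\ga)\bigr)\bigl((2n+3\ga)+2(n+\ga)\bigr)=\ga(4n+5\ga),
\]
which reduces the task to verifying $\ga(4n+5\ga)\ge 1$, i.e.\ $4n\ga+5\ga^{2}\ge 1$.

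Third, under the hypotheses $n\ge 1$ and $\ga\ge\tfrac{1}{5}$, this is immediate: both $4n\ga$ and $5\ga^{2}$ are non-decreasing in $\ga$ and in $n$, so
\[
4n\ga+5\ga^{2}\ \ge\ 4\cdot1\cdot\tfrac{1}{5}+5\cdot\tfrac{1}{25}=\tfrac{4}{5}+\tfrac{1}{5}=1.
\]
(Note that equality holds at $(n,\ga)=(1,\tfrac{1}{5})$, so the threshold $\ga\ge\tfrac15$ is sharp for the particular chain of inequalities used here.) There is no serious obstacle; the only thing to notice is the lucky pairing $(n+1)+(n+3\ga-1)=2n+3\ga=2(n+\ga)+\ga$, which makes both the AM-GM step and the subsequent difference-of-squares step produce exactly the right quantity.
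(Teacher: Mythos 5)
Your proposal is correct and is essentially the paper's own argument: the paper also bounds $\sqrt{(n+1)(n+3\ga-1)}\le\frac{2n+3\ga}{2}$ by AM--GM (the same pairing) and then reduces the claim to $(2n+3\ga)^2-1\ge 4(n+\ga)^2$, which it verifies by noting $4n\ga+5\ga^2-1\ge(5\ga-1)(\ga+1)\ge0$ for $n\ge1$, $\ga\ge\frac15$ — the same computation you do via the difference-of-squares factorization. The only difference is cosmetic: you square first and keep everything algebraic, while the paper manipulates the square-root expression directly.
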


\begin{pf}
For any positive numbers $a,b$, $\sqrt{ab} \le\frac{1}{2} (a+b)$. Therefore,
\[
\bigl|q_n(\ga)\bigr|= \frac{\sqrt{n+\ga}}{(2n+3\ga)}\frac{\sqrt
{(n+1)(n+3\ga-1)}}{\sqrt{(2n+3\ga)^2-1}} \le
\frac{\sqrt{n+\ga
}}{2\sqrt{(2n+3\ga)^2-1}}.
\]
Now, for any $\ga\ge\frac{1}{5}$,
\begin{eqnarray*}
(2n+3\ga)^2-1&=&4n^2+12n\ga+9\ga^2-1 \ge4(n+
\ga)^2 + (5\ga-1) (\ga+1)
\\
&\ge& 4(n+\ga)^2.
\end{eqnarray*}
\upqed
\end{pf}

\subsection{Proof of Proposition \texorpdfstring{\protect\ref{prop:a}}{A.2} for \texorpdfstring{$\gamma\ge\frac{2}{3}$}{$gamma>=\frac{2}{3}$}}

Here, we give a proof of Proposition~\ref{prop:a} for the case $\ga
\ge\frac{2}{3}$.

%
\begin{lemma}\label{lem:4case1}
For any fixed $\ga\ge2$,
$\frac{1}{4\sqrt{4+\ga}}+\frac{1}{4\sqrt{3+\ga}} < \frac
{1}{1+2\nu_4(\ga)}- p_4(\ga)$.
\end{lemma}

\begin{pf}
By the definition, for any $\ga\ge2$,
\begin{eqnarray*}
&&\frac{1}{1+2\nu_4(\ga)}- p_4(\ga)
\\
&&\qquad = \frac{2(2\ga+1)(2\ga
+3)}{2(2\ga+1)(2\ga+3)+(\ga+2)(\ga+3)}-
\frac{6\ga^2+20\ga
+24}{(3\ga+8)(3\ga+6)}
\\
&&\qquad = \frac{2 \ga(46 + 67 \ga+ 29 \ga^2 + 3 \ga^3)}{3 (\ga
+2) (3 \ga
+8) (3\ga+4) (\ga+1)}
\\
&&\qquad = \frac{2}{9}+\frac{2 (-64 - 30 \ga+ 43 \ga^2 + 24 \ga
^3)}{9 (\ga
+2) (3 \ga+8) (3\ga+4) (\ga+1)} >\frac{2}{9}.
\end{eqnarray*}
On the other hand, for any $\ga\ge2$,\vspace*{-1pt}
\[
\frac{1}{4\sqrt{4+\ga}}+\frac{1}{4\sqrt{3+\ga}} < \frac
{1}{4\sqrt{6}}+
\frac{1}{4\sqrt{5}} <\frac{2}{9},
\]
and the lemma follows.\vspace*{-1pt}
\end{pf}

%
\begin{lemma}\label{lem:6}
For\vspace*{-1pt} any fixed $\frac{2}{3} \le\ga\le2$,
$|q_6 (\ga)|+|q_5(\ga)| < \frac{1}{1+2\nu_6 (\ga)}- p_6(\ga)$.
\end{lemma}

\begin{pf}
By Lemmas~\ref{lem:propertynu}, \ref{lem:propertyp2} and~\ref
{lem:propertyq2},\vspace*{-1pt}
\[
\sup_{{2}/{3} \le\ga\le2}\bigl(\bigl|q_6 (\ga)\bigr|+\bigl|q_5(
\ga)\bigr|\bigr)=|q_6|\biggl(\frac{2}{3}\biggr)+|q_5|
\biggl(\frac{2}{3}\biggr)
\]
and\vspace*{-1pt}
\[
\inf_{{2}/{3} \le\ga\le2}\biggl(\frac{1}{1+2\nu_6 (\ga)}-
p_6(\ga)
\biggr) \ge\frac{1}{1+2\nu_6({2}/{3})}-p_6(2).
\]
Then the exact calculation shows\vspace*{-1pt}
\[
\bigg|q_6\biggl(\frac{2}{3}\biggr)\bigg|+\bigg|q_5\biggl(
\frac{2}{3}\biggr)\bigg| < \frac{1}{1+2\nu_6(
{2}/{3})}-p_6(2).
\]
\upqed
\end{pf}

%
\begin{lemma}\label{lem:smalla}
For $\frac{2}{3} \le\ga\le2$, let $\a_2(\ga)=|q_2(\ga)|
(\frac{1}{1+2\nu_2(\ga)}-p_2(\ga) )^{-1}$,
{\spaceskip=0.155em plus 0.05em minus 0.02em $\a_3(\ga)=\frac
{{1}/{(1+2\nu_4(\ga))}-p_4(\ga) }{2|q_3(\ga)|}$, $\a_4(\ga)=2
|q_4(\ga)| (\frac{1}{1+2\nu_4(\ga)}-p_4(\ga) )^{-1}$ and
\mbox{$\a_5(\ga)=1$}.} Then\vspace*{-1pt}
%
%
\begin{eqnarray}
\label{eq:simple1} \bigl(1+2\nu_2(\ga)\bigr) \biggl(p_2(\ga)+
\frac{|q_2(\ga)|}{\a_2(\ga)} \biggr)&=& 1,
\\
\label{eq:simple2} \bigl(1+2\nu_4(\ga)\bigr) \biggl(p_4(
\ga)+\frac{|q_4(\ga)|}{\a_4(\ga)}+\bigl|q_3(\ga)\bigr|\a_3(\ga
)\biggr)&=& 1
\end{eqnarray}
and\vspace*{-1pt}
%
%
\begin{equation}
\label{eq:simple3} \quad\max_{n=3,5} \biggl\{\bigl(1+2\nu_n(
\ga)\bigr) \biggl(p_n(\ga)+\frac{|q_n(\ga)|}{\a
_n(\ga)}+\bigl|q_{n-1}(\ga)\bigr|
\a_{n-1}(\ga)\biggr) \biggr\} < 1
\end{equation}
hold.\vspace*{-1pt}
\end{lemma}

\begin{pf}
Equations (\ref{eq:simple1}) and (\ref{eq:simple2}) hold by the
choice of $\{\a_n(\ga)\}_{n=2}^4$. Therefore, we only need to show
(\ref{eq:simple3}).
For $n=3$,
\begin{eqnarray*}
&& \bigl(1+2\nu_3(\ga)\bigr) \biggl(p_3(\ga)+
\frac{|q_3(\ga)|}{\a_3(\ga)}+\bigl|q_2(\ga)\bigr|\a_2(\ga
)\biggr)
\\
&&\qquad= \frac{3\ga}{4\ga+2}\biggl(p_3(\ga)+2\bigl|q_3(
\ga)\bigr|^2 \biggl(\frac
{1}{1+2\nu_4(\ga)}-p_4(\ga)
\biggr)^{-1}
\\
&&\hspace*{106pt}{} +\bigl|q_2(\ga)\bigr|^2 \biggl(\frac
{1}{1+2\nu_2(\ga)}-p_2(
\ga) \biggr)^{-1}\biggr),
\\
&&\frac{3\ga}{4\ga+2}p_3(\ga) =\frac{3\ga}{4\ga+2}
\frac{6\ga
^2+14\ga+12}{(3\ga+6)(3\ga+4)}=\frac{\ga(3\ga^2+7\ga+6)}{6 \ga
^3 + 23 \ga^2 +26 \ga+8} < \frac{1}{2},
\\
&&\frac{6\ga}{4\ga+2}\bigl|q_3(\ga)\bigr|^2 \biggl(
\frac{1}{1+2\nu_4(\ga
)}-p_4(\ga) \biggr)^{-1}
\\
&&\qquad\le\frac{3\ga}{2\ga+1} \frac{3 (\ga+2) (3 \ga+8) (3\ga+4)
(\ga+1)}{2 \ga(46 + 67 \ga+ 29 \ga^2 + 3 \ga^3)}\biggl|q_3\biggl(
\frac
{2}{3}\biggr)\biggr|^2
\\
&&\qquad= \biggl( \frac{27}{4} + \frac{351}{124 (1 + 2 \ga)} -
\frac{9
(1052 + 1388 \ga+ 471 \ga^2)}{ 62 (46 + 67 \ga+ 29 \ga^2 + 3 \ga
^3)} \biggr) \frac{11}{756}
\\
&&\qquad< \biggl(\frac{27}{4}+\frac{351}{124}\biggr)
\frac{11}{756} = \frac{121}{868},
\\
&&\frac{3\ga}{4\ga+2}\bigl|q_2(\ga)\bigr|^2 \biggl(
\frac{1}{1+2\nu_2(\ga
)}-p_2(\ga) \biggr)^{-1}
\\
&&\qquad\le\frac{3\ga}{4\ga+2} \frac{(3\ga+4)(3\ga+2)}{3\ga}
\biggl|q_2\biggl(
\frac{2}{3}\biggr)\biggr|^2
\\
&&\qquad= \biggl(\frac{27}{8} +
\frac{9 \ga}{4} + \frac
{5}{8 (1 + 2 \ga)}\biggr)\frac{ 2 }{105} <
\frac{17}{105}
\end{eqnarray*}
for the last inequality we use the fact that $\ga\le2$.
For $n=5$,\vspace*{-1pt}
\begin{eqnarray*}
&& \bigl(1+2\nu_5(\ga)\bigr) \biggl(p_5(\ga)+
\frac{|q_5(\ga)|}{\a_5(\ga)}+\bigl|q_4(\ga)\bigr|\a_4(\ga
)\biggr)
\\[-1pt]
&&\qquad= \bigl(1+2\nu_5(\ga)\bigr) \biggl(p_5(
\ga)+\bigl|q_5(\ga)\bigr|+2\bigl|q_4(\ga)\bigr|^2 \biggl(
\frac
{1}{1+2\nu_4(\ga)}-p_4(\ga) \biggr)^{-1}\biggr).
\end{eqnarray*}
Here,\vspace*{-1pt}
\begin{eqnarray*}
&& \bigl(1+2\nu_5(\ga)\bigr)p_5(\ga) \le\biggl(1+2
\nu_5\biggl(\frac{2}{3}\biggr)\biggr)p_5(2) =
\frac{15}{26} \frac{29}{56} < \frac{1}{3},
\\[-1pt]
&& \bigl(1+2\nu_5(\ga)\bigr)\bigl|q_5(\ga)\bigr|
\le\bigl|q_5(\ga)\bigr| \le\biggl|q_5\biggl(\frac
{2}{3}
\biggr)\biggr|=\sqrt{\frac{17}{1716}} <\frac{1}{10},
\\[-1pt]
&& \bigl(1+2\nu_5(\ga)\bigr)2\bigl|q_4(\ga)\bigr|^2
\biggl(\frac{1}{1+2\nu_4(\ga
)}-p_4(\ga) \biggr)^{-1}
\\
&&\qquad\le\frac{25}{18} 2 \bigl(1+2\nu_3(\ga)
\bigr)\bigl|q_3(\ga)\bigr|^2 \biggl(\frac
{1}{1+2\nu_4(\ga)}-p_4(
\ga) \biggr)^{-1} \le\frac{25}{18}\frac
{121}{868} <
\frac{2}{9}.\qquad
\end{eqnarray*}
\upqed
\end{pf}

%
\begin{lemma}\label{lem:biga}
For $ \ga\ge2$, let $\a_2(\ga)=|q_2(\ga)| (\frac{1}{1+2\nu
_2(\ga)}-p_2(\ga) )^{-1}$, and $\a_3(\ga)=1$. Then
%
%
\begin{equation}
\label{eq:simple4} \bigl(1+2\nu_2(\ga)\bigr) \biggl(p_2(\ga)+
\frac{|q_2(\ga)|}{\a_2(\ga)} \biggr)= 1
\end{equation}
and
%
%
\begin{equation}
\label{eq:simple5} \biggl\{\bigl(1+2\nu_3(\ga)\bigr)
\biggl(p_3(\ga)+\frac{|q_3(\ga)|}{\a_3(\ga
)}+\bigl|q_2(\ga)\bigr|
\a_2(\ga)\biggr) \biggr\} < 1
\end{equation}
hold.
\end{lemma}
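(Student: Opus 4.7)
The first identity (\ref{eq:simple4}) is immediate from the definition of $\a_2(\ga)$: we have $|q_2(\ga)|/\a_2(\ga) = \frac{1}{1+2\nu_2(\ga)} - p_2(\ga)$, so $(1+2\nu_2(\ga))(p_2(\ga) + |q_2(\ga)|/\a_2(\ga)) = 1$ by direct substitution. No work is needed here beyond observing that the construction is tautological.

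For the strict inequality (\ref{eq:simple5}), the plan is to bound each of the three summands $(1+2\nu_3)p_3$, $(1+2\nu_3)|q_3|$, and $(1+2\nu_3)|q_2|\a_2$ separately by explicit constants whose sum is less than $1$. First, a short computation using the product formula for $\nu_n$ gives $1+2\nu_3(\ga) = \frac{3\ga}{2(2\ga+1)}$, and from the derivation inside the proof of Lemma \ref{lem:smalla} one has $\frac{1}{1+2\nu_2(\ga)} - p_2(\ga) = \frac{3\ga}{(3\ga+4)(3\ga+2)}$, so that $|q_2(\ga)|\a_2(\ga) = |q_2(\ga)|^2 \cdot \frac{(3\ga+4)(3\ga+2)}{3\ga}$.

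For the first summand, substituting the explicit formulas yields the rational function $(1+2\nu_3(\ga))p_3(\ga) = \frac{\ga(3\ga^2+7\ga+6)}{(2\ga+1)(\ga+2)(3\ga+4)}$, and I would verify algebraically that this is strictly less than $\frac{1}{2}$ for every $\ga>0$ via the identity $(2\ga+1)(\ga+2)(3\ga+4) - 2\ga(3\ga^2+7\ga+6) = 9\ga^2+14\ga+8 > 0$. For the second summand, since $|q_3(\ga)|$ is decreasing in $\ga$ by Lemma \ref{lem:propertyq2} and $(1+2\nu_3(\ga)) \le \frac{3}{4}$ (the monotone limit of $\frac{3\ga}{2(2\ga+1)}$ as $\ga\to\infty$), the term is dominated by $\frac{3}{4}|q_3(2)|$, which is a small explicit constant. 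For the third summand, plugging in the formula for $|q_2(\ga)|^2$ produces a rational function whose individual factors $\frac{3\ga+1}{2\ga+1}$, $\frac{\ga+2}{3\ga+5}$, $\frac{3\ga+2}{3\ga+4}$, $\frac{1}{\ga+1}$ are each monotone on $\ga \ge 2$, so the product is easily bounded by its value at $\ga=2$ (or by the trivial product of the corresponding one-sided bounds), giving a constant of order $\tfrac{1}{11}$.

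The main obstacle is essentially mechanical: one must verify that the sum of the three explicit upper bounds (a number bounded by $\tfrac12$ plus two terms each below $\tfrac1{10}$) is strictly less than $1$, and then package the $\ga$-uniformity carefully using the monotonicity lemmas \ref{lem:propertynu}, \ref{lem:propertyp2}, \ref{lem:propertyq1}, \ref{lem:propertyq2}. Since the limit of the left-hand side as $\ga\to\infty$ is $\tfrac{1}{2}$ and the value at $\ga=2$ is numerically well below $1$, no delicate estimate is required, and the conclusion follows directly.
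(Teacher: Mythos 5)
Your proof is correct and takes essentially the same route as the paper: the identity (\ref{eq:simple4}) is tautological, and for (\ref{eq:simple5}) the paper uses the very same three-term split, showing $(1+2\nu_3(\ga))p_3(\ga)=\frac{\ga(3\ga^2+7\ga+6)}{6\ga^3+23\ga^2+26\ga+8}<\frac12$, bounding $(1+2\nu_3(\ga))|q_3(\ga)|\le\frac34|q_3(2)|<\frac19$ by the same monotonicity lemmas, and bounding the $|q_2(\ga)|^2\a_2(\ga)$-term by $\frac{1}{4\ga+2}\le\frac1{10}$. Only a cosmetic caveat: since the factors in your third term are not all monotone in the same direction, the ``value at $\ga=2$'' shortcut is not justified by itself, but your parenthetical alternative (product of one-sided bounds, giving roughly $\frac1{11}$) is rigorous and matches the paper's estimate.
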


\begin{pf}
Equation (\ref{eq:simple4}) holds by the choice of $\a_2(\ga)$.
Therefore, we only need to show (\ref{eq:simple5}). Note that
\begin{eqnarray*}
&& \bigl(1+2\nu_3(\ga)\bigr) \biggl(p_3(\ga)+
\frac{|q_3(\ga)|}{\a_3(\ga)}+\bigl|q_2(\ga)\bigr|\a_2(\ga
)\biggr)
\\
&&\qquad= \frac{3\ga}{4\ga+2}\biggl(p_3(\ga)+\bigl|q_3(\ga
)\bigr|
+\bigl|q_2(\ga)\bigr|^2 \biggl(\frac{1}{1+2\nu_2(\ga)}-p_2(
\ga) \biggr)^{-1}\biggr).
\end{eqnarray*}
Then, since
\begin{eqnarray*}
\hspace*{-3pt}&&\frac{3\ga}{4\ga+2}p_3(\ga) =\frac{3\ga}{4\ga+2}
\frac{6\ga
^2+14\ga+12}{(3\ga+6)(3\ga+4)}=\frac{\ga(3\ga^2+7\ga+6)}{6 \ga
^3 + 23 \ga^2 +26 \ga+8} < \frac{1}{2},
\\
\hspace*{-3pt}&&\frac{3\ga}{4\ga+2}\bigl|q_3(\ga)\bigr| \le\frac{3}{4}
\bigl|q_3(2)\bigr| = \frac
{1}{3} \sqrt{\frac{10}{143}} <
\frac{1}{9},
\\
\hspace*{-3pt}&& \frac{3\ga}{4\ga+2}\bigl|q_2(\ga)\bigr|^2 \biggl(
\frac{1}{1+2\nu_2(\ga
)}-p_2(\ga) \biggr)^{-1}
\\
\hspace*{-3pt}&&\qquad\le\frac{3\ga}{4\ga+2} \frac{(3\ga+4)(3\ga+2)}{3\ga}
\bigl |q_2(\ga
)\bigr |^2 = \frac{(3\ga+1)(3\ga+2)(3\ga+6)}{(4\ga+2)(3\ga+3)(3\ga
+4)(3\ga+5)}
\\
\hspace*{-3pt}&&\qquad< \frac{1}{4\ga+2} \le\frac{1}{10},
\end{eqnarray*}
the lemma follows.
\end{pf}

\begin{pf*}{Proof of Proposition~\ref{prop:a} for $\ga\ge\frac{2}{3}$}
First, assume that $\ga\ge2$. Take $\a_2(\ga)= \frac{|q_2(\ga
)|}{{1}/{(1+2\nu_2(\ga))}-p_2(\ga)}+ \epsilon(\ga) $ where
$\epsilon(\ga) >0$ will be specified later, and $\a_n(\ga)=1$ for
$n \ge3$. By Lemmas~\ref{lem:propertynu}, \ref{lem:propertyp}, \ref
{lem:sqrtestimate} and~\ref{lem:4case1},
\begin{eqnarray*}
&&\sup_{n \ge4} \biggl\{ \bigl(1+2\nu_n(\ga)\bigr)
\biggl(p_n(\ga)+\frac{|q_n(\ga
)|}{\a_n(\ga)}+\bigl|q_{n-1}(\ga)\bigr|
\a_{n-1}(\ga)\biggr) \biggr\}
\\
&&\qquad= \sup_{n \ge4} \bigl\{ \bigl(1+2\nu_n(\ga)
\bigr) \bigl(p_n(\ga)+\bigl|q_n(\ga)\bigr|+\bigl|q_{n-1}(\ga
)\bigr|
\bigr) \bigr\}
\\
&&\qquad\le\sup_{n \ge4} \biggl\{ \bigl(1+2\bigl|\nu_n(
\ga)\bigr|\bigr) \biggl(p_n+\frac{1}{4\sqrt{n+\ga}}+\frac
{1}{4\sqrt{n-1+\ga}}\biggr)
\biggr\}
\\
&&\qquad= \bigl(1+2\nu_4(\ga)\bigr) \biggl(p_4(\ga)+
\frac{1}{4\sqrt{4+\ga}}+\frac
{1}{4\sqrt{3+\ga}}\biggr) <1
\end{eqnarray*}
holds. On the other hand, by Lemma~\ref{lem:biga}, for sufficiently
small $\epsilon(\ga)>0$
\[
\bigl(1+2\nu_2(\ga)\bigr) \biggl(p_2(\ga)+
\frac{|q_2(\ga)|}{\a_2(\ga)} \biggr) <1
\]
and
\[
\bigl(1+2\nu_3(\ga)\bigr) \biggl(p_3(\ga)+
\frac{|q_3(\ga)|}{\a_3(\ga)}+ \bigl|q_2(\ga)\bigr|\a_2(\ga
)\biggr) <1
\]
hold. Therefore, the proof is complete. The same argument works for
the case $\frac{2}{3} \le\ga\le2$ with Lemmas~\ref{lem:smalla}
and~\ref{lem:4case1}.
\end{pf*}


\subsection{Proof of Proposition \texorpdfstring{\protect\ref{prop:b}}{A.3} for \texorpdfstring{$\gamma\ge\frac{2}{3}$}{$gamma>=\frac{2}{3}$}}

Here, we give a proof of Proposition~\ref{prop:b} for the case $\ga
\ge\frac{2}{3}$.

%
\begin{lemma}\label{lem:4case2}
For any fixed $\ga\ge2$,
$\frac{1}{4\sqrt{3+\ga}}+\frac{1}{4\sqrt{2+\ga}} < \frac
{1}{1-\nu_3(\ga)}- p_3(\ga)$.
\end{lemma}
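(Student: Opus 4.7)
The strategy mirrors Lemma \ref{lem:4case1}: I would compute both sides of the desired inequality as explicit functions of $\ga$, then split $[2,\infty)$ into a ``large'' regime where a crude bound suffices and a ``small'' regime near $\ga=2$ where one needs to be more careful.

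First I would use the product expression $|\nu_n(\ga)|=\prod_{k=0}^{n-1}\frac{\ga+k}{2\ga+k}$ to obtain $\nu_3(\ga) = -\frac{\ga+2}{4(2\ga+1)}$, so that $\frac{1}{1-\nu_3(\ga)} = \frac{8}{9}-\frac{4}{9(3\ga+2)}$. Combining with the identity $p_n(\ga)=\frac{1}{2}+\frac{(3/2)\ga^2-\ga}{(2n+3\ga)(2n+3\ga-2)}$ from the proof of Lemma \ref{lem:propertyp} and rearranging the $n=3$ case as $p_3(\ga) = \frac{2}{3}-\frac{2(3\ga+2)}{3(\ga+2)(3\ga+4)}$, I would write
\[
R(\ga):=\frac{1}{1-\nu_3(\ga)}-p_3(\ga) \;=\; \frac{2}{9}+\frac{2(3\ga+2)}{3(\ga+2)(3\ga+4)}-\frac{4}{9(3\ga+2)}.
\]
A short algebraic check (equivalent to $21\ga^2+16\ga-4\ge 0$) shows that the two correction terms together are nonnegative on $[2,\infty)$, hence $R(\ga)\ge \tfrac{2}{9}$ throughout.

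For $\ga\ge 4$ the crude bound
$
\frac{1}{4\sqrt{3+\ga}}+\frac{1}{4\sqrt{2+\ga}}\le \frac{1}{2\sqrt{2+\ga}}\le \frac{1}{2\sqrt{6}}
$
together with $\sqrt{6}>9/4$ gives LHS${}<\tfrac{2}{9}\le R(\ga)$, settling this case. For $\ga\in[2,4]$ I would argue by monotonicity on both sides: the LHS is manifestly decreasing, and $R$ is also decreasing on $[2,\infty)$; the latter reduces, after differentiation and clearing denominators, to a polynomial inequality of degree four that one verifies on $[2,\infty)$ by checking positivity at $\ga=2$ and positivity of the derivative there. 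Consequently $R(\ga)\ge R(4)=\tfrac{145}{504}$ and LHS${}\le \tfrac{1}{8}+\tfrac{1}{4\sqrt{5}}$, and the remaining inequality $\tfrac{145}{504}>\tfrac{1}{8}+\tfrac{1}{4\sqrt{5}}$ reduces to $\tfrac{41}{252}>\tfrac{1}{4\sqrt{5}}$, i.e.\ $\sqrt{5}>\tfrac{63}{41}$, which is immediate from $(\tfrac{63}{41})^2=\tfrac{3969}{1681}<5$.

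The main obstacle, exactly as in Lemma \ref{lem:4case1}, is that the uniform estimate $R(\ga)\ge \tfrac{2}{9}$ is \emph{not} sufficient near the endpoint: one checks $L(2)=\tfrac{1}{8}+\tfrac{1}{4\sqrt{5}}>\tfrac{2}{9}$, so the correction terms in $R$ cannot simply be discarded on $[2,4]$, and a split of the interval together with the explicit numerical comparison of $R(4)$ and $L(2)$ is unavoidable. Everything else is book-keeping with rational functions of $\ga$.
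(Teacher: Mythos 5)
Your proposal is correct, and its skeleton (explicit rational expression for $R(\ga):=\frac{1}{1-\nu_3(\ga)}-p_3(\ga)$, a split of $[2,\infty)$, and elementary numerical comparisons) is the same as the paper's; the difference is how the lower range is handled. Your algebra checks out: $\nu_3(\ga)=-\frac{\ga+2}{4(2\ga+1)}$ gives $\frac{1}{1-\nu_3(\ga)}=\frac{4(2\ga+1)}{3(3\ga+2)}=\frac{8}{9}-\frac{4}{9(3\ga+2)}$, your rewriting of $p_3$ is exact, and your observation that the two correction terms sum to a nonnegative quantity equivalent to $21\ga^2+16\ga-4\ge 0$ is precisely the paper's expansion $R(\ga)=\frac{2}{9}+\frac{2(-4+16\ga+21\ga^2)}{9(16+44\ga+36\ga^2+9\ga^3)}$. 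The paper splits at $\ga=3$: on $[3,\infty)$ it uses the constant $\frac{2}{9}$ exactly as you do on $[4,\infty)$, and on $[2,3]$ it compares both sides to a second constant, showing $L(\ga)<\frac{1}{4}<R(\ga)$ there via positivity of the cubic $-3\ga^3+44\ga^2+28\ga-16$; you instead prove $R$ is decreasing on $[2,\infty)$ and compare the endpoint values $L(2)=\frac{1}{8}+\frac{1}{4\sqrt{5}}$ and $R(4)=\frac{145}{504}$, which indeed satisfy $L(2)<R(4)$ by your $\sqrt{5}>\frac{63}{41}$ check. Both routes work; the paper's is slightly lighter since it never differentiates $R$, whereas yours costs the quartic positivity check $189\ga^4+288\ga^3-456\ga^2-960\ga-432\ge 0$ on $[2,\infty)$. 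One small caution there: as phrased, ``positivity at $\ga=2$ and positivity of the derivative there'' does not by itself give positivity on all of $[2,\infty)$; you need the derivative to remain positive on the whole half-line, which it does (e.g.\ $756\ga^3+864\ga^2-912\ga-960>0$ for $\ga\ge 2$ by a term-by-term estimate, or because its derivative is positive), so the gap is purely expository and immediately repaired, but it should be stated.
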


\begin{pf}
By the definition,
\begin{eqnarray*}
\frac{1}{1-\nu_3(\ga)}- p_3(\ga) &=&\frac{8+40\ga+38\ga^2+6\ga
^3}{48+132\ga+108\ga^2+27\ga^3}
\\
& =&\frac{2}{9}+\frac{2(-4+16\ga+21\ga^2)}{9(16+44\ga+36\ga
^2+9\ga^3)}.
\end{eqnarray*}
Therefore, for any $\ga\ge3$,
\[
\frac{1}{4\sqrt{3+\ga}}+\frac{1}{4\sqrt{2+\ga}} <\frac{2}{9} <
\frac{1}{1-\nu_3(\ga)}- p_3(\ga).
\]
On the other hand,
\[
\frac{1}{1-\nu_3(\ga)}- p_3(\ga) =\frac{1}{4}+
\frac{-16+28\ga
+44\ga^2-3\ga^3}{12(16+44\ga+36\ga^2+9\ga^3)}
\]
and since $-16+28\ga+44\ga^2-3\ga^3 > 0$ for any $2 \le\ga\le3$,
we have
\[
\frac{1}{4\sqrt{3+\ga}}+\frac{1}{4\sqrt{2+\ga}} <\frac{1}{4} <
\frac{1}{1-\nu_3(\ga)}- p_3(\ga)
\]
for $2 \le\ga\le3$.
\end{pf}

%
\begin{lemma}\label{lem:10}
For any fixed $\frac{2}{3} \le\ga\le2$,
$|q_3 (\ga)|+|q_2(\ga)| < \frac{1}{1-\nu_3 (\ga)}- p_3(\ga)$.
\end{lemma}

\begin{pf}
By Lemmas~\ref{lem:propertynu}, \ref{lem:propertyp2} and~\ref{lem:propertyq2},
\[
\sup_{{2}/{3} \le\ga\le2}\bigl(\bigl|q_3 (\ga)\bigr|+\bigl
|q_2(\ga
)\bigr|\bigr)=|q_3|\biggl(\frac{2}{3}\biggr)+|q_2|
\biggl(\frac{2}{3}\biggr) =\frac{1}{6} \sqrt{\frac
{11}{21}}+
\frac{1}{3} \sqrt{\frac{6}{35}} < \frac{13}{50}
\]
and
\[
\frac{1}{1-\nu_3 (\ga)}- p_3(\ga)= \frac{13}{50} +
\frac
{-224+284\ga+496\ga^2-51\ga^3}{150(16+44\ga+36\ga^2+9\ga^3)}.
\]
Then, since $-224+284\ga+496\ga^2-51\ga^3 >0$ for $\frac{2}{3} \le
\ga\le2$,
\[
\big|q_3(\ga)\big|+\big|q_2(\ga)\big| < \frac{1}{1-\nu_3(\ga)}-p_3(
\ga).
\]
\upqed
\end{pf}

%
\begin{lemma}\label{lem:bb}
Let $0 < \epsilon(\ga) < \frac{2}{1+3\ga}$ and $\b_1(\ga)=\frac
{|q_1(\ga)|3(3\ga+2)}{2-\epsilon(\ga) }$ and $\beta_2(\ga
)=1$. Then
\[
\bigl(1-\nu_1(\ga)\bigr) \biggl( p_1(\ga) +
\frac{|q_1(\ga)|}{ \beta_1(\ga
)} \biggr) <1
\]
and
%
%
\begin{equation}
\label{eq:estimate2} \bigl(1-\nu_2(\ga)\bigr) \biggl( p_2(\ga)
+ \frac{|q_2(\ga)|}{ \beta_2(\ga
)} + \bigl|q_1(\ga)\bigr| \beta_1(\ga) \biggr)
<1
\end{equation}
hold.
\end{lemma}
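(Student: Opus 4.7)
The plan is to handle each of the two inequalities in turn by reducing them, via the definition of $\b_1(\ga)$ and the constraint on $\epsilon(\ga)$, to elementary statements in the single variable $\ga$ (with $\ga \ge 2/3$, as assumed throughout this subsection).

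For the first inequality, I would substitute directly: $\nu_1(\ga) = -1/2$ from $|\nu_n(\ga)| = \prod_{k=0}^{n-1}\frac{\ga+k}{2\ga+k}$, and $p_1(\ga) = \frac{2(1+3\ga)}{3(2+3\ga)}$ from the formula derived in the proof of Lemma~\ref{lem:propertyp}. By the definition of $\b_1(\ga)$, the ratio $|q_1(\ga)|/\b_1(\ga) = \frac{2-\epsilon(\ga)}{3(2+3\ga)}$ identically. Combining, the left-hand side telescopes:
\begin{align*}
(1-\nu_1(\ga))\Bigl(p_1(\ga) + \frac{|q_1(\ga)|}{\b_1(\ga)}\Bigr) = \frac{2(1+3\ga) + 2 - \epsilon(\ga)}{2(2+3\ga)} = 1 - \frac{\epsilon(\ga)}{2(2+3\ga)},
\end{align*}
which is strictly less than $1$ since $\epsilon(\ga) > 0$.

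For the second inequality, the decisive step is to convert the assumption $\epsilon(\ga) < \frac{2}{1+3\ga}$ into a clean $\ga$-only upper bound on $|q_1(\ga)|\b_1(\ga)$. Using $|q_1(\ga)|^2 = \frac{2\ga}{(1+3\ga)(2+3\ga)^2}$, one computes
\begin{align*}
|q_1(\ga)|\b_1(\ga) = \frac{6\ga}{(1+3\ga)(2+3\ga)(2-\epsilon(\ga))},
\end{align*}
and the assumption is equivalent to $2-\epsilon(\ga) > \frac{6\ga}{1+3\ga}$, hence $|q_1(\ga)|\b_1(\ga) < \frac{1}{2+3\ga}$. With this, and recalling $\b_2(\ga) = 1$, it suffices to verify the single-variable inequality
\begin{align*}
(1-\nu_2(\ga))\Bigl(p_2(\ga) + |q_2(\ga)| + \frac{1}{2+3\ga}\Bigr) \le 1 \qquad (\ga \ge 2/3),
\end{align*}
after which the strict inequality \eqref{eq:estimate2} follows from the strict upper bound on $|q_1(\ga)|\b_1(\ga)$.

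The main obstacle is this last reduced inequality, since $|q_2(\ga)|$ carries a square root. My plan is to substitute the explicit forms $1-\nu_2(\ga) = \frac{1+3\ga}{2(1+2\ga)}$, $p_2(\ga) = \frac{4+8\ga+6\ga^2}{(4+3\ga)(2+3\ga)}$ and $|q_2(\ga)|^2 = \frac{(1+3\ga)(2+\ga)}{(4+3\ga)^2(1+\ga)(5+3\ga)}$, isolate the $|q_2(\ga)|$ term, check positivity of the other side, and square to obtain a polynomial inequality in $\ga$. The target inequality is very loose: at $\ga = 2/3$ the left-hand side evaluates to $\tfrac{9}{14}\bigl(\tfrac{3}{4} + \sqrt{2/105}\bigr)$, well below $0.6$, and as $\ga \to \infty$ it tends to $\tfrac{3}{4}\cdot\tfrac{2}{3} = \tfrac{1}{2}$. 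One can therefore close the argument either by expressing the resulting polynomial difference in the shifted variable $s = \ga - 2/3$ and exhibiting non-negative coefficients, or by a routine monotonicity analysis after squaring.
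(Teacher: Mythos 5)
Your proposal is correct and matches the paper's own proof in all essentials: the first inequality is the same exact computation (yielding $1-\frac{\epsilon(\ga)}{2(2+3\ga)}$), and for the second you use precisely the paper's key step of converting the hypothesis $\epsilon(\ga)<\frac{2}{1+3\ga}$ into the bound $|q_1(\ga)|\beta_1(\ga)<\frac{1}{2+3\ga}$, leaving only a routine one-variable estimate. The only cosmetic difference is the endgame: the paper bounds $|q_2(\ga)|$ by $\frac{\sqrt{3}(3+4\ga)}{2(4+3\ga)(2+3\ga)}$ and compares polynomial coefficients (valid for all $\ga>0$), whereas you keep $|q_2(\ga)|$ exact and propose to isolate and square on $\ga\ge\frac{2}{3}$ — a harmless restriction, since the lemma is only invoked in that regime, and your reduced inequality is indeed true (it is weaker than the paper's intermediate bound).
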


\begin{pf}
By the definition,
\begin{eqnarray*}
\bigl(1-\nu_1(\ga)\bigr) \biggl( p_1(\ga) +
\frac{|q_1(\ga)|}{ \beta_1(\ga
)} \biggr) &= &\frac{3}{2} \biggl( \frac{2(3\ga+1)}{3(3\ga+2)}+
\frac
{2-\epsilon(\ga)}{3(3\ga+2) } \biggr)
\\
& =& \frac{(3\ga+1)}{(3\ga+2)} + \frac{2-\epsilon(\ga)}{2(2+3\ga
)} < 1.
\end{eqnarray*}
On the other hand,
\begin{eqnarray*}
1-\nu_2(\ga) & =&\frac{3\ga+1}{2(2\ga+1)},
\\
p_2(\ga) & =& \frac{2(3\ga^2+4\ga+2)}{(4+3\ga)(2+3\ga)}= \frac
{2\ga}{2+3\ga} +
\frac{4}{(4+3\ga)(2+3\ga)}, 
%
\\
\bigl|q_1(\ga)\bigr| \beta_1 (\ga) & =&\frac{|q_1(\ga)|^2 3(3\ga
+2)}{2-\epsilon(\ga)} =
\frac{18\ga(\ga+1)(3\ga+2)}{(3+3\ga
)(2+3\ga)^2 (1+3\ga) (2-\epsilon(\ga)) }
\\
& =& \frac{6\ga}{(2+3\ga) (1+3\ga) (2-\epsilon(\ga))} < \frac
{1}{2+3\ga}
\end{eqnarray*}
and
\[
\bigl|q_2 (\ga) \bigr| = \frac{\sqrt{3(\ga+2)(3\ga+1)}}{\sqrt
{(3+3\ga
)(4+3\ga)^2 (5+3\ga) } } < \frac{\sqrt{3} (3+4\ga)}{2 (4+3\ga
)(2+3\ga)}.
\]
Therefore,
\begin{eqnarray*}
&& \bigl(1-\nu_2(\ga)\bigr) \biggl( p_2(\ga) +
\frac{|q_2 (\ga) |}{ \beta
_2(\ga) } + \bigl|q_{1}(\ga)\bigr| \beta_{1}(\ga) \biggr)
\\
&&\qquad< \frac{3\ga+1}{2(2\ga+1)} \biggl(
\frac{2\ga+1}{2+3\ga} + \frac
{4}{(4+3\ga)(2+3\ga)} + \frac{\sqrt{3} (3+4\ga)}{2 (4+3\ga
)(2+3\ga)} \biggr)
\\
&&\qquad= \frac{3\ga+1}{2(2+3\ga)} + \frac{3\ga+1}{2(2\ga+1)}
\frac{8+
\sqrt{3} (3+4\ga)}{2 (4+3\ga)(2+3\ga)}.
\end{eqnarray*}
Now, to show (\ref{eq:estimate2}), we only need to show that
\[
\frac{(8+ \sqrt{3} (3+4\ga) )(3\ga+1)}{4 (2\ga+1) (4+3\ga)(2+3\ga
)} \le1- \frac{3\ga+1}{2(2+3\ga)}=\frac{3\ga+3}{2(2+3\ga)}
\]
which is equivalent to
\[
(8+ 3 \sqrt{3} +4\sqrt{3} \ga) (3\ga+1) \le2 (3\ga+3) (2\ga+1)
(4+3\ga).
\]
Then, by comparing coefficients of both sides, we complete the proof.
\end{pf}

\begin{pf*}{Proof of Proposition~\ref{prop:b} for $\ga\ge\frac{2}{3}$}
First, assume that $\ga\ge2$. Take $\beta_1(\ga)$ as in Lemma~\ref
{lem:bb} and $\b_n(\ga)=1$ for $n \ge2$. By Lemmas~\ref
{lem:propertynu}, \ref{lem:propertyp}, \ref{lem:sqrtestimate}
and~\ref{lem:4case2},
\begin{eqnarray*}
&&\sup_{n \ge3} \biggl\{ \bigl(1-\nu_n(\ga)\bigr)
\biggl(p_n(\ga)+\frac{|q_n(\ga)|}{\b
_n(\ga)}+\bigl|q_{n-1}(\ga)\bigr|
\b_{n-1}(\ga)\biggr) \biggr\}
\\
&&\qquad= \sup_{n \ge3} \bigl\{ \bigl(1- \nu_n(\ga)
\bigr) \bigl(p_n(\ga)+\bigl|q_n(\ga)\bigr|+\bigl|q_{n-1}(\ga
)\bigr|
\bigr) \bigr\}
\\
&&\qquad\le\sup_{n \ge3} \biggl\{ \bigl(1+\bigl|\nu_n(
\ga)\bigr|\bigr) \biggl(p_n(\ga)+\frac{1}{4\sqrt{n+\ga}}+\frac
{1}{4\sqrt{n-1+\ga}}
\biggr) \biggr\}
\\
&&\qquad= \bigl(1-\nu_3(\ga)\bigr) \biggl(p_3(\ga)+
\frac{1}{4\sqrt{3+\ga}}+\frac
{1}{4\sqrt{2+\ga}}\biggr) <1
\end{eqnarray*}
holds. Therefore, with Lemma~\ref{lem:bb}, we have
\[
\sup_{n \ge1} \biggl\{ \bigl(1-\nu_n(\ga)\bigr)
\biggl(p_n(\ga)+\frac{|q_n(\ga)|}{\b
_n(\ga)}+\bigl|q_{n-1}(\ga)\bigr|
\b_{n-1}(\ga)\biggr) \biggr\} <1.
\]
The same argument works for the case $\frac{2}{3} \le\ga\le2$ with
Lemmas~\ref{lem:propertyq1}, \ref{lem:10} and~\ref{lem:bb}.
\end{pf*}


\subsection{Proof of Propositions \texorpdfstring{\protect\ref{prop:a}}{A.2} and \texorpdfstring{\protect\ref{prop:b}}{A.3} for
\texorpdfstring{$\gamma< \frac{2}{3}$}{$gamma< \frac{2}{3}$}}

Here, we give a proof of Propositions~\ref{prop:a} and~\ref{prop:b}
for the case $\ga< \frac{2}{3}$.

For any $\ga>0 $, $(\frac{1}{1+2|\nu_n(\ga)|}-\frac{1}{2})$ is
positive for any $n \ge2$ and increasing for $n \ge2$. Therefore, by
Lemma~\ref{lem:propertyq1}, $|q_n(\ga)|(\frac{1}{1+2|\nu_n(\ga
)|}-\frac{1}{2})^{-1}$ is decreasing for $n \ge2$ and
\[
\lim_{n \to\infty} \bigl|q_n(\ga)\bigr| \biggl(\frac{1}{1+2
|\nu_n(\ga
)|}-
\frac{1}{2} \biggr)^{-1} =0
\]
holds. Therefore, there exists $n_0=n_0(\ga) \in\N$ satisfying for
any $n \ge n_0$
\[
\bigl|q_n(\ga)\bigr| \biggl(\frac{1}{1+2 |\nu_n(\ga)|}-\frac{1}{2}
\biggr)^{-1} < \frac{1}{2}.
\]

Then it is obvious that
\[
\sup_{n \ge n_0+1} \biggl\{ \bigl(1+2 \nu_n(\ga)\bigr)
\biggl( \frac{1}{2} + \bigl|q_n(\ga)\bigr| + \bigl|q_{n-1}(\ga
)\bigr|
\biggr) \biggr\} < 1
\]
and
\[
\sup_{n \ge n_0+1} \biggl\{ (1-\nu_n) \biggl(
\frac{1}{2} + \bigl|q_n(\ga)\bigr| + \bigl|q_{n-1}(\ga)\bigr|
\biggr)
\biggr\} < 1.
\]



Define $\a_n(\ga)$ as follows:
%
%
\begin{equation}
\label{defan} \a_n(\ga)= %
\cases{ \max\biggl\{
\bigl|q_2(\ga)\bigr|\biggl(\displaystyle\frac{1}{1+2\nu_2(\ga
)}-\frac{1}{2}
\biggr)^{-1}, 1 \biggr\}\vspace*{2pt}
\cr
\qquad\mbox{if } n = 2,\vspace*{2pt}
\cr
\max\biggl\{
2\bigl|q_n(\ga)\bigr|\biggl(\displaystyle\frac{1}{1+2\nu_n(\ga
)}-\frac{1}{2}
\biggr)^{-1}, 1 \biggr\}\vspace*{2pt}
\cr
\qquad\mbox{if } n \ge4 \mbox{ and } n \mbox{
is even},\vspace*{2pt}
\cr
\biggl(\max\biggl\{ 2\bigl|q_n(\ga)\bigr|\biggl(
\displaystyle\frac{1}{1+2\nu_{n+1}(\ga)}-\frac
{1}{2}\biggr)^{-1}, 1 \biggr\}
\biggr)^{-1}\vspace*{2pt}
\cr
\qquad\mbox{if } n \ge3 \mbox{ and } n \mbox{ is odd} }
\end{equation}
and $\a_1=0$. Obviously, $\a_n (\ga) =1$ for any $n \ge n_0= n_0(\ga)$.

%
\begin{lemma}\label{lem:mainforsmall}
For any $n \ge2$,
\[
\bigl(1+2 \nu_n(\ga)\bigr) \biggl( \frac{1}{2} +
\frac{|q_n(\ga)|}{\a_n(\ga
)} + \bigl|q_{n-1}(\ga)\bigr| \a_{n-1}(\ga) \biggr) \le
1.
\]
\end{lemma}

\begin{pf}
In this proof, we denote $\nu_n(\ga)$, $p_n(\ga)$ and $q_n(\ga)$ by
$\nu_n$, $p_n$ and $q_n$ when there is no confusion.

By the definition,
\[
(1+2 \nu_2) \biggl( \frac{1}{2} + \frac{|q_2|}{\a_2} \biggr)
\le(1+2 \nu_2) \biggl( \frac{1}{2} + {\biggl( \frac{1}{1+2\nu
_2}-\frac {1}{2}\biggr) |q_2|}/{|q_2|} \biggr) =1
\]
and for any even number $n \ge4 $,
\begin{eqnarray*}
&&(1+2 \nu_n) \biggl( \frac{1}{2} + \frac{|q_n|}{\a_n} +
|q_{n-1}| \a_{n-1} \biggr)
\\
&&\qquad\le(1+2 \nu_n) \biggl( \frac{1}{2} + {\biggl( \frac
{1}{1+2\nu _n}-\frac{1}{2}\biggr) |q_n|}/{\bigl(2|q_n|\bigr)}
\\
&&\hspace*{82pt}{}+ \biggl(\frac{1}{1+2\nu_n}-\frac
{1}{2}
\biggr) |q_{n-1}|/ {\bigl(2|q_{n-1}|\bigr)} \biggr) =1.
\end{eqnarray*}
Next, for $n=3$,
\begin{eqnarray*}
&& (1+2 \nu_3) \biggl( \frac{1}{2} + \frac{|q_3|}{\a_3} +
|q_2|\a_2 \biggr)
\\
&&\qquad\le(1+2 \nu_3) \biggl( \frac{1}{2} + \max\biggl\{
2|q_3|^2\biggl(\frac
{1}{1+2\nu_{4}}-\frac{1}{2}
\biggr)^{-1}, \frac{1}{2}\biggl(\frac{1}{1+2\nu
_{4}}-
\frac{1}{2}\biggr) \biggr\}
\\
&&\hspace*{102pt}{} + \max\biggl\{ |q_2|^2\biggl(\frac{1}{1+2\nu_{2}}-
\frac{1}{2}\biggr)^{-1}, \biggl(\frac{1}{1+2\nu_{2}}-
\frac{1}{2}\biggr) \biggr\} \biggr)
\end{eqnarray*}
and for any odd number $n \ge5$,
\begin{eqnarray*}
\hspace*{-4pt}&& (1+2 \nu_n) \biggl( \frac{1}{2} + \frac{|q_n|}{\a
_n} +
|q_{n-1}|\a_{n-1} \biggr)
\\
\hspace*{-4pt}&&\qquad\le(1+2 \nu_n) \biggl( \frac{1}{2} + \max
\biggl\{
2|q_n|^2\biggl(\frac
{1}{1+2\nu_{n+1}}-\frac{1}{2}
\biggr)^{-1}, \frac{1}{2}\biggl(\frac{1}{1+2\nu
_{n+1}}-
\frac{1}{2}\biggr) \biggr\}
\\
\hspace*{-4pt}&&\hspace*{81pt}{} + \max\biggl\{ 2|q_{n-1}|^2\biggl(
\frac{1}{1+2\nu_{n-1}}-\frac
{1}{2}\biggr)^{-1}, \frac{1}{2}
\biggl(\frac{1}{1+2\nu_{n-1}}-\frac{1}{2}\biggr) \biggr\} \biggr).
\end{eqnarray*}
To conclude the proof, we will show that
\begin{eqnarray*}
\max\biggl\{ 2|q_3|^2\biggl(\frac{1}{1+2\nu_{4}}-
\frac{1}{2}\biggr)^{-1}, \frac{1}{2}\biggl(
\frac{1}{1+2\nu_{4}}-\frac{1}{2}\biggr) \biggr\} &\le& \frac
{1}{2}
\biggl(\frac{1}{1+2\nu_{3}}-\frac{1}{2}\biggr),
\\
\max\biggl\{ |q_2|^2\biggl(\frac{1}{1+2\nu_{2}}-
\frac{1}{2}\biggr)^{-1}, \biggl(\frac{1}{1+2\nu_{2}}-
\frac{1}{2}\biggr) \biggr\} &\le& \frac{1}{2}\biggl(
\frac
{1}{1+2\nu_{3}}-\frac{1}{2}\biggr)
\end{eqnarray*}
and for any odd number $n \ge5$,
\begin{eqnarray*}
\max\biggl\{ 2|q_n|^2\biggl(\frac{1}{1+2\nu_{n+1}}-
\frac{1}{2}\biggr)^{-1}, \frac{1}{2}\biggl(
\frac{1}{1+2\nu_{n+1}}-\frac{1}{2}\biggr) \biggr\} &\le& \frac
{1}{2}
\biggl(\frac{1}{1+2\nu_{n}}-\frac{1}{2}\biggr),
\\
\max\biggl\{ 2|q_{n-1}|^2\biggl(\frac{1}{1+2\nu_{n-1}}-
\frac
{1}{2}\biggr)^{-1}, \frac{1}{2} \biggl(
\frac{1}{1+2\nu_{n-1}}-\frac{1}{2}\biggr) \biggr\} &\le& \frac{1}{2}
\biggl(\frac{1}{1+2\nu_{n}}-\frac{1}{2}\biggr).
\end{eqnarray*}

Note that for any odd number $n \ge3$, $ \frac{1}{1+2\nu_{n}}-\frac
{1}{2} > \frac{1}{2}$ and even number $n \ge4$, $ \frac{1}{1+2\nu
_{n}}-\frac{1}{2} < \frac{1}{2}$ and $ \frac{1}{1+2\nu_{2}}-\frac
{1}{2} =\frac{3\ga}{6\ga+4} < \frac{1}{4}$. Namely, we only need to
show that
\begin{eqnarray*}
4|q_3|^2\biggl(\frac{1}{1+2\nu_{4}}-\frac{1}{2}
\biggr)^{-1} &\le&\biggl(\frac
{1}{1+2\nu_{3}}-\frac{1}{2}\biggr),
\\
2
|q_2|^2\biggl(\frac{1}{1+2\nu_{2}}-\frac
{1}{2}
\biggr)^{-1} &\le&\biggl(\frac{1}{1+2\nu_{3}}-\frac{1}{2}\biggr),
\\
4|q_n|^2\biggl(\frac{1}{1+2\nu_{n+1}}-\frac{1}{2}
\biggr)^{-1} &\le&\biggl(\frac
{1}{1+2\nu_{n}}-\frac{1}{2}\biggr),
\\
4|q_{n-1}|^2\biggl(\frac{1}{1+2\nu_{n-1}}-\frac{1}{2}
\biggr)^{-1} &\le&\biggl(\frac
{1}{1+2\nu_{n}}-\frac{1}{2}\biggr).
\end{eqnarray*}
We can rewrite these inequalities as
\begin{eqnarray*}
16|q_3|^2 \frac{1+2|\nu_{4}|}{1-2|\nu_{4}|} &\le&\frac{1+2|\nu
_{3}|}{1-2|\nu_{3}|},
\qquad8 |q_2|^2\frac{1+2|\nu_{2}|}{1-2|\nu_{2}|} \le
\frac{1+2|\nu_{3}|}{1-2|\nu_{3}|},
\\
16 |q_n|^2\frac{1+2|\nu_{n+1}|}{1-2|\nu_{n+1}|} &\le&\frac{1+2|\nu
_{n}|}{1-2|\nu_{n}|},
\qquad16|q_{n-1}|^2\frac{1+2|\nu_{n-1}|}{1-2|\nu
_{n-1}|} \le
\frac{1+2|\nu_{n}|}{1-2|\nu_{n}|}.
\end{eqnarray*}

Combing the fact that $|q_n|^2$ is decreasing in $n \ge2$ and $|\nu
_n|$ is also decreasing in~$n$, we only need to prove that
\[
16|q_3|^2 \le1,\qquad8 |q_2|^2
\frac{1+2|\nu_{2}|}{1-2|\nu_{2}|} \le\frac{1+2|\nu_{3}|}{1-2|\nu_{3}|}
\]
and for any odd number $n \ge5$,
\[
16|q_4|^2\frac{1+2|\nu_{n-1}|}{1-2|\nu_{n-1}|} \le\frac{1+2|\nu
_{n}|}{1-2|\nu_{n}|}.
\]
Since $|q_3(\ga)|^2 < |q_3(0)|^2=\frac{2}{105}$, the first inequality
holds for all $\ga>0$. The second inequality is rewritten as
\[
\bigl|q_2(\ga)\bigr|^2 =\frac{(3\ga+1)(3\ga+6)}{(3\ga+3)(3\ga
+4)^2(3\ga
+5)} \le\frac{5\ga+4}{24(3\ga+2)}
\]
and since the coefficients of the polynomial
\[
(5\ga+4) (3\ga+3) (3\ga+4)^2(3\ga+5)-24(3\ga+2) (3\ga+1) (3\ga+6)
\]
are all positive, it is satisfied for any $\ga> 0$.

Finally, by Lemma~\ref{lem:n3} below, to show the last inequality we
only need to show that
\[
16 \bigl|q_4(\ga)\bigr |^2 \le\frac{1+2|\nu_{3}(\ga) |}{1-2|\nu
_{3}(\ga)
|}
\frac{1-2|\nu_{2}(\ga) |}{1+ 2|\nu_{2}(\ga) |}
\]
and it follows from the fact
\begin{eqnarray*}
16 \bigl|q_4(\ga) \bigr|^2 &\le&16\bigl|q_4(0)\bigr|^2=
\frac{5}{21} < \frac{1}{3} < \frac{5\ga+4}{3(3\ga+2)}
\\
&=&
\frac{1+2|\nu_{3}(\ga) |}{1-2|\nu
_{3}(\ga) |}\frac{1-2|\nu_{2}(\ga) |}{1+ 2|\nu_{2}(\ga) |}.
\end{eqnarray*}
\upqed
\end{pf}

%
\begin{lemma}\label{lem:n3}
For any $n \ge2$,
\[
\frac{1+2|\nu_{n+1}(\ga) |}{1-2|\nu_{n+1}(\ga) |}\frac{1-2|\nu
_{n}(\ga) |}{1+ 2|\nu_{n}(\ga) |} \ge\frac{1+2|\nu_{3}(\ga)
|}{1-2|\nu_{3}(\ga) |}
\frac{1-2|\nu_{2}(\ga) |}{1+ 2|\nu_{2}(\ga
) |}.
\]
\end{lemma}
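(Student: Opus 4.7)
The plan is to rewrite the quantity as a function of two sequences and exploit separate monotonicity in each argument. Set $r_n := |\nu_n(\ga)|$ and
\[
\theta_n := \frac{|\nu_{n+1}(\ga)|}{|\nu_n(\ga)|} = \frac{n+\ga}{n+2\ga} \in (0,1),
\]
and define
\[
\phi(x,\theta) := \frac{(1+2\theta x)(1-2x)}{(1-2\theta x)(1+2x)}, \qquad x\in(0,\tfrac12),\ \theta\in(0,1).
\]
Then the left-hand side of Lemma \ref{lem:n3} equals $g(n) := \phi(r_n,\theta_n)$, so the inequality reduces to $g(n)\ge g(2)$ for $n\ge 2$, and it is enough to show that $g$ is non-decreasing in $n$.

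The key is two partial monotonicity facts for $\phi$. A direct logarithmic differentiation gives
\[
\partial_\theta \log\phi \;=\; \frac{2x}{1+2\theta x}+\frac{2x}{1-2\theta x} \;=\; \frac{4x}{1-4\theta^2 x^2} \;>\;0,
\]
so $\phi$ is strictly increasing in $\theta$. Likewise
\[
\partial_x \log\phi \;=\; \frac{4\theta}{1-4\theta^2 x^2}-\frac{4}{1-4x^2},
\]
and the inequality $\theta(1-4x^2) < 1-4\theta^2 x^2$ is equivalent to $\theta-1 < 4\theta x^2(1-\theta)$, i.e.\ to $-1 < 4\theta x^2$, which is trivially true whenever $\theta<1$. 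Hence $\partial_x\log\phi < 0$, i.e.\ $\phi$ is strictly decreasing in $x$.

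Now I assemble the pieces. By Lemma \ref{lem:propertynu}, $r_n$ is decreasing in $n$, and $r_n\le r_2 = (\ga+1)/(2(2\ga+1)) < 1/2$ for $n\ge 2$, so we stay inside the region where $\phi$ is defined. A direct calculation, $\frac{d}{dn}\frac{n+\ga}{n+2\ga} = \frac{\ga}{(n+2\ga)^2}>0$, shows that $\theta_n$ is increasing in $n$. Combining the two monotonicity facts for $\phi$ with these sequence monotonicities,
\[
g(n+1) \;=\; \phi(r_{n+1},\theta_{n+1}) \;\ge\; \phi(r_n,\theta_{n+1}) \;\ge\; \phi(r_n,\theta_n) \;=\; g(n),
\]
for every $n\ge 2$, which yields $g(n)\ge g(2)$ as required.

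I do not foresee a serious obstacle. The only non-trivial step is verifying that the cross term in $\partial_x\log\phi$ has the right sign, but as shown above this collapses to the inequality $-1<4\theta x^2$ once one clears denominators and uses $\theta<1$, so the argument is essentially a few lines of calculus once the reformulation $g(n)=\phi(r_n,\theta_n)$ is made.
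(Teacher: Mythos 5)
Your proof is correct and follows essentially the same route as the paper: the paper also rewrites the left-hand side as a two-variable function $f(x,a)=\frac{(1+ax)(1-a)}{(1-ax)(1+a)}$ evaluated at $x=\frac{n+\ga}{n+2\ga}$, $a=2|\nu_n(\ga)|$, checks that it is increasing in the ratio variable and decreasing in $|\nu_n|$, and concludes via the monotonicity of the two sequences. Your version merely makes the partial-derivative signs explicit by logarithmic differentiation and passes through $g(n+1)\ge g(n)$ instead of comparing directly with $n=2$, which is an equivalent use of the same idea.
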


\begin{pf}
Consider a function $f(x,a)=\frac{(1+ax)(1-a)}{(1-ax)(1+a)}$ for
$0<a<1$ and $0<\allowbreak x<1$. Then it is easy to see that $\partial
_x f (x,a) >
0 $ and $\partial_a f (x,a) < 0 $ for all $0<x<1$ and $0<a<1$.
Therefore, for any $n \ge2$,
\begin{eqnarray*}
\frac{1+2|\nu_{n+1}(\ga) |}{1-2|\nu_{n+1}(\ga) |}\frac{1-2|\nu
_{n}(\ga) |}{1+ 2|\nu_{n}(\ga) |} &=& f\biggl(\frac{n+\ga}{n+2\ga
}, 2\bigl|\nu
_{n}(\ga)\bigr| \biggr)
\\
& \ge& f\biggl(\frac{2+\ga}{2+2\ga},2\bigl|\nu_n(\ga)\bigr
|\biggr) \ge f
\biggl(\frac{2+\ga
}{2+2\ga},2\bigl|\nu_2(\ga)\bigr|\biggr)
\\
& =& \frac{1+2|\nu_{3}(\ga) |}{1-2|\nu
_{3}(\ga) |}
\frac{1-2|\nu_{2}(\ga) |}{1+ 2|\nu_{2}(\ga) |}.
\end{eqnarray*}
\upqed
\end{pf}

\begin{pf*}{Proof of Proposition~\ref{prop:a} for $\ga< \frac{2}{3}$}
Define $\{\a_n(\ga)\}$ as (\ref{defan}). Then, for sufficiently
large $n \in\N$, $\a_n(\ga)=1$. Therefore,
\begin{eqnarray*}
&&\limsup_{n \to\infty} \biggl( \bigl(1+2 \nu_n (\ga)
\bigr) \biggl( p_n + \frac{|q_n (\ga)|}{\a_n(\ga)} + \bigl
|q_{n-1}(\ga)\bigr|
\a_{n-1}(\ga) \biggr) \biggr)
\\
&&\qquad\le\limsup_{n \to\infty} \biggl( \bigl(1+2
\nu_n(\ga)\bigr) \biggl( \frac
{1}{2} + \bigl|q_n(\ga)\bigr|
+ \bigl|q_{n-1}(\ga)\bigr| \biggr) \biggr) =\frac{1}{2} < 1
\end{eqnarray*}
holds. Then, to show Proposition~\ref{prop:a}, we only need to show
that with this $\a_n(\ga)$,
\[
\bigl(1+2 \nu_n(\ga)\bigr) \biggl( p_n(\ga) +
\frac{|q_n(\ga)|}{\a_n(\ga)} + \bigl|q_{n-1}(\ga)\bigr| \a
_{n-1}(\ga) \biggr) <1
\]
for all $n \ge2$. Then, since for any $\ga< \frac{2}{3}$ and $n \ge
2$, $p_n(\ga) < \frac{1}{2}$, the proof is complete with Lemma~\ref
{lem:mainforsmall}.
\end{pf*}


Define $\b_n$ as follows:
%
%
\begin{equation}
\label{defbn} \b_n= %
\cases{ \max\biggl\{
\bigl|q_1(\ga)\bigr|\biggl(\displaystyle\frac{1}{1-\nu_1(\ga
)}-\frac{1}{2}
\biggr)^{-1}, 1 \biggr\}\vspace*{2pt}
\cr
\qquad\mbox{if } n = 1,\vspace*{2pt}
\cr
\max\biggl\{
2\bigl|q_n(\ga)\bigr|\biggl(\displaystyle\frac{1}{1-\nu_n(\ga
)}-\frac{1}{2}
\biggr)^{-1}, 1 \biggr\}\vspace*{2pt}
\cr
\qquad\mbox{if } n \ge3 \mbox{ and } n \mbox{
is odd},\vspace*{2pt}
\cr
\biggl(\max\biggl\{ 2\bigl|q_n(\ga)\bigr|\biggl(
\displaystyle\frac{1}{1-\nu_{n+1}(\ga)}-\frac
{1}{2}\biggr)^{-1}, 1 \biggr\}
\biggr)^{-1}\vspace*{2pt}
\cr
\qquad\mbox{if } n \ge2 \mbox{ and } n \mbox{ is even}
} %
\end{equation}
and $\b_0=0$. Obviously, $\b_n=1$ for any $n \ge n_0$.

%
\begin{lemma}\label{lem:mainforsmallb}
For any $n \ge1$
\[
\bigl(1- \nu_n(\ga)\bigr) \biggl( \frac{1}{2} +
\frac{|q_n(\ga)|}{\b_n(\ga
)} + \bigl|q_{n-1}(\ga)\bigr| \b_{n-1}(\ga) \biggr) \le
1.
\]
\end{lemma}

\begin{pf}
By the definition,
\[
\bigl(1- \nu_1(\ga)\bigr) \biggl( \frac{1}{2} +
\frac{|q_1(\ga)|}{\b_1(\ga
)} \biggr) \le1
\]
and for any odd number $n \ge3$,
\begin{eqnarray*}
&& \bigl(1- \nu_n(\ga)\bigr) \biggl( \frac{1}{2} +
\frac{|q_n (\ga)|}{\b
_n(\ga)} + \bigl|q_{n-1}(\ga)\bigr| \b_{n-1}(\ga) \biggr)
\\
&&\qquad\le\bigl(1- \nu_n(\ga) \bigr) \biggl( \frac{1}{2} +
{\biggl(\frac{1}{1-\nu _n(\ga)}-\frac{1}{2}\biggr) \bigl|q_n (
\ga)\bigr|}/{\bigl(2\bigl|q_n (\ga)\bigr|\bigr)}
\\
&&\hspace*{90pt}{}+ \biggl(\frac
{1}{1-\nu_n(\ga)}-
\frac{1}{2}\biggr) \bigl|q_{n-1}(\ga)\bigr|/ {\bigl(2\bigl
|q_{n-1}(\ga)\bigr|
\bigr)} \biggr) =1.
\end{eqnarray*}
Next, for $n=2$,
\begin{eqnarray*}
&& \bigl(1- \nu_2(\ga)\bigr) \biggl( \frac{1}{2} +
\frac{|q_2(\ga)|}{\b_2(\ga
)} + \bigl|q_1(\ga)\bigr|\b_1(\ga) \biggr)
\\
&&\qquad\le\bigl(1- \nu_2(\ga)\bigr) \biggl( \frac{1}{2} +
\max\biggl\{ 2\bigl|q_2(\ga)\bigr|^2\biggl(\frac{1}{1-\nu
_{3}(\ga)}-
\frac{1}{2}\biggr)^{-1},
\\
&&\hspace*{186pt}\frac{1}{2}\biggl(
\frac
{1}{1-\nu_{3}(\ga)}-\frac{1}{2}\biggr) \biggr\}
\\
&&\hspace*{90pt}{} + \max\biggl\{ \bigl|q_1(\ga)\bigr|^2\biggl(
\frac{1}{1-\nu_{1}(\ga)}-\frac
{1}{2}\biggr)^{-1}, \biggl(
\frac{1}{1-\nu_{1}(\ga)}-\frac{1}{2}\biggr) \biggr\} \biggr)
\end{eqnarray*}
and for any even number $n \ge4$,
\begin{eqnarray*}
&& \bigl(1- \nu_n(\ga)\bigr) \biggl( \frac{1}{2} +
\frac{|q_n(\ga)|}{\b_n(\ga
)} + \bigl|q_{n-1}(\ga)\bigr|\b_{n-1}(\ga) \biggr)
\\
&&\qquad\le\bigl(1- \nu_n(\ga)\bigr) \biggl( \frac{1}{2} +
\max\biggl\{ 2\bigl|q_n(\ga)\bigr|^2\biggl(\frac{1}{1-\nu
_{n+1}(\ga)}-
\frac{1}{2}\biggr)^{-1},
\\
&&\hspace*{185pt}\frac
{1}{2}\biggl(
\frac{1}{1-\nu_{n+1}(\ga)}-\frac{1}{2}\biggr) \biggr\}
\\
&&\hspace*{92pt}{} + \max\biggl\{ 2\bigl|q_{n-1}(\ga)\bigr
|^2\biggl(
\frac{1}{1-\nu_{n-1}(\ga)}-\frac
{1}{2}\biggr)^{-1},
\\
&&\hspace*{183pt}\frac{1}{2}
\biggl(\frac{1}{1+2\nu_{n-1}(\ga)}-\frac
{1}{2}\biggr) \biggr\} \biggr).
\end{eqnarray*}
To complete the proof, we will show that
\begin{eqnarray*}
&&\max\biggl\{ 2\bigl|q_2(\ga)\bigr|^2\biggl(\frac{1}{1-\nu
_{3}(\ga)}-
\frac
{1}{2}\biggr)^{-1} , \frac{1}{2}\biggl(
\frac{1}{1-\nu_{3}(\ga)}-\frac{1}{2}\biggr) \biggr\}
\\
&&\qquad\le\frac{1}{2}
\biggl(\frac{1}{1-\nu_{2}(\ga)}-\frac{1}{2}\biggr),
\\
&&\max\biggl\{ \bigl|q_1(\ga)\bigr|^2\biggl(\frac{1}{1-\nu
_{1}(\ga)}-
\frac
{1}{2}\biggr)^{-1}, \biggl(\frac{1}{1-\nu_{1}(\ga)}-
\frac{1}{2}\biggr) \biggr\} \le \frac{1}{2}\biggl(
\frac{1}{1-\nu_{2}(\ga)}-\frac{1}{2}\biggr)
\end{eqnarray*}
and for any even number $n \ge4$,
\begin{eqnarray*}
&& \max\biggl\{ 2\bigl|q_n(\ga)\bigr|^2\biggl(
\frac{1}{1-\nu_{n+1}(\ga)}-\frac
{1}{2}\biggr)^{-1}, \frac{1}{2}
\biggl(\frac{1}{1-\nu_{n+1}(\ga)}-\frac{1}{2}\biggr) \biggr\}
\\
&&\qquad\le\frac{1}{2}\biggl(\frac{1}{1-\nu_{n}(\ga)}-\frac{1}{2}
\biggr),
\\
&& \max\biggl\{ 2\bigl|q_{n-1}(\ga)\bigr|^2\biggl(
\frac{1}{1-\nu_{n-1}(\ga)}-\frac
{1}{2}\biggr)^{-1}, \frac{1}{2}
\biggl(\frac{1}{1+2\nu_{n-1}(\ga)}-\frac
{1}{2}\biggr) \biggr\}
\\
&&\qquad\le\frac{1}{2}\biggl(\frac{1}{1-\nu_{n}(\ga)}-\frac{1}{2}
\biggr).
\end{eqnarray*}

Note that for any even number $n \ge2$, $ \frac{1}{1-\nu_{n}(\ga
)}-\frac{1}{2} > \frac{1}{2}$ and odd number $n \ge3$, $ \frac
{1}{1-\nu_{n}(\ga)}-\frac{1}{2} < \frac{1}{2}$ and $ \frac
{1}{1-\nu_{1}(\ga)}-\frac{1}{2} =\frac{1}{6} < \frac{1}{4}$.
Namely, we only need to show that
\begin{eqnarray*}
4\bigl|q_2(\ga)\bigr|^2\biggl(\frac{1}{1-\nu_{3}(\ga)}-
\frac{1}{2}\biggr)^{-1} &\le&\biggl(\frac
{1}{1-\nu_{2}(\ga)}-
\frac{1}{2}\biggr),
\\
2 \bigl|q_1(\ga)\bigr|^2\biggl(\frac{1}{1-\nu_{1}(\ga)}-
\frac{1}{2}\biggr)^{-1} &\le& \biggl(\frac{1}{1-\nu_{2}(\ga)}-
\frac{1}{2}\biggr),
\\
4\bigl|q_n(\ga)\bigr|^2\biggl(\frac{1}{1-\nu_{n+1}(\ga)}-
\frac{1}{2}\biggr)^{-1} &\le& \biggl(\frac{1}{1-\nu_{n}(\ga)}-
\frac{1}{2}\biggr),
\\
4\bigl|q_{n-1}(\ga)\bigr|^2\biggl(\frac{1}{1-\nu_{n-1}(\ga)}-
\frac{1}{2}\biggr)^{-1} &\le&\biggl(\frac{1}{1-\nu_{n}(\ga)}-
\frac{1}{2}\biggr)
\end{eqnarray*}
for any even number $n \ge4$.
We can rewrite these inequalities as
\begin{eqnarray*}
16\bigl|q_2(\ga)\bigr|^2 \frac{1+|\nu_{3}(\ga)|}{1-|\nu_{3}(\ga
)|} &\le&
\frac{1+|\nu_{2}(\ga)|}{1-|\nu_{2}(\ga)|},
\\
8 \bigl|q_1(\ga)\bigr|^2
\frac
{1+|\nu_{1}(\ga)|}{1-|\nu_{1}(\ga)|} &\le&\frac{1+|\nu_{2}(\ga
)|}{1-|\nu_{2}(\ga)|},
\\
16 \bigl|q_n(\ga)\bigr|^2\frac{1+|\nu_{n+1}(\ga)|}{1-|\nu
_{n+1}(\ga)|} &\le&
\frac{1+|\nu_{n}(\ga)|}{1-|\nu_{n}(\ga)|},
\\
16\bigl|q_{n-1}(\ga)\bigr|^2
\frac{1+|\nu_{n-1}(\ga)|}{1-|\nu_{n-1}(\ga)|} &\le&\frac
{1+|\nu_{n}(\ga)|}{1-|\nu_{n}(\ga)|}.
\end{eqnarray*}

Combing the fact that $|q_n(\ga)|^2$ is decreasing in $n \ge2$ and
$|\nu_n(\ga)|$ is also decreasing in $n$, we only need to prove that
\[
16\bigl|q_2(\ga)\bigr|^2 \le1,\qquad8 \bigl|q_1(
\ga)\bigr|^2\frac{1+|\nu_{1}(\ga)|}{1-|\nu
_{1}(\ga)|} \le\frac{1+|\nu_{2}(\ga)|}{1-|\nu_{2}(\ga)|}
\]
and for any even number $n \ge4$,
\[
16\bigl|q_3(\ga)\bigr|^2\frac{1+|\nu_{n-1}(\ga)|}{1-|\nu
_{n-1}(\ga)|} \le
\frac{1+|\nu_{n}(\ga)|}{1-|\nu_{n}(\ga)|}.
\]
Since $|q_2(\ga)|^2 = \frac{(3\ga+1)(2+\ga)}{(3\ga+5)(3\ga
+4)^2(\ga+1)} < \frac{(3\ga+1)(2+\ga)}{16(3\ga+5)(\ga+1)}=\frac
{3\ga^2+7\ga+2}{16(3\ga^2+8\ga+5)}$, the first inequality holds.
The second inequality is rewritten as
\[
\bigl|q_1(\ga)\bigr|^2 =\frac{6\ga(\ga+1)}{(3\ga+1)(3\ga
+2)^2(3\ga+3)} = \frac{2\ga}{(3\ga+1)(3\ga+2)^2}
< \frac{5\ga+3}{24(3\ga+1)}
\]
and since the coefficients of the polynomial\vspace*{1pt}
\[
(5\ga+3) (3\ga+2)^2-48\ga
\]
are all positive, it is satisfied for any $\ga> 0$.

Finally, by the same argument of the proof of Lemma~\ref{lem:n3}, to
prove the last inequality we only need to show that\vspace*{1pt}
\[
16 \bigl|q_3(\ga) \bigr|^2 \le\frac{1+|\nu_{2}(\ga)|}{1-|\nu
_{2}(\ga
)|}
\frac{1-|\nu_{1}(\ga)|}{1+ |\nu_{1}(\ga)|}
\]
and it follows from the fact
\begin{eqnarray*}
16 \bigl|q_3(\ga) \bigr|^2 &\le&16\bigl|q_3(0)\bigr|^2=
\frac{32}{105} < \frac{1}{3} < \frac{5\ga+3}{3(3\ga+1)}
\\
&=&
\frac{1+|\nu_{2}(\ga)|}{1-|\nu
_{2}(\ga)|}\frac{1-|\nu_{1}(\ga)|}{1+ |\nu_{1}(\ga)|}.
\end{eqnarray*}
\upqed
\end{pf}

\begin{pf*}{Proof of Proposition~\ref{prop:b} for $\ga< \frac{2}{3}$}
Define $\{\beta_n(\ga)\}$ as (\ref{defbn}). Then, for sufficiently
large $n \in\N$, $\beta_n(\ga)=1$. Therefore,
\begin{eqnarray*}
&&\limsup_{n \to\infty} \biggl( \bigl(1- \nu_n(\ga)\bigr)
\biggl( p_n (\ga) + \frac{|q_n(\ga)|}{\beta_n(\ga)} + \bigl
|q_{n-1}(\ga)\bigr|
\b_{n-1}(\ga) \biggr) \biggr)
\\
&&\qquad\le\limsup_{n \to\infty} \biggl( \bigl(1-
\nu_n(\ga)\bigr) \biggl( \frac
{1}{2} + \bigl|q_n(\ga)\bigr|
+ \bigl|q_{n-1}(\ga)\bigr| \biggr) \biggr) =\frac{1}{2} < 1
\end{eqnarray*}
holds. Then, to show Proposition~\ref{prop:b}, we only need to show
that with this $\beta_n(\ga)$,
\[
\bigl(1- \nu_n(\ga)\bigr) \biggl( p_n(\ga) +
\frac{|q_n(\ga)|}{\b_n(\ga)} + \bigl|q_{n-1}(\ga)\bigr| \b
_{n-1}(\ga) \biggr) <1
\]
for all $n \ge1$. Then, since for any $\ga< \frac{2}{3}$ and $n \ge
1$, $p_n(\ga) < \frac{1}{2}$, the proof is complete with Lemma~\ref
{lem:mainforsmallb}.
\end{pf*}
\end{appendix}

\section*{Acknowledgments}

The author expresses her sincere thanks to Professor Herbert Spohn for
posing the question which motivated this work. She also thanks
Professor Takeshi Katsura for helpful discussions.


%

\printaddresses

\end{document}